\def\input@path{{figures/}}\makeatother
\newtheorem{theorem}{Theorem}%[section]
\newtheorem{corollary}[theorem]{Corollary}
\newtheorem{proposition}[theorem]{Proposition}
\newtheorem{lemma}[theorem]{Lemma}
\newtheorem*{theorem*}{Theorem}%[section]
\theoremstyle{definition}
\newtheorem{example}[theorem]{Example}
\newtheorem{remark}[theorem]{Remark}
\newcommand{\R}{\mathbb{R}} % reals
\newcommand{\fS}{\mathfrak{S}} % symmetric group
\newcommand{\cI}{\mathcal{I}} % rays
\renewcommand{\b}[1]{{\boldsymbol{#1}}} % bold letters
\newcommand{\set}[2]{\left\{ #1 \;\middle|\; #2 \right\}} % set notation
\newcommand{\bigset}[2]{\big\{ #1 \;\big|\; #2 \big\}} % big set notation
\newcommand{\Bigset}[2]{\Big\{ #1 \;\Big|\; #2 \Big\}} % Big set notation
\newcommand{\ssm}{\smallsetminus} % small set minus
\newcommand{\dotprod}[2]{\left\langle \, #1 \; \middle| \; #2 \, \right\rangle} % dot product
\newcommand{\one}{\b{1}} % the all one vector
\newcommand{\eqdef}{\mbox{\,\raisebox{0.2ex}{\scriptsize\ensuremath{\mathrm:}}\ensuremath{=}\,}} % :=
\newcommand{\defeq}{\mbox{~\ensuremath{=}\raisebox{0.2ex}{\scriptsize\ensuremath{\mathrm:}} }} % =:
\DeclareMathOperator{\inv}{inv} % inversions
\newcommand{\fref}[1]{Figure~\ref{#1}} % reference figures
\newcommand{\ie}{\textit{i.e.}~} % id est
\newcommand{\eg}{\textit{e.g.}~} % exempli gratia
\newcommand{\viceversa}{\textit{vice versa}} % vice versa
\definecolor{darkblue}{rgb}{0,0,0.7} % darkblue color
\definecolor{green}{RGB}{57,181,74} % darkblue color
\definecolor{violet}{RGB}{147,39,143} % darkblue color
\newcommand{\darkblue}{\color{darkblue}} % darkblue command
\newcommand{\defn}[1]{\textsl{\darkblue #1}} % emphasis of a definition
\newcommand{\meet}{\wedge} % meet
\newcommand{\join}{\vee} % join
\newcommand{\shard}{\Sigma}
\newcommand{\shards}{\boldsymbol{\Sigma}}
\newcommand{\decoration}{\delta} % decoration
\newcommand{\includeSymbol}[1]{\ensuremath{%
	\mathchoice
		{\raisebox{-.7mm}{\includegraphics[height=2.2ex]{#1}}}	
		{\raisebox{-.7mm}{\includegraphics[height=2.2ex]{#1}}}
		{\raisebox{-.6mm}{\includegraphics[height=1.6ex]{#1}}}
		{\raisebox{-.5mm}{\includegraphics[height=1ex]{#1}}}
}}
\newcommand{\noneCirc}{\includeSymbol{none}}
\newcommand{\upCirc}{\includeSymbol{up}}
\newcommand{\downCirc}{\includeSymbol{down}}
\newcommand{\upDownCirc}{\includeSymbol{updown}}
\newcommand{\Decorations}{\{\noneCirc{}, \downCirc{}, \upCirc{}, \upDownCirc{}\}} % all decorations
\newcommand{\edgecut}[2]{\left( #1 \;\middle\|\; #2 \right)} % edge cut
\newcommand{\cle}{\ensuremath{\preccurlyeq}} % inferieur ou egal curviligne
\newcommandx{\ray}[1][1=r]{\boldsymbol{#1}} % ray
\newcommandx{\rays}[1][1=R]{\boldsymbol{#1}} % rays
\newcommand{\hyp}{\mathbb{H}} % hyperplane
\newcommand{\Hyp}{\mathbf{H}} % hyperplane
\newcommand{\HA}{\mathcal{H}} % hyperplane arrangement
\newcommandx{\Perm}[1][1=n]{\mathsf{Perm}_{#1}} % permutahedron
\newcommandx{\Asso}[1][1=n]{\mathsf{Asso}_{#1}} % associahedron
\newcommandx{\Zono}[1][1=n]{\mathsf{Zono}_{#1}} % zonotope
\newcommandx{\Fan}[1][1=F]{\mathcal{#1}} % fan
\newcommandx{\Permutreehedron}[1][1=\decoration]{\mathsf{PT}_{#1}} % permutreehedron
\newcommandx{\Defo}[1][1=h]{\mathsf{Defo}_{#1}} % deformed permutahedron
\newcommandx{\Remo}[1][1=\cI]{\mathsf{Remo}_{#1}} % removahedron
\newcommand{\typeCone}{\mathbb{TC}} % type cone
\newcommand{\ctypeCone}{\smash{\overline{\mathbb{TC}}}} % type cone
\newcommandx{\coefficient}[3][1={\ray[t]}, 2=\ray, 3={\ray[s]}]{\alpha_{#2,#3}(#1)} % coefficient in linear dependence
\def\l@section{\@tocline{1}{1pt}{0pc}{}{}}
\let\oldtocsection=\tocsection
\renewcommand{\tocsection}[2]{\bf\oldtocsection{#1}{#2}}
\let\oldtocsubsubsection=\tocsubsubsection
\renewcommand{\tocsubsubsection}[2]{\quad\oldtocsubsubsection{#1}{#2}}
\title{\mbox{Removahedral congruences versus permutree congruences}}
\thanks{VP \& JR were supported by the French ANR (grants CAPPS~17\,CE40\,0018 and CHARMS~19\,CE40\,0017)}
\author{Doriann Albertin}
\address[DA]{LIGM, Université Gustave Eiffel, Champs-sur-Marne}
\email{doriann.albertin@u-pem.fr}
\author{Vincent Pilaud}
\address[VP]{CNRS \& LIX, \'Ecole Polytechnique, Palaiseau}
\email{vincent.pilaud@lix.polytechnique.fr}
\urladdr{\url{http://www.lix.polytechnique.fr/~pilaud/}}
\author{Julian Ritter}
\address[JR]{LIX, \'Ecole Polytechnique, Palaiseau}
\email{julian.ritter@polytechnique.edu}
\urladdr{\url{www.nailuj.de}}
\begin{document}

\begin{abstract}
The associahedron is classically constructed as a removahedron, \ie by deleting inequalities in the facet description of the permutahedron. This removahedral construction extends to all permutreehedra (which interpolate between the permutahedron, the associahedron and the cube). Here, we investigate removahedra constructions for all quotientopes (which realize the lattice quotients of the weak order). On the one hand, we observe that the permutree fans are the only quotient fans realized by a removahedron. On the other hand, we show that any permutree fan can be realized by a removahedron constructed from any realization of the braid fan. Our results finally lead to a complete description of the type cones of the permutree fans.

\medskip
\noindent
\textsc{msc classes.} 52B11, 52B12, 03G10, 06B10
\end{abstract}

\vspace*{-.6cm}
\maketitle

%\vspace{-.5cm}
%\tableofcontents
%\vspace{-.5cm}

%%%%%%%%%%%%%%%%%%%%%%%%%%%%%%%%%%%%%%

\section{Introduction}

This paper deals with particular polytopal realizations of quotient fans of lattice congruences of the weak order.
The prototypes of such polytopes are the classical permutahedron~$\Perm$ realizing the weak order on permutations and the classical associahedron~$\Asso$ realizing the Tamari lattice on binary trees.
These two polytopes belong to the family of permutreehedra realizing the rotation lattice on permutrees, which play a fundamental role in this paper.
Permutrees were introduced in~\cite{PilaudPons-permutrees} to generalize and interpolate between permutations and binary trees, and explain the combinatorial, geometric and algebraic similarities between them.
They were inspired by Cambrian trees~\cite{ChatelPilaud,LangePilaud} which provide a combinatorial model to the type~$A$ Cambrian lattices of~\cite{Reading-CambrianLattices}.
As the classical construction of the associahedron due to~\cite{ShniderSternberg,Loday} and its generalization to all Cambrian associahedra by~\cite{HohlwegLange}, all permutreehedra are obtained by deleting inequalities in the facet description of the permutahedron~$\Perm$.
Such polytopes are called removahedra, and were studied in the context of graph associahedra in~\cite{Pilaud-removahedra}.

In general, any lattice congruence~$\equiv$ of the weak order on~$\fS_n$ defines a quotient fan~$\Fan_\equiv$ obtained by gluing together the chambers of the braid fan corresponding to permutations in the same congruence class~\cite{Reading-HopfAlgebras}.
This quotient fan~$\Fan_\equiv$ was recently proven to be the normal fan of a polytope~$P_\equiv$ called quotientope~\cite{PilaudSantos-quotientopes, PadrolPilaudRitter}.
As their normal fans all refine the braid fan, quotientopes belong to the class of deformed permutahedra studied in~\cite{Postnikov, PostnikovReinerWilliams} (we prefer the name ``deformed permutahedra'' rather than ``generalized permutahedra'' as there are many generalizations of permutahedra).
All deformed permutahedra are obtained from the permutahedron~$\Perm$ by moving facets without ``passing a vertex'' (in the sense of~\cite{Postnikov}).
Observe that not all deformed permutahedra are removahedra, since it is sometimes inevitable to move facets, not only to remove~them.

The construction of~\cite{PilaudPons-permutrees} for permutreehedra and the construction of~\cite{PilaudSantos-quotientopes} for quotientopes seem quite different.
The polytopes resulting from the former construction lie outside the permutahedron~$\Perm$ (outsidahedra) while those resulting from the latter construction lie inside the permutahedron~$\Perm$ (insidahedra).
In fact, it was already observed in~\cite[Rem.~12]{PilaudSantos-quotientopes} that the quotient fans of some lattice congruences cannot be realized by a removahedron.
The first contribution of this paper is to show the following strong dichotomy between the lattices congruences of the weak order regarding realizability by removahedra.

\begin{theorem}
\label{thm:main}
Let~$\equiv$ be a lattice congruence of the weak order on~$\fS_n$. Then
\begin{enumerate}[(i)]
\item if~$\equiv$ is not a permutree congruence, then the quotient fan~$\Fan_\equiv$ is not the normal fan of a removahedron,
\item if~$\equiv$ is a permutree congruence, then the quotient fan~$\Fan_\equiv$ is the normal fan of a polytope obtained by deleting inequalities in the facet description of any polytope realizing the braid fan (not only the classical permutahedron~$\Perm$).
\end{enumerate}
\end{theorem}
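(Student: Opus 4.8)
The plan is to pin down, for a lattice congruence $\equiv$ of the weak order on $\fS_n$, exactly which facets of a realization of the braid fan may be deleted without changing the normal fan, and to read this off from the arc/shard data of $\equiv$ and from the type cone $\typeCone(\Fan_\equiv)$. A common reduction simplifies both parts. If $\Fan_\equiv$ is the normal fan of a removahedron $\bigcap_{S \in \mathcal{S}} H_S$ obtained by keeping a family $\mathcal{S}$ of facets of a realization $B$ of the braid fan, then discarding the redundant inequalities from this description changes neither the polytope nor its normal fan, so we may assume $\mathcal{S}$ equals $\mathcal{S}_\equiv$, the set of subsets $S$ whose braid ray $\rho_S$ (the outer normal ray of the facet $S$ of the permutahedron) survives as a ray of $\Fan_\equiv$. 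Thus ``$\Fan_\equiv$ is realized by a removahedron of $B$'' is equivalent to ``the polytope $\bigcap_{S \in \mathcal{S}_\equiv} H_S$ cut out by the inequalities of $B$ indexed by the rays of $\Fan_\equiv$ has normal fan exactly $\Fan_\equiv$'', and both parts of the theorem become statements about this single candidate polytope; so the first task is to describe $\mathcal{S}_\equiv$ combinatorially in terms of the arcs left uncontracted by $\equiv$.

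For part~(ii), fix a decoration $\decoration$ and consider the permutree congruence $\equiv_\decoration$. The permutree fan $\Fan_{\equiv_\decoration}$ is simplicial (permutreehedra are simple polytopes~\cite{PilaudPons-permutrees}), so each of its maximal cones is spanned by exactly $n-1$ of its rays; hence, for any height vector $h$ in the open type cone of the braid fan, the candidate polytope has at most one candidate vertex per maximal cone, namely the unique solution of the corresponding $n-1$ tight equations $\langle \rho_S, x \rangle = h_S$. It then suffices to show that this assignment produces a polytope with normal fan exactly $\Fan_{\equiv_\decoration}$, or equivalently that $h$ in the interior of the type cone of the braid fan forces $h|_{\mathcal{S}_{\equiv_\decoration}}$ into the interior of $\typeCone(\Fan_{\equiv_\decoration})$. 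I would prove this by writing, for every wall of $\Fan_{\equiv_\decoration}$, the associated wall-crossing inequality (from the linear dependence among the $\rho_S$) as a nonnegative combination of the submodularity (``square'') inequalities defining the type cone of the braid fan~\cite{Postnikov}, with at least one submodular inequality appearing with positive coefficient, so that strict submodularity of $h$ forces the strict permutree inequality and thus prevents any further collapse. Since $\equiv_\decoration$ is realized by some polytope --- the quotientope, or the permutreehedron of~\cite{PilaudPons-permutrees} --- the cone $\typeCone(\Fan_{\equiv_\decoration})$ is nonempty and its facets are precisely the irredundant wall-crossing inequalities, so this same computation delivers the explicit description of the type cones of the permutree fans announced in the introduction. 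The main work here is combinatorial bookkeeping: exhibiting, for each wall of a permutree fan, the right nonnegative combination of submodular inequalities, and singling out which of the resulting inequalities are irredundant.

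For part~(i) I argue by contraposition, by induction on $n$, the base cases being immediate. Suppose $\Fan_\equiv$ is the normal fan of a removahedron $R = \bigcap_{S \in \mathcal{S}_\equiv} H_S$ of $\Perm[n]$. Every facet $R \cap H_S$ of $R$ is again a removahedron, now of the product $\Perm[S] \times \Perm[{[n] \ssm S}]$, and its normal fan is the link of $\rho_S$ in $\Fan_\equiv$, which is the quotient fan of the ``localization'' $\equiv_S$ of $\equiv$ at the facet $S$ (a congruence on $\fS_S \times \fS_{[n] \ssm S}$); since a removahedron of a product of permutahedra is a product of removahedra, its normal fan is a product fan, forcing $\equiv_S$ to split as a product of congruences of the two factors, each a permutree congruence by the induction hypothesis. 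As this holds for all $S$, the localizations patch up: the restriction of $\equiv$ to the join-irreducibles whose arc has span at most $n-2$ coincides with that of a unique $\equiv_\decoration$. It remains to show $\equiv \, = \, \equiv_\decoration$, i.e.\ that $\equiv$ does not, in addition, essentially contract some span-$(n-1)$ arc $\alpha = (1,n,A,B)$ left uncontracted by $\equiv_\decoration$; here ``essentially'' means not forced by the shorter contracted arcs, which are exactly those of $\equiv_\decoration$. This is the crux. One route is purely combinatorial: show that such an $\alpha$ would, through the forcing relation on arcs, force a shorter arc left uncontracted by $\equiv_\decoration$, contradicting the induction hypothesis. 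The other, where this does not immediately apply, is geometric: using the reduction above together with part~(ii) applied to the permutahedron, one knows $\Fan_{\equiv_\decoration}$ is realized by the removahedron $\bigcap_{S \in \mathcal{S}_{\equiv_\decoration}} H_S$, and contracting the extra wall $\shard_\alpha$ to obtain $\Fan_\equiv$ would require the corresponding wall-crossing inequality --- with heights inherited from $\Perm[n]$, hence a strictly positive combination of submodular inequalities that hold strictly for the permutahedron --- to degenerate to an equality, which is impossible. The main obstacle in part~(i) is precisely this last step --- ruling out an extra essentially contracted long arc: deciding when the forcing relation already forbids it and, otherwise, making the degeneracy obstruction precise; note that the essential uniqueness of the candidate polytope, established in the reduction, is what guarantees a bad removahedron cannot be repaired by a cleverer choice of deleted facets.
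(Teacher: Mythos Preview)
Your plan for part~(ii) is correct but overshoots. You propose to express each wall-crossing inequality of~$\Fan_\decoration$ as a nonnegative combination of submodular inequalities with at least one positive coefficient. The paper proves something sharper and simpler (\cref{prop:wallCrossingInequalitiesPermutreeFan}): every wall-crossing inequality of~$\Fan_\decoration$ already \emph{is} a single submodular inequality~$h(I)+h(J)>h(I\cap J)+h(I\cup J)$, because whenever two adjacent permutree chambers exchange rays~$\ray(I)$ and~$\ray(J)$, the rays~$\ray(I\cap J)$ and~$\ray(I\cup J)$ lie in the common facet. This one structural observation dispatches~(ii) immediately and is also what drives the type cone description in \cref{sec:typeConesPermutreeFans}; your more general ``nonnegative combination'' framework would work but obscures this.

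For part~(i), your induction via facet-localization is genuinely different from the paper's direct argument, and the reduction to the ``crux'' (an extra contracted span-$(n-1)$ arc~$\alpha$) is plausible, though the patching of local decorations into a global~$\decoration$ deserves a sentence. The real gap is at the crux itself. Your ``combinatorial route'' is vacuous: a span-$(n-1)$ shard is minimal in the forcing order and forces nothing shorter, so no contradiction with the induction hypothesis arises. Your ``geometric route'' is misformulated. You claim that collapsing the wall~$\shard_\alpha$ forces its own wall-crossing inequality to become an equality; this is the non-simplicial constraint of \cref{prop:characterizationPolytopalFanNonSimplicial} on the merged chamber, and it is valid \emph{only if all four rays~$\ray(I),\ray(J),\ray(I\cap J),\ray(I\cup J)$ survive in~$\Fan_\equiv$}. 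But when~$\alpha$ is the up shard~$\shard(1,n,{]1,n[})$, every wall of~$\Fan_\decoration$ inside~$\alpha$ has~$I\cap J={]1,n[}$, and by \cref{lem:raysQuotientFan} removing~$\alpha$ kills precisely the ray~$\ray({]1,n[})$; the equality you want to contradict is therefore not a constraint on~$\Fan_\equiv$ at all. The paper's obstruction lives elsewhere: it exhibits a \emph{different} wall of the coarser fan~$\Fan_\equiv$ (along~$\b{x}_{i+1}=\b{x}_{j-1}$), together with five explicit rays~$I,J,K,L,M$ sitting in the two adjacent chambers (\cref{lem:raysAreHere,lem:raysAreInAdjacentChambers}), whose linear dependence~$\ray(I)+\ray(J)=\ray(K)+\ray(L)+\ray(M)$ is \emph{not} of submodular shape, and whose wall-crossing inequality the permutahedron heights fail (\cref{lem:pbHeights}). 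That construction is the missing ingredient in your argument.
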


This statement is based on the understanding of the inequalities governing the facet heights that ensure to obtain a polytopal realization of the quotient fan.
These inequalities, given by pairs of adjacent cones of the fan and known as wall-crossing inequalities, define the space of all realizations of the fan.
This space of realizations is an open polyhedral cone called type cone and studied by~\cite{McMullen-typeCone}, and its closure is also known as the deformation cone~\cite{Postnikov, PostnikovReinerWilliams}.
For instance, the deformation cone of the permutahedron~$\Perm$ is the space of submodular functions, and corresponds to all deformed permutahedra~\cite{Postnikov}.
The main contribution of this paper is the facet description of the type cone of any permutree fan, thus providing a complete description of all polytopal realizations of the permutree fans.
This description requires three steps: for the $\decoration$-permutree fan~$\Fan_\decoration$ corresponding to a decoration~$\decoration$, we provide purely combinatorial descriptions (only in terms of~$\decoration$) for
\begin{itemize}
\item the subsets of~$[n]$ which correspond to rays of~$\Fan_\decoration$ (\cref{prop:raysPermutreeFan}),
\item the pairs of subsets of~$[n]$ which correspond to exchangeables rays of~$\Fan_\decoration$ (\cref{prop:exchangeablePairsPermutreeFan}),
\item the pairs of subsets of~$[n]$ which correspond to facets of the type cone of~$\Fan_\decoration$ (\cref{prop:extremalExchangeablePairsPermutreeFan}).
\end{itemize}
From this facet description, we derive summation formulas for the number of facets of the type cones of permutree fans, leading to a characterization of the permutree fans whose type cone is simplicial.
As advocated in~\cite{PadrolPaluPilaudPlamondon}, this property is interesting because it leads on the one hand to a simple description of all polytopal realizations of the fan in the kinematic space~\cite{ArkaniHamedBaiHeYan}, and on the other hand to canonical Minkowski sum decompositions of these realizations.

This paper opens the door to a description of the type cone of the quotient fan for any lattice congruence of the weak order on~$\fS_n$, not only for permutree congruences.
Preliminary computations however indicate that the combinatorics of the facet description of the type cone of an arbitrary quotient fan is much more intricated than that of permutree fans.

The paper is organized as follows.
\cref{sec:preliminaries} provides a recollection of all material needed in the paper, including polyhedral geometry and type cones (\cref{subsec:polyhedralGeometry}), lattice quotients of the weak order (\cref{subsec:latticeCongruences}), deformed permutahedra and removahedra (\cref{subsec:deformedPermutahedraRemovahedra}) and permutrees (\cref{subsec:permutrees}).
\cref{sec:removahedralCongruences} is devoted to the proof of~\cref{thm:main}.
Finally, the type cones of all permutree fans are described in \cref{sec:typeConesPermutreeFans}.

%%%%%%%%%%%%%%%%%%%%%%%%%%%%%%%%%%%%%%

\section{Preliminaries}
\label{sec:preliminaries}

\enlargethispage{.3cm}
We start with preliminaries on polyhedral geometry, type cones, braid arrangements, quotient fans, shards, deformed permutahedra, removahedra and permutrees.
The presentation is largely inspired by the papers~\cite{PadrolPaluPilaudPlamondon, PilaudSantos-quotientopes, PilaudPons-permutrees} and we reproduce here some of their pictures.

%%%%%%%%%%%

\subsection{Polyhedral geometry and type cones}
\label{subsec:polyhedralGeometry}

We start with basic notions of polyhedral geometry (see G.~Ziegler's classic textbook~\cite{Ziegler-polytopes}) and a short introduction to type cones (see the original work of P.~McMullen~\cite{McMullen-typeCone} or their recent application to $\b{g}$-vector fans in~\cite{PadrolPaluPilaudPlamondon}).

\subsubsection{Polyhedral geometry}

A hyperplane~$H \subset \R^n$ is a \defn{supporting hyperplane} of a set~$X \subset \R^n$ if~$H \cap X \ne \varnothing$ and~$X$ is contained in one of the two closed half-spaces of~$\R^n$ defined by~$H$.

We denote by~$\R_{\ge0}\rays \eqdef \set{\sum_{\ray \in \rays} \lambda_{\ray} \, \ray}{\lambda_{\ray} \in \R_{\ge0}}$ the \defn{positive span} of a set~$\rays$ of vectors of~$\R^n$.
A (polyhedral) \defn{cone} is a subset of~$\R^n$ defined equivalently as the positive span of finitely many vectors or as the intersection of finitely many closed linear halfspaces.
Its \defn{faces} are its intersections with its supporting linear hyperplanes, and its \defn{rays} (resp.~\defn{facets}) are its dimension~$1$ (resp. codimension~$1$) faces.
A cone is \defn{simplicial} if it is generated by a set of linearly independent vectors.

A (polyhedral) \defn{fan}~$\Fan$ is a collection of cones which are closed under faces (if~$C \in \Fan$ and~$F$ is a face of~$C$, then~$F \in \Fan$) and intersect properly (if~$C, D \in \Fan$, then~$C \cap D$ is a face of both~$C$ and~$D$).
The \defn{chambers} (resp.~\defn{walls}, resp.~\defn{rays}) of~$\Fan$ are its codimension~$0$ (resp.~codimension~$1$, resp.~dimension~$1$) cones.
The fan~$\Fan$ is \defn{simplicial} if all its cones are, \defn{complete} if the union of its cones covers the ambient space~$\R^n$, and \defn{essential} if it contains the cone~$\{\b{0}\}$. Note that every complete fan is the product of an essential fan with its lineality space (the largest linear subspace contained in all cones of~$\Fan$).
Given two fans~$\Fan, \Fan[G]$ in~$\R^n$, we say that~$\Fan$ \defn{refines}~$\Fan[G]$ (and that~$\Fan[G]$ \defn{coarsens}~$\Fan$) if every cone of~$\Fan[G]$ is a union of cones of~$\Fan$.
In a simplicial fan, we say that two maximal cones are \defn{adjacent} if they share a facet, and that two rays are \defn{exchangeable} if they belong to two adjacent cones but not to their common facet.

A \defn{polytope} is a subset of~$\R^n$ defined equivalently as the convex hull of finitely many points or as a bounded intersection of finitely many closed affine halfspaces.
Its \defn{faces} are its intersections with its supporting affine hyperplanes, and its \defn{vertices} (resp.~\defn{edges}, resp.~\defn{facets}) are its dimension~$0$ (resp. dimension~$1$, codimension~$1$) faces.
The \defn{normal cone} of a face~$F$ of a polytope~$P$ is the cone generated by the outer normal vectors of the facets of~$P$ containing~$F$.
The \defn{normal fan} of~$P$ is the fan formed by the normal cones of all faces of~$P$.

\subsubsection{Type cones}
\label{subsec:typeCones}

Fix an essential complete simplicial fan~$\Fan$ in~$\R^n$ with~$N$ rays.
Let~$\b{G}$ be the $N \times n$-matrix whose rows are (representative vectors for) the rays of~$\Fan$.
For any vector~$\b{h} \in \R^N$, we define the polytope~$P_\b{h} \eqdef \set{\b{x} \in \R^n}{\b{G}\b{x} \le \b{h}}$.
In other words, $P_\b{h}$ has an inequality ${\dotprod{\ray}{\b{x}} \le \b{h}_{\ray}}$ for each ray~$\ray$ of~$\Fan$, where~$\b{h}_{\ray}$ denotes the coordinate of~$\b{h}$ corresponding to~$\ray$.
Note that~$\Fan$ is not necessarily the normal fan of~$P_\b{h}$.
The vectors~$\b{h}$ for which this holds are characterized by the following classical statement.
It already appeared in the study of Minkowski summands of polytopes~\cite{Meyer, McMullen-typeCone}, and in the theory of secondary polytopes~\cite{GelfandKapranovZelevinsky}, see also~\cite{DeLoeraRambauSantos}.
We present here a convenient formulation from~\cite[Lem.~2.1]{ChapotonFominZelevinsky}.

\begin{proposition}
\label{prop:characterizationPolytopalFan}
Let~$\Fan$ be an essential complete simplicial fan in~$\R^n$ and~$\b{G}$ be the $N \times n$-matrix whose rows are the rays of~$\Fan$.
Then the following are equivalent for any vector~$\b{h} \in \R^N$:
\begin{enumerate}
\item The fan~$\Fan$ is the normal fan of the polytope~$P_\b{h} \eqdef \set{\b{x} \in \R^n}{\b{G}\b{x} \le \b{h}}$.
\item For any two adjacent chambers~$\R_{\ge0}\rays$ and~$\R_{\ge0}\rays[S]$ of~$\Fan$ with~$\rays \ssm \{\ray\} = \rays[S] \ssm \{\ray[s]\}$,
\[
\alpha \, \b{h}_{\ray} + \beta \, \b{h}_{\ray[s]} + \sum_{\ray[t] \in \rays \cap \rays[S]} \gamma_{\ray[t]} \, \b{h}_{\ray[t]} > 0,
\]
where
\[
\alpha \, \ray + \beta \, \ray[s] + \sum_{\ray[t] \in \rays \cap \rays[S]} \gamma_{\ray[t]} \, \ray[t] = 0
\]
is the unique (up to rescaling) linear dependence with~$\alpha, \beta > 0$ between the rays of~$\rays \cup \rays[S]$.
\end{enumerate}
\end{proposition}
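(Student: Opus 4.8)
The statement is classical; here is the plan I would follow. \emph{The guiding idea} is to read $\b{h}$ as a system of heights on the rays of $\Fan$. Since $\Fan$ is essential, complete and simplicial, every chamber $C = \R_{\ge0}\rays$ is spanned by a basis $\rays$ of $\R^n$, so there is a unique linear form $\ell_C$ on $\R^n$ with $\ell_C(\ray) = \b{h}_\ray$ for $\ray \in \rays$, or equivalently a unique point $\point{C}$ with $\dotprod{\ray}{\point{C}} = \b{h}_\ray$ for $\ray \in \rays$. As $\Fan$ is complete, its chambers are closed and cover $\R^n$, and on a common face $C \cap C'$ the forms $\ell_C$ and $\ell_{C'}$ agree (they are determined on $C \cap C'$ by the common heights $\b{h}_\ray$ on the rays spanning it). Hence the forms $\ell_C$ glue into a continuous, positively homogeneous, piecewise-linear function $f \colon \R^n \to \R$ with $f|_C = \ell_C$. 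The proposition then says that $f$ is convex, and strictly so across each wall, if and only if all wall-crossing inequalities hold; combined with the standard dictionary between convex piecewise-linear functions and polytopes, this gives the equivalence.

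The next step, which also settles the implication (1) $\Rightarrow$ (2), is to rewrite the wall-crossing inequality at a pair of adjacent chambers locally. Fix $C = \R_{\ge0}\rays$ and $C' = \R_{\ge0}\rays[S]$ with $\rays \ssm \{\ray\} = \rays[S] \ssm \{\ray[s]\}$, common wall $W$, and the dependence $\alpha \, \ray + \beta \, \ray[s] + \sum_{\ray[t] \in \rays \cap \rays[S]} \gamma_{\ray[t]} \, \ray[t] = 0$; the coefficients of $\ray$ and $\ray[s]$ are nonzero (as $\rays$ and $\rays[S]$ are bases) and of equal sign (as $\ray$ and $\ray[s]$ lie strictly on opposite sides of $\vect(W)$, $C$ and $C'$ being the two chambers adjacent to $W$), so we normalize $\alpha, \beta > 0$. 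Pairing the dependence with $\point{C}$, and with the form $\ell_{C'}$, respectively, and using $\dotprod{\ray}{\point{C}} = \b{h}_\ray$, $\dotprod{\ray[t]}{\point{C}} = \b{h}_{\ray[t]} = \ell_{C'}(\ray[t])$, and $\ell_{C'}(\ray[s]) = \b{h}_{\ray[s]}$, one gets
\[
\alpha \, \b{h}_\ray + \beta \, \b{h}_{\ray[s]} + \sum_{\ray[t]} \gamma_{\ray[t]} \, \b{h}_{\ray[t]} \;=\; \beta \big( \b{h}_{\ray[s]} - \dotprod{\ray[s]}{\point{C}} \big) \;=\; \alpha \, (\ell_C - \ell_{C'})(\ray).
\]
If $\Fan$ is the normal fan of $P_\b{h}$, then $\point{C}$ is the vertex with normal cone $C$; as $\ray[s] \notin \rays$, it does not lie on the facet of $P_\b{h}$ of outer normal $\ray[s]$, so $\dotprod{\ray[s]}{\point{C}} < \b{h}_{\ray[s]}$ and the left-hand side is positive --- this is (2). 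And in general, since $\ell_C - \ell_{C'}$ vanishes on $W$ while $\ray$ lies strictly on the $C$-side of $W$, the equation shows that the wall-crossing inequality at $(C,C')$ holds if and only if $f$ agrees with $\max(\ell_C, \ell_{C'})$ on $C \cup C'$, i.e.\ if and only if $f$ is convex across $W$ (convex along any line crossing $W$ transversally).

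For (2) $\Rightarrow$ (1), assume all wall-crossing inequalities hold; I claim $f$ is convex, for which it suffices to show $\ell_C \le f$ on $\R^n$ for every chamber $C$. Suppose $f(\b{y}_0) < \ell_C(\b{y}_0)$; choose $\b{y}_1$ in the interior of $C$ and perturb $\b{y}_0, \b{y}_1$ so that the segment $[\b{y}_1, \b{y}_0]$, parametrized by $\b{z} \colon [0,1] \to \R^n$, meets only chambers and walls of $\Fan$. Then $t \mapsto f(\b{z}(t))$ is piecewise-linear, and at each wall crossing its slope does not decrease, precisely by the local form of the wall-crossing inequality just obtained; hence it is convex, so $\varphi(t) \eqdef f(\b{z}(t)) - \ell_C(\b{z}(t))$ is convex, and since $f = \ell_C$ on a neighbourhood of $\b{y}_1$ it vanishes on an initial interval; thus $\varphi \ge 0$ throughout, contradicting $\varphi(1) < 0$. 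So $f$ is convex, and being piecewise-linear, positively homogeneous and finite it equals $\b{y} \mapsto \max_C \dotprod{\b{y}}{\point{C}}$, the support function of the polytope $\conv \set{\point{C}}{C \text{ a chamber of } \Fan}$, which in turn equals $P_\b{h}$ (given $\b{x}$ with $\dotprod{\ray}{\b{x}} \le \b{h}_\ray$ for all rays $\ray$, expanding any $\b{y}$ in the basis of rays of its chamber yields $\dotprod{\b{y}}{\b{x}} \le f(\b{y})$). Finally, since adjacent chambers of $\Fan$ give distinct forms $\ell_C$ (by the strict inequality above) and both $\Fan$ and the normal fan of $P_\b{h}$ are complete, a routine argument identifies the normal fan of $P_\b{h}$ with $\Fan$.

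The only step that is more than bookkeeping is the implication ``$f$ convex across every wall $\Rightarrow$ $f$ convex'', handled by the one-parameter slope-monotonicity argument above, where completeness of $\Fan$ is essential; the remaining ingredients are the standard correspondence between complete fans, convex positively homogeneous piecewise-linear functions, and their associated polytopes.
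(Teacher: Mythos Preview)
The paper does not actually prove this proposition: it is stated as a classical result with references to McMullen, Meyer, Gelfand--Kapranov--Zelevinsky, and De Loera--Rambau--Santos, and the formulation is attributed to Chapoton--Fomin--Zelevinsky. So there is no paper proof to compare against.

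Your argument is correct and is precisely the standard one found in those references: interpret~$\b{h}$ as defining a piecewise-linear function~$f$ whose linearity domains are the chambers of~$\Fan$, rewrite each wall-crossing inequality as the local convexity of~$f$ across the corresponding wall, and then use a segment argument (slope monotonicity along a generic line) to promote local convexity to global convexity. The identification of~$P_\b{h}$ with~$\conv\set{\point{C}}{C \text{ chamber}}$ and of its normal fan with~$\Fan$ then follows from the strict inequalities, which guarantee that adjacent chambers yield distinct vertices. One small point you leave implicit: the inclusion~$\point{C} \in P_\b{h}$ (needed for the equality~$P_\b{h} = \conv\set{\point{C}}{C}$) follows from~$f \ge \ell_C$ applied at each ray~$\ray'$, since~$f(\ray') = \b{h}_{\ray'}$. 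Otherwise the sketch is complete.
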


The inequalities in this statement are called \defn{wall-crossing inequalities}.
For convenience, let us denote by~$\coefficient[{\ray[t]}][\rays][{\rays[S]}]$ the coefficient of~$\ray[t]$ in the unique linear dependence between the rays of~$\rays \cup \rays[S]$ such that~${\coefficient[\ray][\rays][{\rays[S]}] + \coefficient[{\ray[s]}][\rays][{\rays[S]}] = 2}$, so that the inequality above rewrites as~${\sum_{\ray[t] \in \rays \cup \rays[S]} \coefficient[{\ray[t]}][\rays][{\rays[S]}] \, \b{h}_{\ray[t]} > 0}$.

When considering the question of the realizability of a complete simplicial fan~$\Fan$ by a polytope, it is natural to consider all possible realizations of this fan, as was done by P.~McMullen in~\cite{McMullen-typeCone}.
The \defn{type cone} of~$\Fan$ is the cone
\begin{align*}
\typeCone(\Fan) & \eqdef \set{\b{h} \in \R^N}{\Fan \text{ is the normal fan of } P_\b{h}} \\
& = \Bigset{\b{h} \in \R^N}{\sum_{\ray[t] \in \rays \cup \rays[S]} \coefficient[{\ray[t]}][\rays][{\rays[S]}] \, \b{h}_{\ray[t]} > 0 \; \begin{array}{l} \text{for any adjacent chambers} \\ \text{$\R_{\ge0}\rays$ and~$\R_{\ge0}\rays[S]$ of~$\Fan$} \end{array}}.
\end{align*}

Note that the type cone~$\typeCone(\Fan)$ is an open cone.
We denote by $\ctypeCone(\Fan)$ the closure of $\typeCone(\Fan)$, and still call it the type cone of~$\Fan$. 
It is the closed polyhedral cone defined by the inequalities $\sum_{\ray[t] \in \rays \cup \rays[S]} \coefficient[{\ray[t]}][\rays][{\rays[S]}] \, \b{h}_{\ray[t]} \ge 0$ for any adjacent chambers~$\R_{\ge0}\rays$ and~$\R_{\ge0}\rays[S]$. 
It describes all polytopes~$P_\b{h}$ whose normal fans coarsen the fan~$\Fan$.
If $\Fan$ is the normal fan of the polytope~$P$, then $\ctypeCone(\Fan)$ is also known as the \defn{deformation cone} of $P$, see~\cite{Postnikov, PostnikovReinerWilliams}.
We use $\ctypeCone(\Fan)$ rather than~$\typeCone(\Fan)$ when we want to speak about the facets of $\ctypeCone(\Fan)$.

Also observe that the lineality space of the type cone~$\ctypeCone(\Fan)$ has dimension~$n$ (it is invariant by translation in~$\b{G} \R^n$).
In particular, the type cone~$\ctypeCone(\Fan)$ is simplicial when it has~$N-n-1$ facets.
While very particular, the fans for which the type cone is simplicial are very interesting as all their polytopal realizations can be described as follows.

\begin{proposition}[{\cite[Coro.~1.11]{PadrolPaluPilaudPlamondon}}]
\label{prop:simplicialTypeCone}
Let~$\Fan$ be an essential complete simplicial fan in~$\R^n$ with~$N$ rays, such that the type cone~$\ctypeCone(\Fan)$ is simplicial.
Let~$\b{K}$ be the $(N-n) \times N$-matrix whose rows are the inner normal vectors of the facets of~$\ctypeCone(\Fan)$.
Then the polytope
\[
Q(\b{u}) \eqdef \set{\b{z} \in \R_{\ge 0}^N}{\b{K}\b{z} = \b{u}}
\]
is a realization of the fan~$\Fan$ for any positive vector~$\b{u} \in \R_{>0}^{N-n}$.
Moreover, the polytopes~$Q(\b{u})$ for~$\b{u} \in \R_{>0}^{N-n}$ describe all polytopal realizations of~$\Fan$.
\end{proposition}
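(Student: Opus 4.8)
The plan is to realize each~$Q(\b{u})$ as an affine image of one of the polytopes~$P_\b{h}$ through the passage to slack variables, and to read the ``moreover'' part off the very same correspondence. First I would translate the hypothesis that~$\ctypeCone(\Fan)$ is simplicial into linear algebra. Its lineality space is exactly~$\b{G}\R^n$, of dimension~$n$, and by hypothesis $\b{K}$ is the $(N-n)\times N$-matrix of its facet normals, which are then linearly independent; hence $\b{K}$ has rank~$N-n$, $\ker\b{K}=\b{G}\R^n$ (equivalently $\b{K}\b{G}=0$), and
\[
\ctypeCone(\Fan)=\set{\b{h}\in\R^N}{\b{K}\b{h}\ge\b{0}}, \qquad \typeCone(\Fan)=\set{\b{h}\in\R^N}{\b{K}\b{h}>\b{0}},
\]
the latter because every wall-crossing functional lies in the dual cone of~$\ctypeCone(\Fan)$, which is generated by the rows of~$\b{K}$, and is a nonzero such combination, hence is positive exactly on~$\{\b{K}\b{h}>\b{0}\}$. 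By \cref{prop:characterizationPolytopalFan}, $\Fan$ is the normal fan of~$P_\b{h}$ if and only if~$\b{K}\b{h}>\b{0}$.

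The heart of the argument is a slack-variable duality. For an arbitrary~$\b{h}\in\R^N$, a point~$\b{x}\in\R^n$ lies in~$P_\b{h}$ if and only if $\b{h}-\b{G}\b{x}\in\R_{\ge0}^N$. Since $\b{G}$ has rank~$n$, the affine map $\iota_\b{h}\colon\b{x}\mapsto\b{h}-\b{G}\b{x}$ is injective, with image $\set{\b{h}-\b{G}\b{x}}{\b{x}\in\R^n}=\b{h}+\ker\b{K}=\set{\b{z}\in\R^N}{\b{K}\b{z}=\b{K}\b{h}}$. Thus $\iota_\b{h}$ restricts to an affine isomorphism from~$P_\b{h}$ onto~$Q(\b{K}\b{h})$. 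In particular $Q(\b{K}\b{h})$ is a polytope affinely isomorphic to~$P_\b{h}$; and as the linear part of~$\iota_\b{h}$ is~$-\b{G}$, which identifies~$\R^n$ with the space~$\ker\b{K}$ of directions of the affine hull of~$Q(\b{K}\b{h})$, tracking outer normals through~$\iota_\b{h}$ shows that the normal fan of~$Q(\b{K}\b{h})$, read in~$\ker\b{K}\cong\R^n$, coincides with that of~$P_\b{h}$.

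Both assertions now follow. Given~$\b{u}\in\R_{>0}^{N-n}$, choose~$\b{h}$ with~$\b{K}\b{h}=\b{u}$ (possible since $\b{K}$ is onto~$\R^{N-n}$); then $\b{K}\b{h}=\b{u}>\b{0}$, so $\Fan$ is the normal fan of~$P_\b{h}$, and $Q(\b{u})=Q(\b{K}\b{h})$ is affinely isomorphic to~$P_\b{h}$, hence a realization of~$\Fan$. Conversely, let~$P$ be any polytope with normal fan~$\Fan$ and let~$\b{h}\in\R^N$ be defined by $\b{h}_{\ray}\eqdef\max_{\b{x}\in P}\dotprod{\ray}{\b{x}}$ for each ray~$\ray$ of~$\Fan$; since the outer normals of the facets of~$P$ are precisely the rays of its normal fan, $P=P_\b{h}$, so $\b{h}\in\typeCone(\Fan)$, $\b{u}\eqdef\b{K}\b{h}\in\R_{>0}^{N-n}$, and $P=P_\b{h}\cong Q(\b{u})$. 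Hence the polytopes~$Q(\b{u})$, for~$\b{u}\in\R_{>0}^{N-n}$, are exactly the polytopal realizations of~$\Fan$ (and such realizations exist, which is not assumed but drops out, since $\typeCone(\Fan)=\{\b{K}\b{h}>\b{0}\}$ is the nonempty interior of the full-dimensional cone~$\ctypeCone(\Fan)$).

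The step I expect to need the most care is not a computation but the bookkeeping around the word ``realization'': since~$Q(\b{u})$ lives in an $n$-dimensional affine subspace of~$\R^N$ rather than in~$\R^n$, one must state precisely what it means for it to realize~$\Fan$ and check that~$\iota_\b{h}$ transports the normal fan of~$P_\b{h}$ to that of~$Q(\b{K}\b{h})$ under the identification furnished by~$-\b{G}$ (normal cones transform by the inverse transpose of the linear part of an affine isomorphism). Once the hypothesis on~$\ctypeCone(\Fan)$ has been turned into $\b{K}\b{G}=0$, $\ker\b{K}=\b{G}\R^n$, and $\typeCone(\Fan)=\{\b{K}\b{h}>\b{0}\}$, the rest is routine.
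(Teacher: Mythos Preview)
The paper does not prove this proposition; it is quoted from \cite[Coro.~1.11]{PadrolPaluPilaudPlamondon} and used as a black box. Your argument is a correct self-contained proof: the translation of simpliciality into $\ker\b{K}=\b{G}\R^n$ and $\typeCone(\Fan)=\{\b{K}\b{h}>\b{0}\}$ is accurate, and the slack map $\iota_\b{h}\colon\b{x}\mapsto\b{h}-\b{G}\b{x}$ indeed gives an affine isomorphism $P_\b{h}\cong Q(\b{K}\b{h})$, from which both assertions follow as you describe. The only delicate point is exactly the one you flag, namely what it means for $Q(\b{u})\subset\R^N$ to ``realize'' a fan in~$\R^n$; your identification via the linear part $-\b{G}$ is the right way to handle it.
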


In this paper, we shall also use a non-simplicial version of \cref{prop:characterizationPolytopalFan}, see \eg~\cite{Meyer}.

\begin{proposition}
\label{prop:characterizationPolytopalFanNonSimplicial}
Let~$\Fan$ be an essential complete (non necessarily simplicial) fan in~$\R^n$ and~$\b{G}$ be the $N \times n$-matrix whose rows are the rays of~$\Fan$.
Then the following are equivalent for any~$\b{h} \in \R^N$:
\begin{enumerate}
\item The fan~$\Fan$ is the normal fan of the polytope~$P_\b{h} \eqdef \set{\b{x} \in \R^n}{\b{G}\b{x} \le \b{h}}$.
\item The coordinates of~$\b{h}$ satisfy the following equalities and inequalities:
	\begin{enumerate}[$\bullet$]
	\item for any (non-simplicial) chamber~$\R_{\ge0}\rays$ of~$\Fan$ and any linear dependence~${\sum_{\ray \in \rays} \gamma_{\ray} \ray = 0}$ among the rays of~$\rays$, we have~$\sum_{\ray \in \rays} \gamma_{\ray} \b{h}_{\ray} = 0$,
	\item for any two adjacent chambers~$\R_{\ge0}\rays$ and~$\R_{\ge0}\rays[S]$ of~$\Fan$, any rays~$\ray \in \rays \ssm \rays[S]$ and~${\ray[s] \in \rays[S] \ssm \rays}$, and any linear dependence $\alpha \, \ray + \beta \, \ray[s] + \sum_{\ray[t] \in \rays \cap \rays[S]} \gamma_{\ray[t]} \, \ray[t] = 0$ among the rays~$\{\ray, \ray[s]\} \cup (\rays \cap \rays[S])$ with $\alpha, \beta > 0$, we have~$\alpha \, \b{h}_{\ray} + \beta \, \b{h}_{\ray[s]} + \sum_{\ray[t] \in \rays \cap \rays[S]} \gamma_{\ray[t]} \, \b{h}_{\ray[t]} > 0$.
	\end{enumerate}
\end{enumerate}
\end{proposition}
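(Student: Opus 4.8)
The plan is to reduce the equivalence to the convexity of a single piecewise-linear function on~$\R^n$. To any~$\b{h}$ one attaches, for each chamber~$C = \R_{\ge0}\rays$ of~$\Fan$, the solution of the linear system $\dotprod{\ray}{\b{x}} = \b{h}_{\ray}$ ($\ray \in \rays$), assembles the resulting linear forms into a piecewise-linear function~$g \colon \R^n \to \R$, and shows that~$\Fan$ is the normal fan of~$P_\b{h}$ exactly when~$g$ is convex with maximal domains of linearity the chambers of~$\Fan$. The conditions in~(2) then appear as the consistency of these linear systems (first family) and the strict local convexity of~$g$ across the walls of~$\Fan$ (second family).

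First, since~$\Fan$ is essential and complete, each chamber~$C = \R_{\ge0}\rays$ is a full-dimensional pointed cone, so~$\rays$ spans~$\R^n$ and the system $\dotprod{\ray}{\b{x}} = \b{h}_{\ray}$ ($\ray \in \rays$) has at most one solution. By the Fredholm alternative it is solvable if and only if $\sum_{\ray \in \rays} \gamma_{\ray} \b{h}_{\ray} = 0$ for every linear dependence $\sum_{\ray \in \rays} \gamma_{\ray} \ray = 0$, which is exactly the first family of conditions in~(2) for~$C$. Conversely, if~$\Fan$ is the normal fan of~$P_\b{h}$, its facets are in bijection with the rays of~$\Fan$ and those through the vertex~$\b{v}_C$ with normal cone~$C$ are indexed by~$\rays$, so $\dotprod{\ray}{\b{v}_C} = \b{h}_{\ray}$ for~$\ray \in \rays$, and pairing any linear dependence among the rays of~$\rays$ against~$\b{v}_C$ yields the first family for free. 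Either way, for each chamber~$C = \R_{\ge0}\rays$ we obtain a well-defined point~$\b{v}_C$ with $\dotprod{\ray}{\b{v}_C} = \b{h}_{\ray}$ for all~$\ray \in \rays$.

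Assuming the first family holds, define $g(\b{x}) \eqdef \dotprod{\b{v}_C}{\b{x}}$ for~$\b{x}$ in the chamber~$C$. This is well-defined and continuous, as two adjacent chambers~$C = \R_{\ge0}\rays$ and~$D = \R_{\ge0}\rays[S]$ share the wall~$\R_{\ge0}(\rays \cap \rays[S])$, on whose linear span the forms~$\dotprod{\b{v}_C}{\,\cdot\,}$ and~$\dotprod{\b{v}_D}{\,\cdot\,}$ agree (both equal~$\b{h}_{\ray[t]}$ at each~$\ray[t] \in \rays \cap \rays[S]$). If~$g$ is convex, then $g = \max_C \dotprod{\b{v}_C}{\,\cdot\,}$ is the support function of $\conv\set{\b{v}_C}{C \text{ chamber of } \Fan}$, and this polytope equals~$P_\b{h}$ since $g(\ray) = \b{h}_{\ray}$ at every ray~$\ray$ of~$\Fan$; conversely, if~$\Fan$ is the normal fan of~$P_\b{h}$, then the support function of~$P_\b{h}$ is convex and restricts to~$\dotprod{\b{v}_C}{\,\cdot\,}$ on each chamber~$C$. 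Since the normal fan of a polytope is the fan of maximal domains of linearity of its support function, $\Fan$ is therefore the normal fan of~$P_\b{h}$ if and only if~$g$ is convex with maximal domains of linearity exactly the chambers of~$\Fan$; and because~$\Fan$ is complete, the classical local-to-global criterion reduces this to the strict convexity of~$g$ across every wall. Finally, strict convexity of~$g$ across the wall between~$C = \R_{\ge0}\rays$ and~$D = \R_{\ge0}\rays[S]$ amounts to $\dotprod{\ray[s]}{\b{v}_C} < \b{h}_{\ray[s]}$ for every~$\ray[s] \in \rays[S] \ssm \rays$, together with the symmetric inequalities exchanging the roles of~$C$ and~$D$; and pairing any dependence $\alpha \, \ray + \beta \, \ray[s] + \sum_{\ray[t] \in \rays \cap \rays[S]} \gamma_{\ray[t]} \, \ray[t] = 0$ with $\alpha, \beta > 0$ against~$\b{v}_C$, using $\dotprod{\ray}{\b{v}_C} = \b{h}_{\ray}$ and $\dotprod{\ray[t]}{\b{v}_C} = \b{h}_{\ray[t]}$, gives
\[
\alpha \, \b{h}_{\ray} + \beta \, \b{h}_{\ray[s]} + \sum_{\ray[t] \in \rays \cap \rays[S]} \gamma_{\ray[t]} \, \b{h}_{\ray[t]} = \beta \, \big( \b{h}_{\ray[s]} - \dotprod{\ray[s]}{\b{v}_C} \big),
\]
which is positive if and only if~$\dotprod{\ray[s]}{\b{v}_C} < \b{h}_{\ray[s]}$, independently of the chosen dependence. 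This identifies the second family of conditions in~(2) with the strict local convexity of~$g$ across the walls, which, together with the first family, gives the equivalence of~(1) and~(2).

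The features I expect to require genuine care are the non-simplicial ones. A chamber may have strictly more rays than its dimension, so its defining linear system is overdetermined, which is why the first family of conditions must be read as the consistency condition for these systems and is not automatic. And a wall need not be simplicial, so there is a whole linear space of admissible dependences $\alpha \, \ray + \beta \, \ray[s] + \sum \gamma_{\ray[t]} \, \ray[t] = 0$ rather than a single one up to scaling; the displayed computation is precisely what shows that all of them encode the same inequality $\dotprod{\ray[s]}{\b{v}_C} < \b{h}_{\ray[s]}$, so that the second family is not inconsistent. The only non-elementary ingredient is the local-to-global criterion for convexity of a piecewise-linear function on a complete fan, which is classical.
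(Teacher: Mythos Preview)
The paper does not actually give a proof of this proposition: it is stated as a known result with a reference to \cite{Meyer}. Your argument is correct and is precisely the standard approach one finds in the literature --- build the candidate vertex~$\b{v}_C$ for each chamber via the linear system $\dotprod{\ray}{\b{x}} = \b{h}_{\ray}$, interpret the first family of conditions as Fredholm consistency, assemble the piecewise-linear support function~$g$, and reduce to the local convexity criterion across walls. Your displayed identity
\[
\alpha \, \b{h}_{\ray} + \beta \, \b{h}_{\ray[s]} + \sum_{\ray[t] \in \rays \cap \rays[S]} \gamma_{\ray[t]} \, \b{h}_{\ray[t]} = \beta \, \big( \b{h}_{\ray[s]} - \dotprod{\ray[s]}{\b{v}_C} \big)
\]
is the right way to see that the whole affine space of admissible dependences collapses to a single geometric inequality, and your closing paragraph correctly isolates the two places where non-simpliciality matters. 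The only step you leave implicit is that~$P_\b{h}$ coincides with~$\conv\set{\b{v}_C}{C \text{ chamber}}$ once~$g$ is convex; this follows because both polytopes have the same support function (equal to~$g$), but it would not hurt to say so explicitly.
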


%%%%%%%%%%%

\subsection{Geometry of lattice quotients of the weak order}
\label{subsec:latticeCongruences}

We now recall the combinatorial and geometric toolbox to deal with lattice quotients of the weak order on~$\fS_n$.
The presentation and pictures are borrowed from~\cite{PilaudSantos-quotientopes}.

\subsubsection{Weak order, braid fan, and permutahedron}

\begin{figure}[b]
	\capstart
	\centerline{\includegraphics[scale=.6]{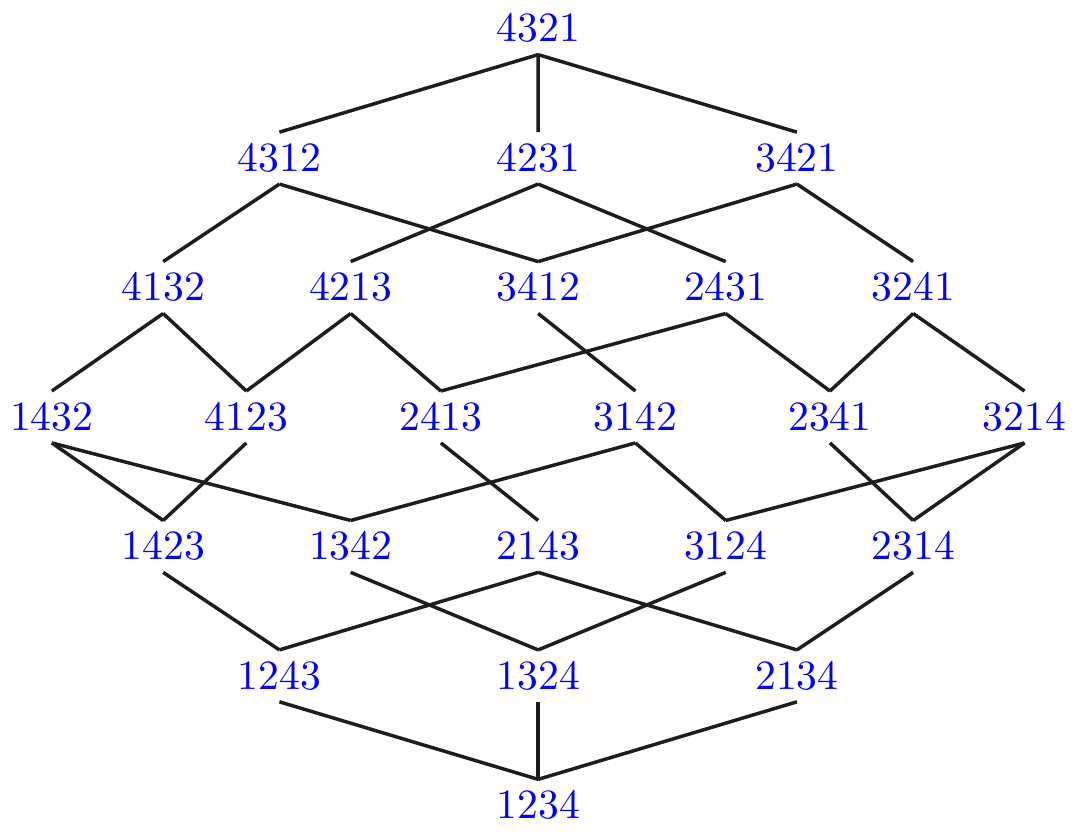} \; \includegraphics[scale=.6]{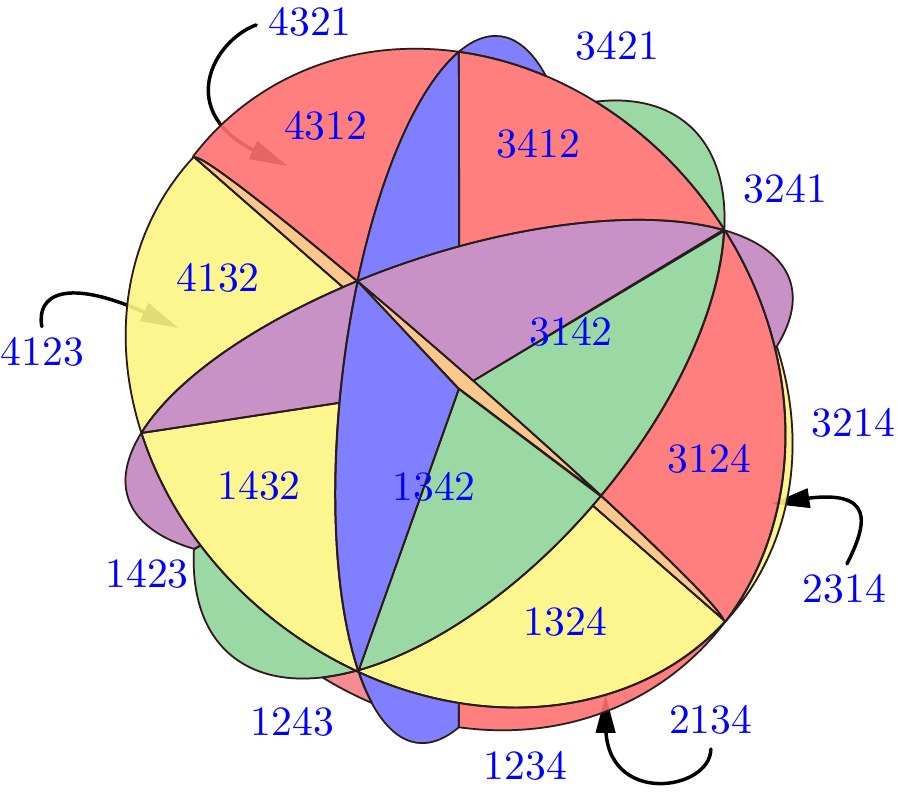} \; \includegraphics[scale=.6]{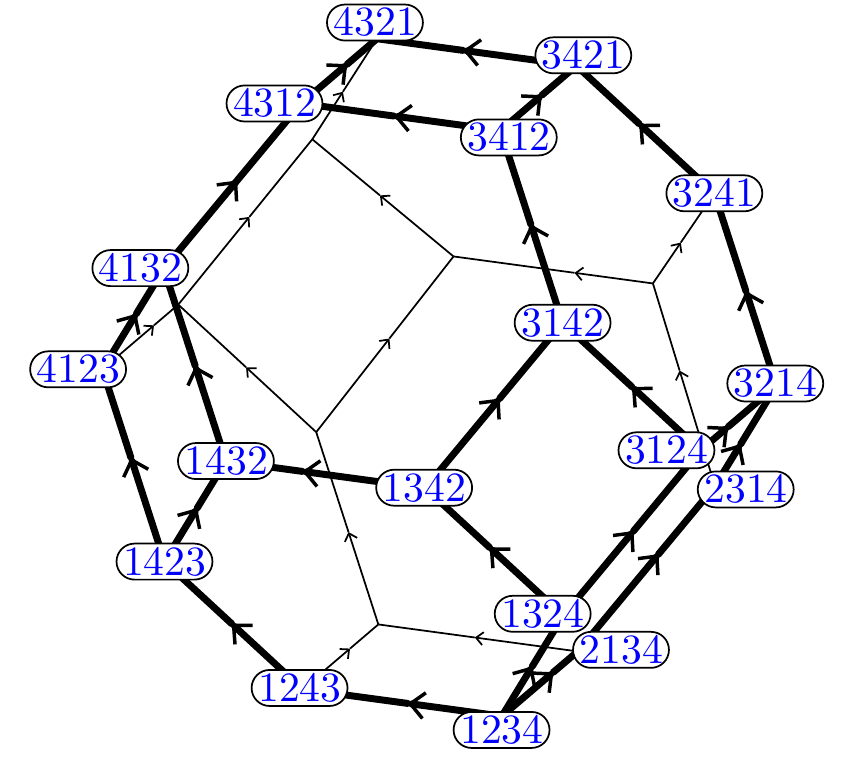}}
	\caption{The Hasse diagram of the weak order on~$\fS_4$ (left) can be seen as the dual graph of the braid fan~$\Fan_4$ (middle) or as the graph of the permutahedron~$\Perm[4]$ (right). \cite[Fig.~1]{PilaudSantos-quotientopes}}
	\label{fig:weakOrder4}
\end{figure}

Let~$\fS_n$ denote the set of permutations of the set~$[n] \eqdef \{1, \dots, n\}$.
We consider the \defn{weak order} on~$\fS_n$ defined by ${\sigma \le \tau \iff \inv(\sigma) \subseteq \inv(\tau)}$ where~$\inv(\sigma) \eqdef \set{(\sigma_i, \sigma_j)}{1 \le i < j \le n \text{ and } \sigma_i > \sigma_j}$ is the \defn{inversion set} of~$\sigma$.
See \cref{fig:weakOrder4}\,(left).
It can be interpreted geometrically on the braid fan~$\Fan_n$ or the permutahedron~$\Perm$ defined~below.

The \defn{braid arrangement} is the set~$\HA_n$ of hyperplanes~$\set{\b{x} \in \R^n}{\b{x}_i = \b{x}_j}$ for ${1 \le i < j \le n}$.
As all hyperplanes of~$\HA_n$ contain the line~$\R \one \eqdef \R(1,1,\dots,1)$, we restrict to the hyperplane ${\hyp \eqdef \bigset{\b{x} \in \R^n}{\sum_{i \in [n]} \b{x}_i = 0}}$.
The hyperplanes of~$\HA_n$ divide~$\hyp$ into chambers, which are the maximal cones of a complete simplicial fan~$\Fan_n$, called \defn{braid fan}.
It has 
\begin{itemize}
\item a chamber~$C(\sigma) \eqdef \set{\b{x} \in \hyp}{\b{x}_{\sigma_1} \le \b{x}_{\sigma_2} \le \dots \le \b{x}_{\sigma_n} }$ for each permutation~$\sigma$ of~$\fS_n$, 
\item a ray~$C(I) \eqdef \set{\b{x} \in \hyp}{\b{x}_{i_1} = \dots = \b{x}_{i_p} \le \b{x}_{j_1} = \dots = \b{x}_{j_{n-p}}}$ for each subset~$\varnothing \ne I \subsetneq [n]$, where~$I = \{i_1, \dots, i_p\}$ and~$[n] \ssm I = \{j_1, \dots, j_{n-p}\}$. When needed, we use the representative vector~$\ray(I) \eqdef |I| \one - n \one_I$ in~$C(I)$, where~$\one \eqdef \sum_{i \in [n]} \b{e}_i$ and~$\one_I \eqdef \sum_{i \in I} \b{e}_i$. (Note that the advantage of~$\ray(I)$ over~$\one_I$ is that it belongs to the hyperplane~$\hyp$ and leads to a simple formula for the support function of the classical permutahedron.)
\end{itemize}
The chamber~$C(\sigma)$ has rays~$C(\sigma([k]))$ for~$k \in [n]$.
See Figures~\ref{fig:weakOrder4}\,(middle), \ref{fig:shards3}\,(left) and \ref{fig:shards4}\,(left) where the chambers are labeled in blue and the rays are labeled in red.

\enlargethispage{.7cm}
The \defn{permutahedron} is the polytope~$\Perm$ defined equivalently as:
\begin{itemize}
\item the convex hull of the points~$\sum_{i \in [n]} i \, \b{e}_{\sigma_i}$ for all permutations~$\sigma \in \fS_n$,
\item the intersection of the hyperplane~$\Hyp \eqdef \bigset{\b{x} \in \R^n}{\sum_{i \in [n]} \b{x}_i = \binom{n+1}{2}}$ with the halfspaces $\bigset{\b{x} \in \R^n}{\sum_{i \in I} \b{x}_i \ge \binom{|I|+1}{2}}$ for all proper subsets~${\varnothing \ne I \subsetneq [n]}$.\end{itemize}
See \cref{fig:weakOrder4}\,(right).
This standard facet description is equivalent to~${\dotprod{\ray(I)}{\b{x}} \le n|I|(n-|I|)/2}$ for all proper subsets~${\varnothing \ne I \subsetneq [n]}$, which matches the conventions of \cref{subsec:typeCones}.

The normal fan of the permutahedron~$\Perm$ is the braid fan~$\Fan_n$.
The Hasse diagram of the weak order on~$\fS_n$ can be seen geometrically as the dual graph of the braid fan~$\Fan_n$, or the graph of the permutahedron~$\Perm$, oriented in the linear direction~$\b{\alpha} \eqdef \sum_{i \in [n]} (2i-n-1) \, \b{e}_i = (-n+1, -n+3, \dots, n-3, n-1)$.
See \cref{fig:weakOrder4}.

\subsubsection{Lattice congruences and quotient fans}

A \defn{lattice congruence} of a lattice~$(L,\le,\meet,\join)$ is an equivalence relation on~$L$ that respects the meet and the join operations, \ie such that $x \equiv x'$ and~$y \equiv y'$ implies $x \meet y \, \equiv \, x' \meet y'$ and~$x \join y \, \equiv \, x' \join y'$.
It defines a \defn{lattice quotient}~$L/{\equiv}$ on the congruence classes of~$\equiv$ where~$X \le Y$ if and only if there exist~$x \in X$ and~$y \in Y$ such that~$x \le y$, and~$X \meet Y$ (resp.~$X \join Y$) is the congruence class of~$x \meet y$ (resp.~$x \join y$) for any~$x \in X$~and~$y \in Y$.

\begin{example}
\label{exm:sylvesterCongruence}
The prototype lattice congruence of the weak order is the \defn{sylvester congruence}~$\equiv_\textrm{sylv}$, see~\cite{LodayRonco, HivertNovelliThibon-algebraBinarySearchTrees}.
Its congruence classes are the fibers of the binary search tree insertion algorithm, or equivalently the sets of linear extensions of binary trees (labeled in inorder and considered as posets oriented from bottom to top).
It can also be seen as the transitive closure of the rewriting rule~$U i k V j W \equiv_\textrm{sylv} U k i V j W$ for some letters~$i < j < k$ and words~$U,V,W$ on~$[n]$.
The quotient of the weak order by the sylvester congruence is (isomorphic to) the classical \defn{Tamari lattice}~\cite{Tamari}, whose elements are the binary trees on~$n$ nodes and whose cover relations are rotations in binary trees.
The sylvester congruence and the Tamari lattice are illustrated in \cref{fig:Tamari4} for~$n = 4$.
We will use the sylvester congruence and the Tamari lattice as a familiar example throughout~the~paper.

\begin{figure}[b]
	\capstart
	\centerline{\includegraphics[scale=.6]{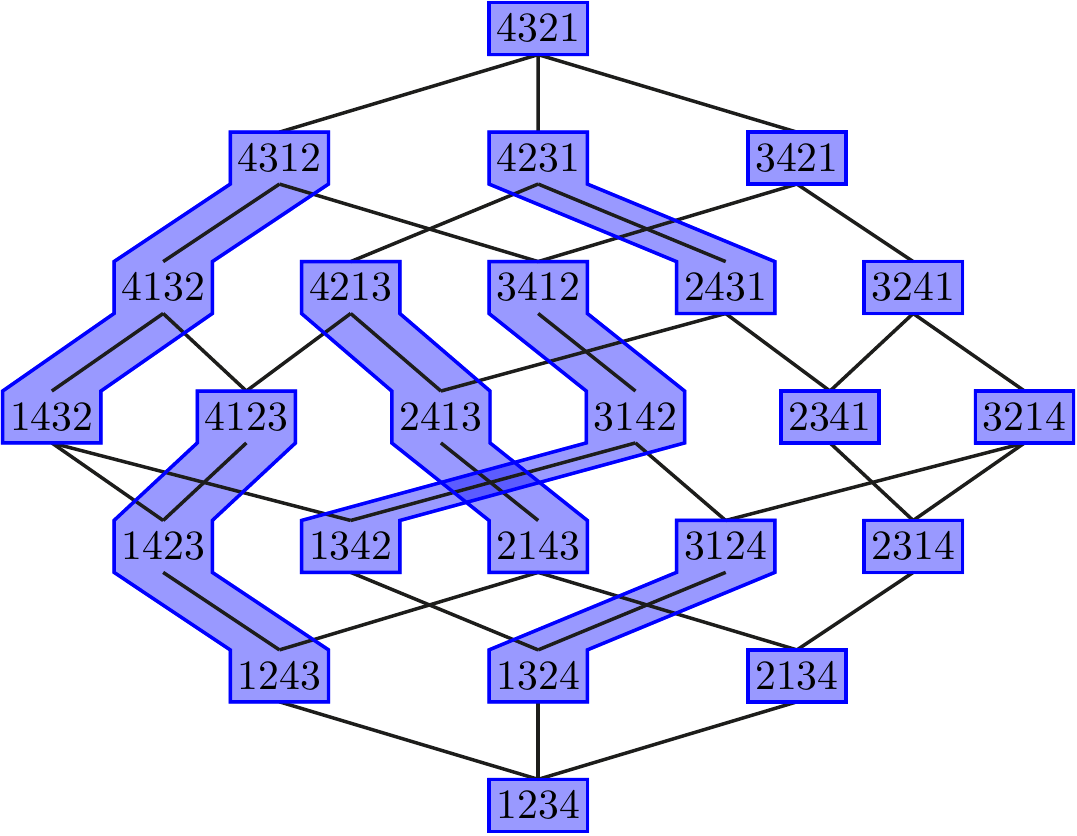} \; \includegraphics[scale=.48]{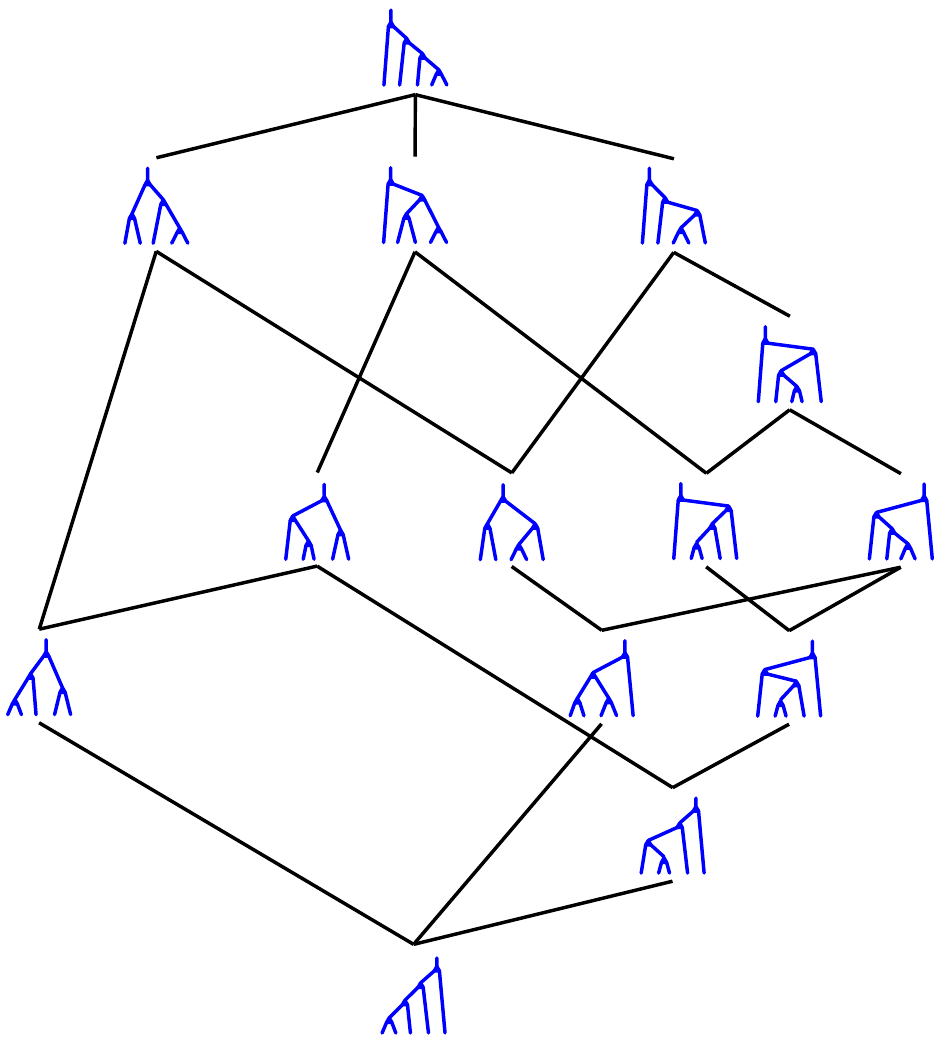} \hspace{-.5cm} \includegraphics[scale=.6]{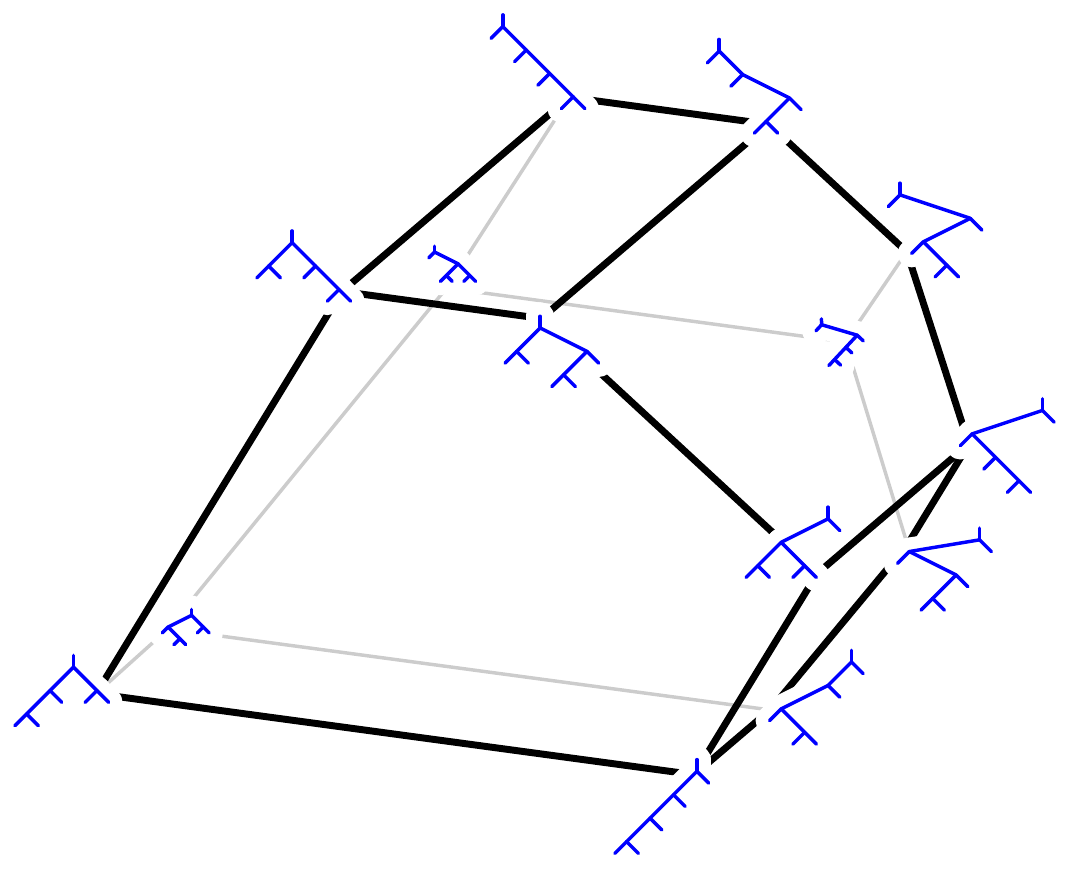}}
	\caption{The quotient of the weak order by the sylvester congruence~$\equiv_\textrm{sylv}$~(left) is the Tamari lattice (middle) and is realized by J.-L.~Loday's associahedron (right). \cite[Fig.~2]{PilaudSantos-quotientopes}}
	\label{fig:Tamari4}
\end{figure}
\end{example}

Lattice congruences naturally yield quotient fans, which turn out to be polytopal.

\begin{theorem}[\cite{Reading-HopfAlgebras}]
\label{thm:quotientFanGlueing}
Any lattice congruence~$\equiv$ of the weak order on~$\fS_n$ defines a complete fan~$\Fan_\equiv$, called \defn{quotient fan}, whose chambers are obtained gluing together the chambers~$C(\sigma)$ of the braid fan~$\Fan_n$ corresponding to permutations~$\sigma$ that belong to the same congruence class of~$\equiv$.
\end{theorem}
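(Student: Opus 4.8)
Write $U_Y \eqdef \bigcup_{\sigma \in Y} \overline{C(\sigma)}$ for a congruence class $Y$ of $\equiv$; the claim is that the cones $U_Y$, together with all their faces, form a complete fan. The plan is to exhibit $U_Y$ as a chamber of the arrangement cut out by the \emph{shards} of $\HA_n$ surviving $\equiv$, and to prove that this arrangement is a fan by induction on the number of surviving shards.

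I would start from two ingredients. The first is purely lattice-theoretic: in any finite lattice the classes of a congruence are intervals, so each class $Y$ of $\equiv$ is the weak-order interval $[\projDown(Y), \projUp(Y)]$; in particular $Y$ is connected in the Hasse diagram of the weak order, the region $U_Y$ is closed and connected, and the $U_Y$ tile $\hyp$ because the closed chambers $\overline{C(\sigma)}$ do, so completeness of the candidate fan is automatic and only the fan property is at stake. The second is the theory of shards: with respect to the base chamber $C(\identity)$, each hyperplane $\{\b{x}_a = \b{x}_c\}$ of $\HA_n$ (with $a < c$) is cut by the hyperplanes $\{\b{x}_a = \b{x}_b\}$ and $\{\b{x}_b = \b{x}_c\}$ for $a < b < c$ into relatively open pieces called shards, every wall of $\Fan_n$ lies on a unique shard, and a lattice congruence contracts either all or none of the cover relations crossing a given shard. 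This lets one define the set $\shards_\equiv$ of surviving shards — those separating inequivalent permutations — and then $U_Y$ is exactly the closure of a connected component of $\hyp \ssm \bigcup \shards_\equiv$, while the candidate quotient fan $\Fan_\equiv$ is the family of these closed components and all their faces.

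The heart of the proof is that $\Fan_\equiv$ is a fan, and this is where one really uses that $\equiv$ is a \emph{lattice} congruence: $\shards_\equiv$ is an order ideal for the forcing order on shards, so that if a shard $\shard \subseteq \{\b{x}_a = \b{x}_c\}$ survives, then so do all shards of the cutting hyperplanes $\{\b{x}_a = \b{x}_b\}$ and $\{\b{x}_b = \b{x}_c\}$ that meet $\shard$. I would then induct on $|\shards_\equiv|$ over forcing-closed shard sets: removing a forcing-minimal $\shard$ leaves a forcing-closed set whose arrangement is, by induction, a complete fan with convex chambers $C'$; forcing-closedness then forces enough cutting shards that the hyperplane of $\shard$ is not further subdivided inside any $C'$, so $\shard$ meets the interior of each $C'$ in a single hyperplane slice and hence either misses $C'$ or splits it into two convex cones, with no reflex created along a codimension~$2$ face. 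Adding back all of $\shards_\equiv$ thus yields a complete fan whose chambers are the convex cones $U_Y$. An alternative, more elementary but more computational route verifies the fan property locally, codimension~$2$ cone by codimension~$2$ cone: the chambers of $\Fan_n$ through such a cone are indexed by a weak-order interval isomorphic to $\fS_2 \times \fS_2$ (a diamond) or $\fS_3$ (a hexagon), $\equiv$ restricts to a lattice congruence of that interval, and a finite inspection of the congruences of those two lattices — using that the weak order on $\fS_3$ contains $N_5$ — shows the induced coarsening of the link of the cone is a $1$-dimensional fan; since a closed connected locally convex set is convex (Tietze), this again gives that the $U_Y$ are convex cones forming, with their faces, the fan $\Fan_\equiv$. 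The main obstacle, in either presentation, is this last step — forcing-closedness of $\shards_\equiv$, equivalently the case analysis over congruences of $\fS_3$ — the combinatorial content that separates genuine lattice congruences from arbitrary mergings of chambers; everything else is soft.
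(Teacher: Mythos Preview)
The paper does not prove \cref{thm:quotientFanGlueing}; it is quoted from Reading's work~\cite{Reading-HopfAlgebras} without argument, so there is no ``paper's own proof'' to compare against. Your sketch is a reasonable reconstruction of the standard argument and captures the two usual routes: the shard-induction approach and the local codimension-$2$ analysis combined with Tietze-type convexity. Both are viable, and you correctly identify the crux --- that forcing-closedness of the surviving shard set is what prevents reflex angles, which is precisely the content of \cref{thm:shardIdeals}.

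Two small points. First, a terminology mismatch: in the paper's conventions the surviving set~$\shards_\equiv$ is an \emph{upper} ideal of the forcing poset~$(\shards_n,\prec)$, not a lower ideal; your closure condition (``if~$\shard$ survives then so do the cutting shards that meet it'') is the correct one, so this is purely nominal, but it is worth aligning with \cref{thm:shardIdeals}. Second, in your inductive route~(a) the step ``$\shard$ meets the interior of each~$C'$ in a single hyperplane slice'' needs the observation that the relative boundary~$\partial_H\shard$ of~$\shard$ inside its hyperplane~$H$ is contained in $\bigcup_{\shard' \succ \shard} \shard'$, hence in walls of the coarser fan; once this is said, the connectedness of~$H \cap \mathrm{int}(C')$ gives your conclusion. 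The ``no reflex along codimension~$2$'' assertion then follows, but you should also note why an adjacent chamber~$D'$ that is \emph{not} cut by~$\shard$ must lie entirely on one side of~$H$ (otherwise its facet with~$C'$ would meet~$\partial_H\shard$, forcing that facet to lie in a forcing shard and hence on the correct side). Route~(b) avoids this bookkeeping and is closer in spirit to how the result is typically argued.
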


\begin{theorem}[\cite{PilaudSantos-quotientopes}]
\label{thm:quotientopes}
For any lattice congruence~$\equiv$ of the weak order on~$\fS_n$, the quotient fan~$\Fan_\equiv$ is the normal fan of a polytope~$P_\equiv$, called \defn{quotientope}.
\end{theorem}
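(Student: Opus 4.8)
Since $\Fan_\equiv$ coarsens the braid fan~$\Fan_n$ (\cref{thm:quotientFanGlueing}), each of its rays is a ray~$C(I)$ of~$\Fan_n$, so any polytope realizing~$\Fan_\equiv$ has a facet description of the form $\dotprod{\ray(I)}{\b{x}} \le \b{h}_I$ matching the conventions of \cref{subsec:typeCones}, and by \cref{prop:characterizationPolytopalFanNonSimplicial} one could in principle build the height vector~$\b{h}$ by verifying the wall-crossing inequalities one by one. We shall instead exhibit the polytope~$P_\equiv$ directly, as a Minkowski sum of one small polytope per surviving wall of the braid arrangement, and read off~$\b{h}$ as its support vector.

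The combinatorial backbone is Reading's theory of shards. The hyperplanes of the braid arrangement are subdivided into pieces called \emph{shards}, in bijection with the join-irreducible permutations of the weak order; the shards carry a \emph{forcing} preorder~$\preceq$, where $\shard' \preceq \shard$ means that the quotient fan of \emph{every} lattice congruence whose walls include~$\shard$ also has~$\shard'$ among its walls; the walls of~$\Fan_\equiv$ form a set~$\shards_\equiv$ of shards, which is a down-set for~$\preceq$; and the chambers of~$\Fan_\equiv$ are exactly the closures of the connected components of $\hyp \ssm \bigcup \shards_\equiv$. In particular, a fan coarsening~$\Fan_n$ is determined by its set of walls, so $\Fan_\equiv$ is the unique fan coarsening~$\Fan_n$ with wall set~$\shards_\equiv$.

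The key geometric input is, for each shard~$\shard$, a \emph{shard polytope}~$\mathsf{SP}(\shard)$: a small deformed permutahedron whose vertices can be written down directly from the combinatorics of~$\shard$ (equivalently of the corresponding join-irreducible), with the property that the normal fan of~$\mathsf{SP}(\shard)$ is the coarsening of~$\Fan_n$ whose wall set is the principal down-set $\set{\shard'}{\shard' \preceq \shard}$ generated by~$\shard$. Granting this, set $P_\equiv \eqdef \sum_{\shard \in \shards_\equiv} \mathsf{SP}(\shard)$. Support functions are additive under Minkowski sums, so the normal fan of~$P_\equiv$ is the common refinement of the normal fans of the summands; a piece of a shard~$\shard'$ is a wall of this refinement if and only if it is a wall of some summand, that is, if and only if $\shard' \preceq \shard$ for some~$\shard \in \shards_\equiv$, that is, if and only if $\shard' \in \shards_\equiv$, because $\shards_\equiv$ is a down-set. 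Hence the normal fan of~$P_\equiv$ coarsens~$\Fan_n$ and has wall set~$\shards_\equiv$, so it equals~$\Fan_\equiv$; in particular its support vector $\b{h}_I \eqdef \max_{\b{x} \in P_\equiv} \dotprod{\ray(I)}{\b{x}}$ lies in $\typeCone(\Fan_\equiv)$, proving the theorem.

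The main obstacle is exactly the shard polytope construction, and above all the identification of its normal fan with the principal down-set of~$\shard$: one must compute, for the explicit polytope~$\mathsf{SP}(\shard)$, along precisely which pieces of which braid hyperplanes $\b{x}_a = \b{x}_b$ its support function changes linear form, and then match this list against the forcing preorder. This is a finite, purely combinatorial verification, but it is the technical heart of the argument; once it is in place, everything else follows formally from additivity of support functions and the down-set property of~$\shards_\equiv$. An alternative that sidesteps an explicit shard polytope is to take $\b{h} = \sum_{\shard \in \shards_\equiv} \lambda_\shard \, \b{h}^{\shard}$ for suitable elementary height vectors~$\b{h}^{\shard}$ and weights $\lambda_\shard > 0$ decaying fast enough along~$\preceq$, so that across each surviving wall the strictly convex contribution of its own shard dominates the possibly concave contributions of the shards that force it; producing such weights is then the crux.
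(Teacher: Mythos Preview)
The paper does not prove this theorem; it merely quotes it from~\cite{PilaudSantos-quotientopes} (and, in the introduction, also from~\cite{PadrolPilaudRitter}). So there is no ``paper's own proof'' to compare against. That said, your sketch is a faithful outline of the argument in~\cite{PadrolPilaudRitter}: build one shard polytope~$\mathsf{SP}(\shard)$ per shard, take~$P_\equiv$ to be the Minkowski sum over~$\shards_\equiv$, and use that normal fans of Minkowski sums are common refinements together with the ideal property of~$\shards_\equiv$. Your ``alternative'' paragraph is exactly the original strategy of~\cite{PilaudSantos-quotientopes}, where one writes down explicit contribution functions~$\b{h}^\shard$ and chooses forcing-dominant weights~$\lambda_\shard$ so that the strict contribution of each surviving shard outweighs the contributions of the shards it forces. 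Both routes are valid; the Minkowski-sum route is cleaner because no domination argument is needed.

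Two small caveats on your sketch. First, your~$\preceq$ is the reverse of the paper's~$\prec$ (here $\shards_\equiv$ is an \emph{upper} ideal of~$(\shards_n,\prec)$, see \cref{thm:shardIdeals}); this is only a convention mismatch and your argument is internally consistent. Second, the property you attribute to~$\mathsf{SP}(\shard)$ is slightly stronger than what is actually proved in~\cite{PadrolPilaudRitter}: one shows that $\shard$ itself is a wall of the normal fan of~$\mathsf{SP}(\shard)$, and that every wall of this normal fan is a shard lying weakly above~$\shard$ in the forcing order, but not that \emph{every} such shard occurs. This weaker statement is exactly what your Minkowski-sum argument needs, so nothing is lost.
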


By construction, the Hasse diagram of the quotient of the weak order by~${\equiv}$ is given by the dual graph of the quotient fan~$\Fan_\equiv$, or by the graph of the quotientope~$P_\equiv$, oriented in the direction~$\b{\alpha}$.

\begin{example}
For the sylvester congruence~$\equiv_\textrm{sylv}$ of \cref{exm:sylvesterCongruence}, the quotient fan~$\Fan_\textrm{sylv}$ has
\begin{itemize}
\item a chamber $C(T) = \set{\b{x} \in \hyp}{\b{x}_i \le \b{x}_j \text{ if } i \text{ is a descendant of } j \text{ in } T}$ for each binary tree~$T$, 
\item a ray~$C(I)$ for each proper interval~$I = [i,j] \subsetneq [n]$.
\end{itemize}
\cref{fig:shards3,fig:shards4} (right) illustrate the quotient fans~$\Fan_\textrm{sylv}$ for~$n = 3$ and~$n = 4$.
The quotient fan~$\Fan_\textrm{sylv}$ is the normal fan of the classical associahedron~$\Asso$ defined equivalently as:
\begin{itemize}
\item the convex hull of the points~$\sum_{j \in [n]} \ell(T,j) \, r(T,j) \, \b{e}_j$ for all binary trees~$T$ on~$n$ nodes, where $\ell(T,j)$ and~$r(T,j)$ respectively denote the numbers of leaves in the left and right subtrees of the node $j$ of~$T$ (labeled in inorder), see~\cite{Loday},
\item the intersection of the hyperplane~$\Hyp$ with the halfspaces~$\bigset{\b{x} \in \R^n}{\sum_{i \le k \le j} \b{x}_k \ge \binom{j-i+2}{2}}$ for all intervals~$1 \le i \le j \le n$, see~\cite{ShniderSternberg}.
\end{itemize}
See \cref{fig:Tamari4} (right).
\end{example}

\subsubsection{Shards}

An alternative description of the quotient fan~$\Fan_\equiv$ defined in \cref{thm:quotientFanGlueing} is given by its walls, each of which can be seen as the union of some preserved walls of the braid arrangement.
The conditions in the definition of lattice congruences impose strong constraints on the set of preserved walls: deleting some walls forces to delete others.
Shards were introduced by N.~Reading in~\cite{Reading-posetRegions} (see also~\cite{Reading-PosetRegionsChapter, Reading-FiniteCoxeterGroupsChapter}) to understand the possible sets of preserved walls. 

For any~$1 \le i < j \le n$, let~$[i,j] \eqdef \{i, \dots, j\}$ and~${]i,j[} \eqdef \{i+1, \dots, j-1\}$.
For any~$S \subseteq {]i,j[}$, the \defn{shard}~$\shard(i,j,S)$ is the cone
\[
\shard(i,j,S) \eqdef \set{\b{x} \in \R^n}{\b{x}_i = \b{x}_j, \; \b{x}_i \ge \b{x}_h \text{ for all } h \in S, \; \b{x}_i \le \b{x}_k \text{ for all } k \in {]i,j[} \ssm S}.
\]
The \defn{length} of~$\shard(i,j,S)$ is~$j-i$.
Denote by~$\shards_n \eqdef \set{\shard(i,j,S)}{1 \le i < j \le n \text{ and } S \subseteq {]i,j[}}$ the set of all shards of~$\HA_n$.

Throughout the paper, we use a convenient notation for shards borrowed from N.~Reading's work on arc diagrams~\cite{Reading-arcDiagrams}: we consider $n$ dots on the horizontal axis, and we represent the shard~$\shard(i,j,S)$ by an arc joining the $i$th dot to the $j$th dot and passing above (resp.~below) the $k$th dot when~$k \in S$ (resp.~when~$k \notin S$).
For instance, the arc~\raisebox{-.16cm}{\includegraphics[scale=.8]{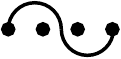}} represents the shard~$\shard(1,4,\{2\})$.
We say that~$\shard(i,j,S)$ is an \defn{up} (resp.~\defn{down}) shard if the corresponding arc passes above (resp.~below) all points of~$]i,j[$, that is, if~$S = {]i,j[}$ (resp.~$S = \varnothing$).
We say that~$\shard(i,j,S)$ is \defn{mixed} if the corresponding arc crosses the horizontal axis, that is, if~$\varnothing \ne S \subsetneq {]i,j[}$.

\begin{figure}[b]
	\capstart
	\centerline{\includegraphics[scale=.9]{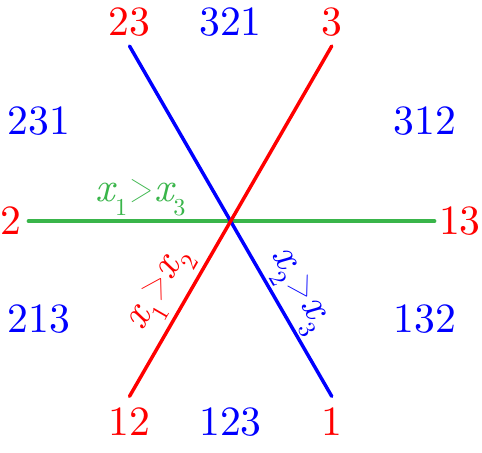} \qquad \includegraphics[scale=.9]{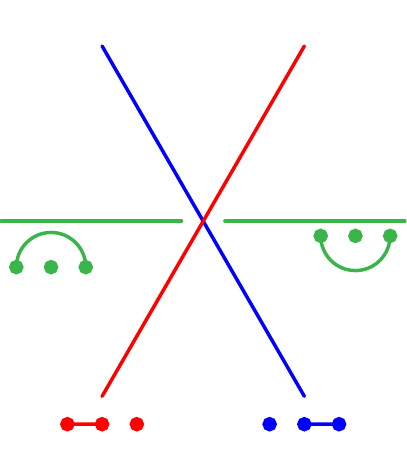} \qquad \includegraphics[scale=.9]{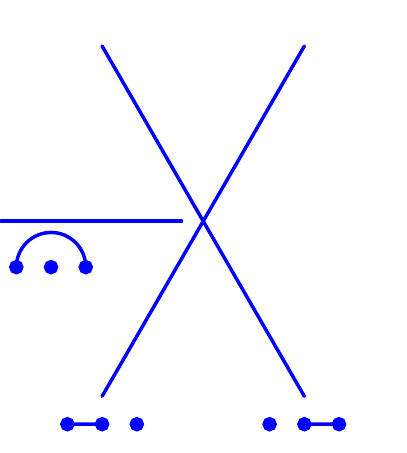}}
	\caption{The braid fan~$\Fan_3$ (left), the corresponding shards (middle), and the quotient fan of the sylvester congruence~$\equiv_\textrm{sylv}$~(right). \cite[Fig.~3]{PilaudSantos-quotientopes}}
	\label{fig:shards3}
\end{figure}

\begin{figure}
	\capstart
	\centerline{\includegraphics[scale=.5]{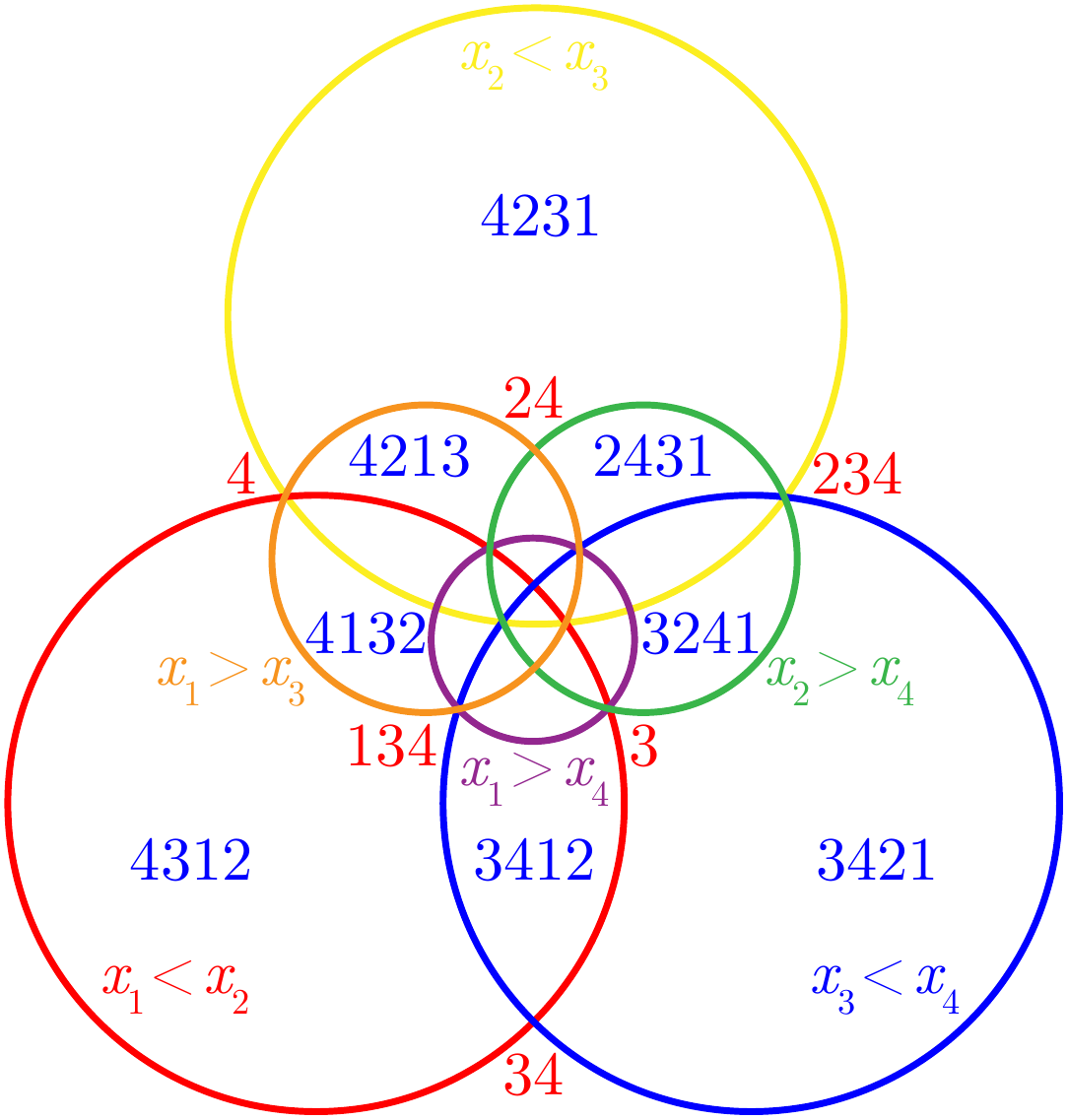} \; \includegraphics[scale=.5]{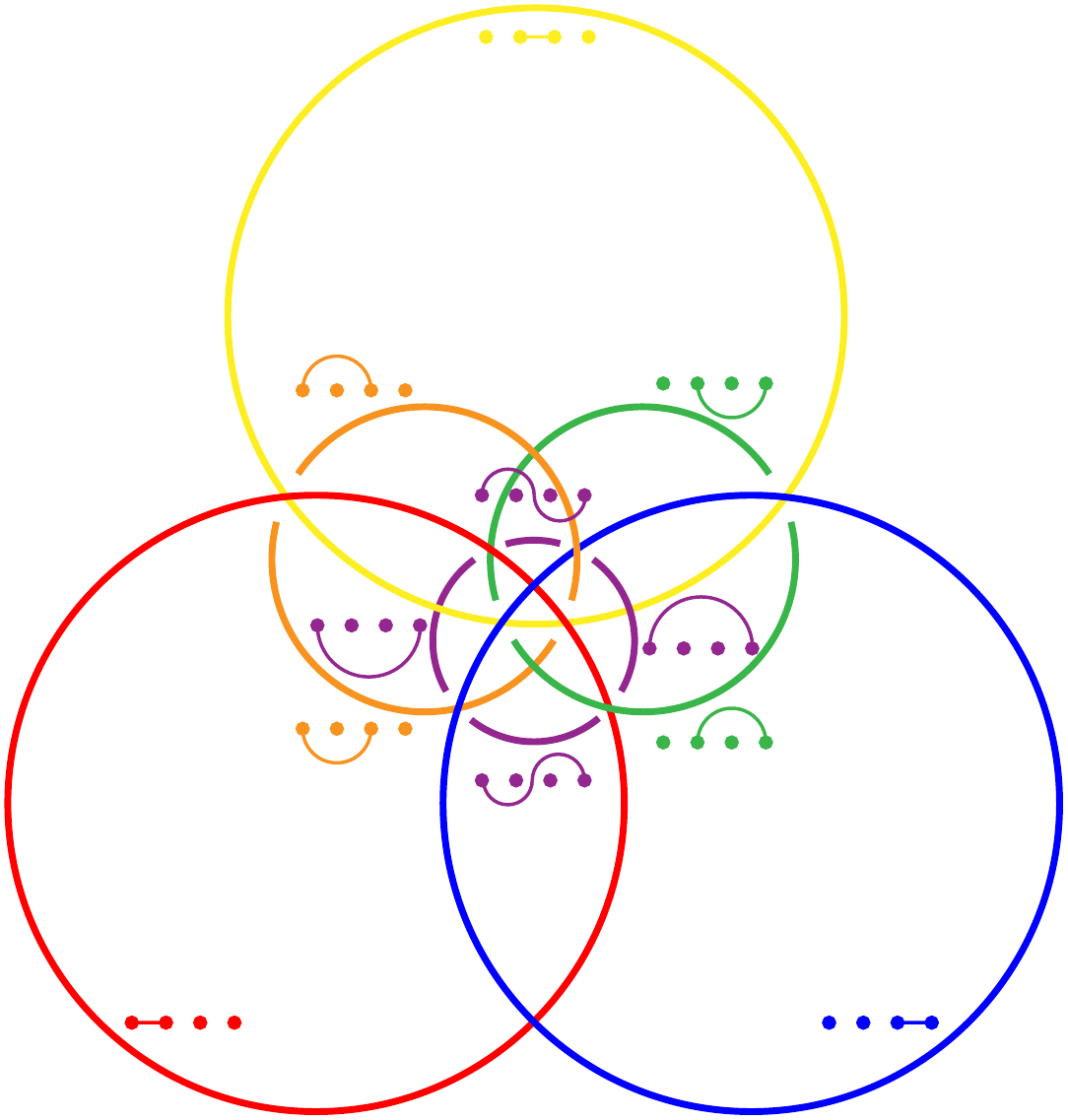} \; \includegraphics[scale=.5]{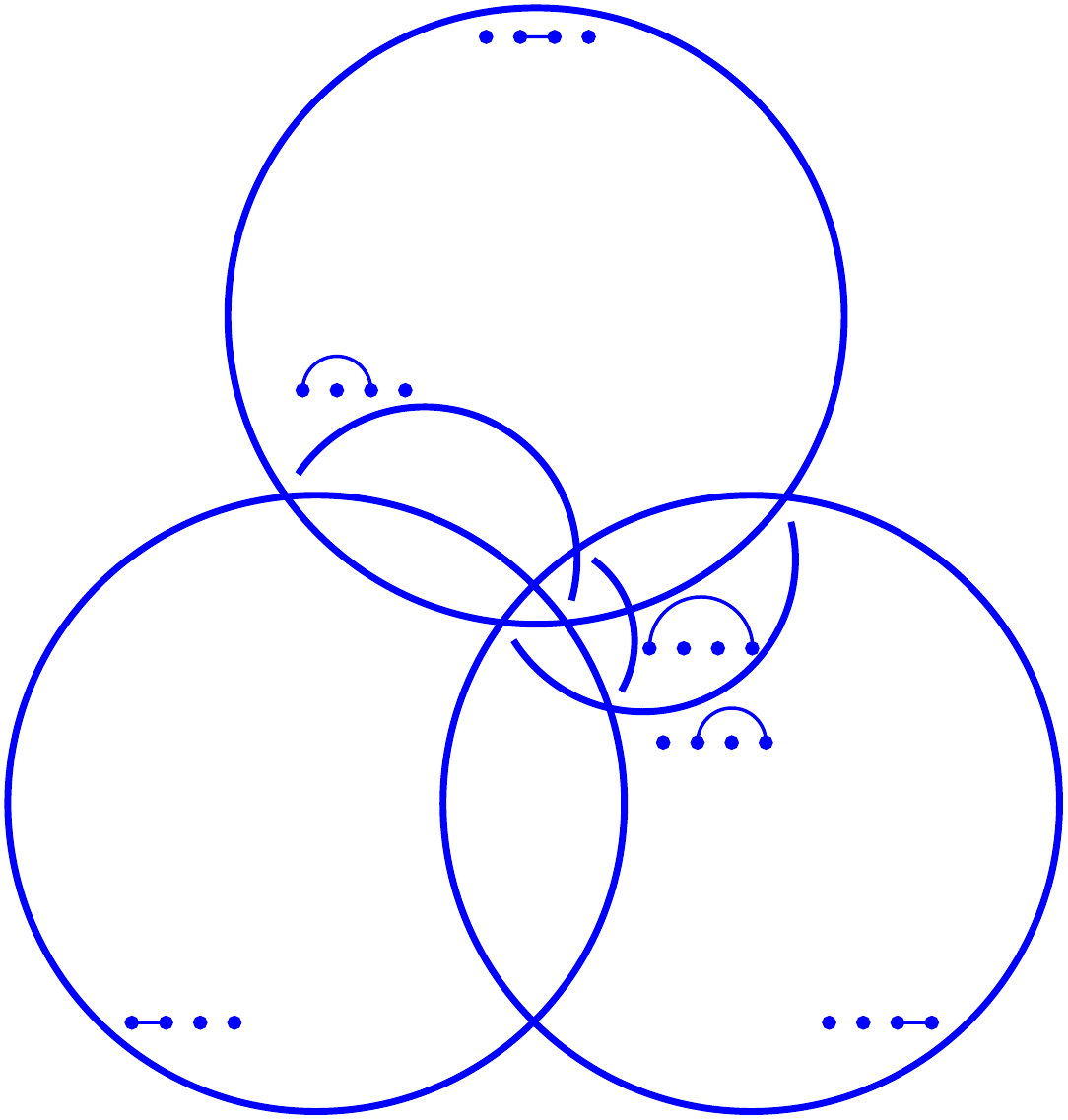}}
	\caption{A stereographic projection of the braid fan~$\Fan_4$ (left) from the pole~$4321$, the corresponding shards (middle), and the quotient fan of the sylvester congruence~$\equiv_\textrm{sylv}$~(right). \mbox{\cite[Fig.~4]{PilaudSantos-quotientopes}}}
	\label{fig:shards4}
\end{figure}

\cref{fig:shards3,fig:shards4} illustrate the braid fans~$\Fan_n$ and their shards~$\shards_n$ when~$n = 3$ and~$n = 4$ respectively.
As the $3$-dimensional fan~$\Fan_4$ is difficult to visualize (as in \cref{fig:weakOrder4}\,(middle)), we use another classical representation in \cref{fig:shards4}\,(left): we intersect~$\Fan_4$ with a unit sphere and we stereographically project the resulting arrangement of great circles from the pole~$4321$ to the plane.
Each circle then corresponds to a hyperplane~$\b{x}_i = \b{x}_j$ with~$i < j$, separating a disk where~$\b{x}_i < \b{x}_j$ from an unbounded region where~$\b{x}_i > \b{x}_j$.
In both \cref{fig:shards3,fig:shards4}, the left picture shows the braid fan~$\Fan_n$ (where chambers are labeled with blue permutations of~$[n]$ and rays are labeled with red proper subsets of~$[n]$), the middle picture shows the shards~$\shards_n$ (labeled by arcs), and the right picture represents the quotient fan~$\Fan_\textrm{sylv}$ by the sylvester congruence.

It turns out that the shards are precisely the pieces of the hyperplanes of~$\HA_n$ needed to delimit the cones of the quotient fan~$\Fan_\equiv$.

\begin{theorem}[{\cite[Sect.~10.5]{Reading-FiniteCoxeterGroupsChapter}}]
\label{thm:quotientFanShards}
	For any lattice congruence~$\equiv$ of the weak order on~$\fS_n$, there is a subset of shards~$\shards_\equiv \subseteq \shards_n$ such that the interiors of the chambers of the fan~$\Fan_\equiv$ are precisely the connected components of~$\hyp \ssm \bigcup \shards_\equiv$.
\end{theorem}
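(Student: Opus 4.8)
The plan is to work inside the braid arrangement~$\HA_n$ and to show that the walls of the quotient fan~$\Fan_\equiv$ form a union of shards. Recall the standard bijection between the walls of the braid fan~$\Fan_n$ and the cover relations of the weak order: a cover~$\sigma \lessdot \tau$, where~$\tau = \sigma s$ for the adjacent transposition~$s$ exchanging the two values~$i < j$ sitting in consecutive positions of~$\sigma$, corresponds to the wall~$W(\sigma,\tau) \eqdef C(\sigma) \cap C(\tau)$, which lies in the hyperplane~$\{\b{x}_i = \b{x}_j\}$ of~$\HA_n$, and in fact in the relative interior of the unique shard~$\shard(i,j,S)$, where~$S$ records on which side of the exchanged pair each value of~${]i,j[}$ sits in~$\sigma$; conversely, each shard is the union of the closures of the walls it contains. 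By~\cref{thm:quotientFanGlueing}, the chambers of~$\Fan_\equiv$ are obtained by gluing the braid chambers~$C(\sigma)$ across the walls~$W(\sigma,\tau)$ with~$\sigma \equiv \tau$, so the interiors of the chambers of~$\Fan_\equiv$ are exactly the connected components of~$\hyp \ssm \bigcup \set{W(\sigma,\tau)}{\sigma \lessdot \tau \text{ and } \sigma \not\equiv \tau}$, the union being over the \emph{preserved} walls. Hence it suffices to prove that the set of preserved walls is saturated for shards: if two covers have their walls in one and the same shard, then either both are contracted by~$\equiv$ or neither is. Granting this, set~$\shards_\equiv \eqdef \set{\shard \in \shards_n}{\text{the covers with wall in }\shard\text{ are not contracted by }\equiv}$; then~$\bigcup\shards_\equiv$ is exactly the union of the preserved walls, and the statement follows.

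To prove shard saturation, I would show that the ``contracted or not'' status of a cover is constant along the walls of a shard, using that~$\equiv$ respects the meet and the join together with a local analysis of the $2$-faces of the permutahedron~$\Perm$. The $2$-faces of~$\Perm$ are squares~$\{\mu, \mu s_k, \mu s_l, \mu s_k s_l\}$ with~$|k-l|\ge 2$ and hexagons coming from a braid relation~$s_k s_{k+1} s_k = s_{k+1} s_k s_{k+1}$. Call a $2$-face \emph{harmless} if it is a square, or a hexagon on three values of the form~$\{i,j,v\}$ with~$v \notin {]i,j[}$; the key combinatorial point is that two covers have their walls in the same shard if and only if they can be joined by a sequence of covers in which any two consecutive ones are opposite edges of a common harmless $2$-face. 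On the one hand, crossing a harmless $2$-face does not move any value of~${]i,j[}$ from one side of the exchanged pair~$\{i,j\}$ to the other, so it keeps the shard fixed; on the other hand, for opposite edges~$e = (\sigma \lessdot \tau)$ and~$e' = (\sigma' \lessdot \tau')$ of such a face, joining~$\sigma \equiv \tau$ with a suitable third vertex of the face (and, symmetrically, meeting~$\sigma' \equiv \tau'$ with a fourth vertex) and using that~$\equiv$ is a lattice congruence gives~$\sigma \equiv \tau \iff \sigma' \equiv \tau'$. Combining the two, contraction depends only on the shard, as desired.

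The main obstacle is the combinatorial connectivity claim, which hinges on an explicit description, in terms of the one-line notation of~$\sigma$, of the shard~$\shard(i,j,S)$ attached to a cover: one must check that the walls of~$\Fan_n$ inside~$\shard(i,j,S)$ are exactly the~$W(\sigma,\tau)$ whose exchanged values are~$i$ and~$j$ and in which each~$h \in {]i,j[}$ lies on the side of the~$\{i,j\}$-block dictated by whether~$h \in S$, and then that any two of these are related by harmless $2$-face moves (reordering the values on each side of the block, which are square moves, and shuttling the values outside~$[i,j]$ across the block, which are hexagons on~$\{i,j,v\}$), whereas the remaining hexagons --- those on three values~$\{i,h,j\}$ with~$i < h < j$ --- are precisely the $2$-faces whose two~$\{i,j\}$-edges lie on opposite sides of a facet~$\{\b{x}_i = \b{x}_j = \b{x}_h\}$ of~$\shard(i,j,S)$, hence in the two distinct shards~$\shard(i,j,S)$ and~$\shard(i,j, S \symdif \{h\})$. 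This is precisely Reading's shard picture, rephrased through the arc notation of~\cref{subsec:latticeCongruences} and visible already in the defining inequalities of~$\shard(i,j,S)$ recalled above. Once it is in place, assembling the pieces --- the preserved walls form a union of shards, so removing them from~$\hyp$ leaves the same connected components as removing~$\bigcup\shards_\equiv$, namely the interiors of the chambers of~$\Fan_\equiv$ --- completes the proof.
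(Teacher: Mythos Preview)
The paper does not prove \cref{thm:quotientFanShards}: it is quoted without argument from Reading's survey~\cite[Sect.~10.5]{Reading-FiniteCoxeterGroupsChapter}, so there is no in-paper proof to compare against. Your outline is essentially Reading's own argument (the polygon property of lattice congruences combined with the explicit description of which wall of~$\Fan_n$ lies in which shard), and it is correct.

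One point worth making explicit, since it is the heart of the matter: in a hexagon on three values~$a<b<c$, the two edges lying in the hyperplane~$\b{x}_a=\b{x}_c$ are exactly the \emph{middle} edges of the hexagon, while the edges in~$\b{x}_a=\b{x}_b$ and in~$\b{x}_b=\b{x}_c$ are each a bottom edge on one side and a top edge on the other. Your ``harmless'' hexagons (those on~$\{i,j,v\}$ with~$v \notin {]i,j[}$) are therefore precisely the hexagons in which the two~$\{i,j\}$-edges are \emph{not} the middle pair, and for such a pair the join/meet trick you describe does go through (e.g.\ contracting the bottom~$\{i,j\}$-edge and joining with the opposite bottom vertex yields contraction of the top~$\{i,j\}$-edge, and meeting gives the converse). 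By contrast, when~$v=h\in{]i,j[}$ the two~$\{i,j\}$-edges are the middle edges, the direct join/meet computation collapses to a triviality, and indeed those two walls sit in the distinct shards~$\shard(i,j,S)$ and~$\shard(i,j,S\symdif\{h\})$. Stating this dichotomy sharply would make your ``shard saturation'' step airtight. The connectivity claim (any two walls in a common shard are linked by harmless $2$-face moves) is the remaining combinatorial check; your sketch via reordering each side of the~$ij$-block by square moves and shuttling values~$v\notin[i,j]$ across it by harmless hexagons is the right one and goes through without difficulty.
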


\begin{figure}[b]
	\capstart
	\centerline{\includegraphics[scale=.7,valign=c]{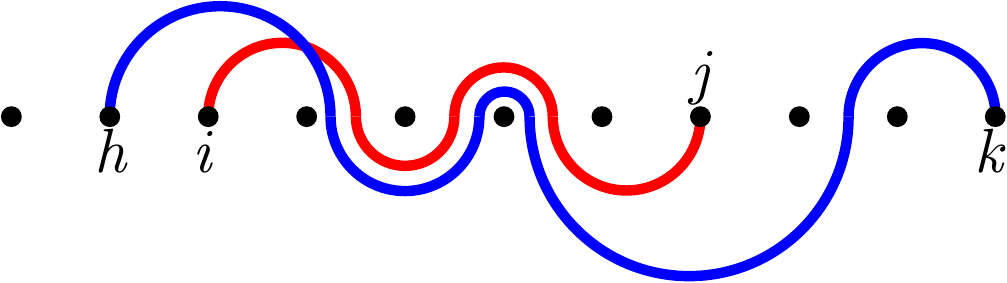} \hspace{1cm} \includegraphics[scale=.7,valign=c]{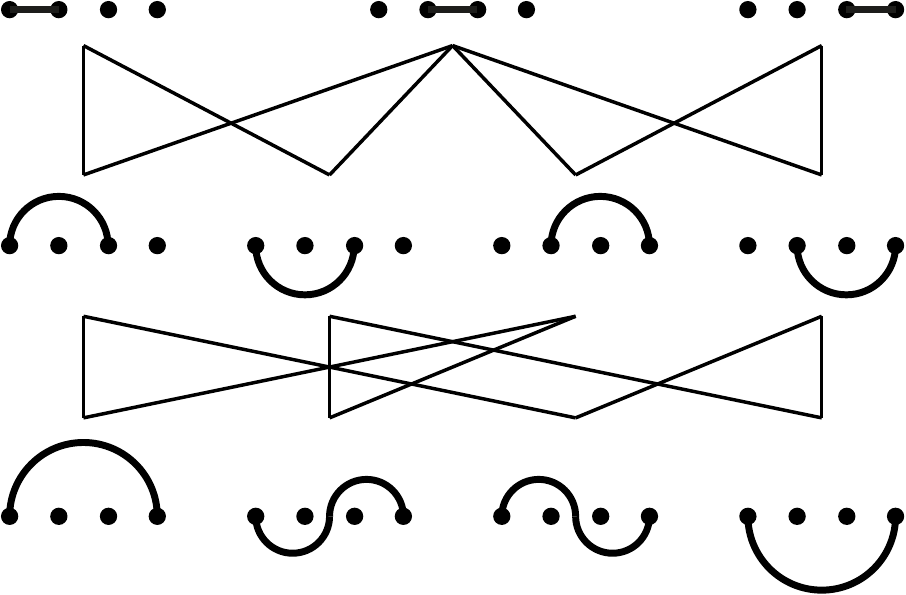}}
	\caption{The forcing relation on shards (left) and the shard poset for~$n = 4$ (right). The red shard~$\shard(i,j,S)$ forces the blue shard~$\shard(h,k,T)$ since~$h \le i < j \le k$ and~$S = T \cap {]i,j[}$. \mbox{\cite[Fig.~5]{PilaudSantos-quotientopes}}}
	\label{fig:forcingOrder}
\end{figure}

Finally, we can describe the set of lattice congruences of the weak order on~$\fS_n$ using the following poset on the shards~$\shards_n$.
A shard~$\shard(i,j,S)$ is said to \defn{force} a shard~$\shard(h,k,T)$ if~$h \le i < j \le k$ and~$S = T \cap {]i,j[}$.
We denote this relation by~$\shard(i,j,S) \succ \shard(h,k,T)$.
In terms of the corresponding arcs, the arc~$\alpha$ of~$\shard(i,j,S)$ is a subarc of the arc~$\beta$ of~$\shard(h,k,T)$, meaning that the endpoints of~$\alpha$ are in between the endpoints of~$\beta$, and the arc~$\alpha$ agrees with the arc~$\beta$ between~$i$ and~$j$.
We call \defn{shard poset} the poset~$(\shards_n, \prec)$ of all shards ordered by forcing.
The forcing relation and the shard poset on~$\shards_4$ are illustrated on \cref{fig:forcingOrder}.

\begin{theorem}[{\cite[Sect.~10.5]{Reading-FiniteCoxeterGroupsChapter}}]
\label{thm:shardIdeals}
The map~${\equiv} \mapsto \shards_\equiv$ is a bijection between the lattice congruences of the weak order on~$\fS_n$ and the upper ideals of the shard poset~$(\shards_n,\prec)$.
\end{theorem}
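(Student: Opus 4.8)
The plan is to obtain the bijection from the general Birkhoff correspondence for the (distributive) congruence lattice of a finite lattice, fed by two structural inputs about the weak order: it is congruence uniform, and its join-irreducible elements are canonically labelled by shards. First, by the Funayama--Nakayama theorem the congruence lattice $\Cong(\fS_n)$ is distributive, so Birkhoff's representation theorem identifies it with the lattice of order ideals of the poset $\JI(\Cong(\fS_n))$ of join-irreducible congruences; equivalently, a congruence $\equiv$ is freely determined by the order filter $\set{\theta \in \JI(\Cong(\fS_n))}{\theta \not\le {\equiv}}$ of join-irreducible congruences it does \emph{not} contract.

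Second, I would invoke (or reprove) that the weak order on $\fS_n$ is congruence uniform \cite{Reading-FiniteCoxeterGroupsChapter}, so that $\gamma \mapsto \con(\gamma)$ --- sending a join-irreducible permutation $\gamma$ to the least congruence contracting the unique cover relation below it --- is a bijection from the join-irreducible elements of $\fS_n$ onto $\JI(\Cong(\fS_n))$. A permutation is join-irreducible precisely when it has a single descent, and I would make explicit the bijection between such permutations and triples $(i,j,S)$ with $1 \le i < j \le n$ and $S \subseteq {]i,j[}$: the descent exchanges the values $i$ and $j$, while $S$ records the values strictly between $i$ and $j$ lying on the lower side of the descent. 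This matches the join-irreducible permutations with $\shards_n$. It then remains to check that, under this labelling, the preserved shards of $\con(\gamma)$ afforded by \cref{thm:quotientFanShards} are exactly $\shards_n \ssm \set{\shard(h,k,T)}{\shard(\gamma) \succ \shard(h,k,T)}$; concretely, that $\con(\gamma)$ merges the two chambers across a wall of the braid fan if and only if that wall lies in a shard forced by $\shard(\gamma)$.

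Third, comparing the two orders: the previous step shows $\con(\gamma_1) \ge \con(\gamma_2)$ whenever $\shard(\gamma_1) \succ \shard(\gamma_2)$, and the converse --- that no further comparabilities arise --- would follow by exhibiting, for each pair of shards incomparable for $\prec$, a lattice congruence contracting the join-irreducible of the first but not that of the second. Having identified $(\JI(\Cong(\fS_n)), \le)$ with $(\shards_n, \prec)$, \cref{thm:quotientFanShards} yields $\shards_\equiv = \set{\shard(\gamma)}{\con(\gamma) \not\le {\equiv}}$, so $\equiv \mapsto \shards_\equiv$ is the composite of the Birkhoff bijection with this poset isomorphism and is therefore a bijection onto the order filters, i.e.\ the upper ideals, of $(\shards_n, \prec)$.

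The main obstacle is the second step: establishing congruence uniformity of the weak order and pinning down the precise dictionary $\gamma \leftrightarrow \shard(\gamma)$ together with the list of cover relations contracted by $\con(\gamma)$; the third step is combinatorially light but must be checked in both directions. If one prefers to avoid the abstract lattice machinery, there is a parallel geometric route: prove directly that $\shards_\equiv$ is always an upper ideal (the geometric meaning of forcing --- a lattice congruence that glues the two chambers across a wall must glue across every wall it forces), that $\equiv$ is recovered from $\shards_\equiv$ because $\Fan_\equiv$ records exactly which adjacent braid chambers are glued and hence the congruence on cover relations, and that every upper ideal of $\shards_n$ comes from a lattice congruence, by induction on the ideal; in that approach the surjectivity step --- verifying that the glued partition is compatible with meets and joins --- is the delicate part.
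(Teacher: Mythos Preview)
The paper does not prove this statement: \cref{thm:shardIdeals} is quoted as a known result from~\cite[Sect.~10.5]{Reading-FiniteCoxeterGroupsChapter} and is used as background material, so there is no proof in the paper to compare your proposal against. Your outline is in fact a faithful sketch of Reading's original argument --- distributivity of the congruence lattice via Funayama--Nakayama, congruence uniformity of the weak order to identify $\JI(\Cong(\fS_n))$ with the join-irreducible permutations (hence with shards), and identification of the induced order with the forcing order --- so it is the right approach, just not one that the present paper carries out.

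One small caution on your sketch: in the third step you correctly flag that both directions of the order comparison must be checked, but the direction ``$\con(\gamma_1) \ge \con(\gamma_2)$ implies $\shard(\gamma_1) \succ \shard(\gamma_2)$'' is the substantive one, and your proposed verification (exhibiting separating congruences for $\prec$-incomparable shards) is circular if those congruences are themselves constructed via the bijection you are trying to establish. In Reading's treatment this direction is handled geometrically, by analysing how hyperplanes cut one another and tracking which cover relations are forced to collapse; you would need to reproduce that analysis rather than appeal to the existence of suitable congruences.
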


\begin{example}
For the sylvester congruence~$\equiv_\textrm{sylv}$, the corresponding shard ideal is the ideal~$\shards_\textrm{sylv} = \set{\shard(i, j, ]i,j[)}{1 \le i < j \le n}$ of all up shards, \ie those whose corresponding arcs pass above all dots in between their endpoints.
\cref{fig:shards3,fig:shards4}\,(right) represent the quotient fans~$\Fan_{\equiv_\textrm{sylv}}$ corresponding to the sylvester congruences~$\equiv_\textrm{sylv}$ on~$\fS_3$ and~$\fS_4$ respectively.
It is obtained 
\begin{itemize}
\item either by gluing the chambers~$C(\sigma)$ of the permutations~$\sigma$ in the same sylvester class,
\item or by cutting the space with the shards of~$\shards_{\equiv_\textrm{sylv}} = \set{\shard(i, j, ]i,j[)}{1 \le i < j \le 4}$.
\end{itemize}
\end{example}

To conclude these recollections on lattice congruences of the weak order, let us recall that the quotient fan~$\Fan_\equiv$ is essential if and only if the identity permutation is alone in its $\equiv$-congruence class, or equivalently if~$\shards_\equiv$ contains all basic shards~$\shard(i, i+1, \varnothing)$ for~$i \in [n-1]$ (this follows \eg from~\cite[Thm.~6.9]{Reading-latticeCongruences}).
We say that such a congruence is \defn{essential}.
If~$\shards_\equiv$ does not contain the shard~$\shard(i, i+1, \varnothing)$, then the quotient~$\fS_n/{\equiv}$ is isomorphic to the Cartesian product of the quotients~$\fS_i/{\equiv'}$ and~$\fS_{n-i}/{\equiv''}$ where~$\equiv'$ and~$\equiv''$ are the restrictions of~$\equiv$ to~$[1,i]$ and~$[i+1,n]$ respectively.
Any lattice congruence can thus be understood from its essential restrictions and we therefore focus on essential congruences.

%%%%%%%%%%%

\subsection{Deformed permutahedra and removahedra}
\label{subsec:deformedPermutahedraRemovahedra}

This section discusses the type cone of the braid fan~$\Fan_n$.
As they belong to the deformation cone of the permutahedron, we call the resulting polytopes \defn{deformed permutahedra}.
We also discuss a subfamily of specific deformed permutahedra called \defn{removahedra} as they are obtained by deleting facets (instead of moving them).

\subsubsection{Linear dependences in the braid fan}

We start with classical considerations on the geometry of the braid fan.
Remember that we have chosen a representative vector~$\ray(I) \eqdef |I|\one - n\one_I$ for the ray corresponding to each proper subset~$\varnothing \ne I \subsetneq [n]$ (where~$\one \eqdef \sum_{i \in [n]} \b{e}_i$ and~$\one_I \eqdef \sum_{i \in I} \b{e}_i$).
We also set~$\ray(\varnothing) = \ray([n]) = 0$ by convention.

\begin{lemma}
\label{lem:linearDependence1}
For any two proper subsets~$\varnothing \ne I, J \subsetneq [n]$, the representative vectors satisfy the linear dependence~$\ray(I) + \ray(J) = \ray(I \cap J) + \ray(I \cup J)$.
\end{lemma}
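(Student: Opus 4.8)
The plan is to verify the identity by a direct computation from the explicit representative vectors, after first observing that the chosen formula extends consistently to the degenerate cases. Indeed, if we set $\ray(K) \eqdef |K|\,\one - n\,\one_K$ for \emph{every} subset $K \subseteq [n]$, then $\ray(\varnothing) = 0\cdot\one - n\cdot\b{0} = 0$ and $\ray([n]) = n\,\one - n\,\one = 0$, which matches the convention $\ray(\varnothing) = \ray([n]) = 0$ fixed above. Hence it suffices to prove the identity for this uniform formula, with no case distinction on whether $I \cap J$ is empty or $I \cup J$ equals~$[n]$.

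First I would record the two elementary set-theoretic identities on which everything rests. On the one hand, inclusion--exclusion for cardinalities gives $|I| + |J| = |I \cap J| + |I \cup J|$. On the other hand, for the indicator vectors we have the pointwise identity $\one_I + \one_J = \one_{I \cap J} + \one_{I \cup J}$: for each coordinate $i \in [n]$, both sides count the number of sets among $\{I, J\}$ (equivalently, among $\{I\cap J,\, I\cup J\}$) that contain~$i$, which is $2$ if $i \in I \cap J$, is $1$ if $i$ lies in exactly one of $I$ and $J$, and is $0$ otherwise.

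Then I would simply combine these:
\[
\ray(I) + \ray(J) = (|I| + |J|)\,\one - n\,(\one_I + \one_J) = (|I\cap J| + |I\cup J|)\,\one - n\,(\one_{I\cap J} + \one_{I\cup J}) = \ray(I\cap J) + \ray(I\cup J).
\]
There is no real obstacle here; the only point requiring a little care is the bookkeeping of the degenerate cases, which is why I would dispatch it first by checking that the formula $|K|\,\one - n\,\one_K$ reproduces the convention $\ray(\varnothing) = \ray([n]) = 0$. Once that is done, the lemma is immediate, and it even holds as an identity in all of~$\R^n$ (not merely in the hyperplane~$\hyp$), since both set identities used are coordinatewise.
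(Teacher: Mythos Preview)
Your proof is correct and is exactly the direct verification the paper implicitly relies on; in fact the paper states \cref{lem:linearDependence1} without proof, treating it as immediate from the definition~$\ray(K) = |K|\one - n\one_K$ and the convention $\ray(\varnothing) = \ray([n]) = 0$. Your careful handling of the degenerate cases and the two underlying identities (inclusion--exclusion for cardinalities and for indicator vectors) simply spells out what the paper leaves to the reader.
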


\begin{lemma}
\label{lem:linearDependence2}
Let~$\sigma, \tau$ be two adjacent permutations. Let~$\varnothing \ne I \subsetneq [n]$ (resp.~$\varnothing \ne J \subsetneq [n]$) be such that~$\ray(I)$ (resp.~$\ray(J)$) is the ray of~$C(\sigma)$ not in~$C(\tau)$ (resp.~of~$C(\tau)$ not in~$C(\sigma)$). Then the linear dependence among the rays of the cones~$C(\sigma)$ and~$C(\tau)$ is~$\ray(I) + \ray(J) = \ray(I \cap J) + \ray(I \cup J)$.
\end{lemma}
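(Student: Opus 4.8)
The plan is to reduce everything to the combinatorics of the adjacent transposition relating~$\sigma$ and~$\tau$ and then invoke \cref{lem:linearDependence1}. Saying that $\sigma$ and $\tau$ are adjacent means that the chambers~$C(\sigma)$ and~$C(\tau)$ share a facet of~$\Fan_n$, equivalently that $\sigma$ and $\tau$ differ by an adjacent transposition: there exist~$k \in [n-1]$ and distinct~$a, b \in [n]$ with $\sigma = w_1 \cdots w_{k-1}\, a\, b\, w_{k+2} \cdots w_n$ and $\tau = w_1 \cdots w_{k-1}\, b\, a\, w_{k+2} \cdots w_n$. Since the rays of~$C(\sigma)$ are the~$C(\sigma([m]))$ for~$m \in [n-1]$, and likewise for~$\tau$, we get $\sigma([m]) = \tau([m])$ for all~$m \ne k$, whereas the ray of~$C(\sigma)$ not in~$C(\tau)$ is the ray~$C(\sigma([k]))$, so that $I = \sigma([k]) = \{w_1, \dots, w_{k-1}, a\}$, and similarly $J = \tau([k]) = \{w_1, \dots, w_{k-1}, b\}$.

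Next I would locate the two remaining rays of the desired relation. From the descriptions of~$I$ and~$J$ one reads $I \cap J = \{w_1, \dots, w_{k-1}\} = \sigma([k-1])$ and $I \cup J = \{w_1, \dots, w_{k-1}, a, b\} = \sigma([k+1])$, so $C(I \cap J)$ and $C(I \cup J)$ are common rays of~$C(\sigma)$ and~$C(\tau)$, i.e.\ lie in~$\rays \cap \rays[S]$ (with the convention $\ray(\varnothing) = \ray([n]) = 0$, one of these two terms vanishes when $k = 1$ or $k = n-1$). Applying \cref{lem:linearDependence1} to the proper subsets~$I$ and~$J$ then gives $\ray(I) + \ray(J) = \ray(I \cap J) + \ray(I \cup J)$, a linear dependence supported on $\{\ray(I), \ray(J)\} \cup (\rays \cap \rays[S])$ and hence of exactly the shape appearing in \cref{prop:characterizationPolytopalFan}.

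Finally I would check that this is \emph{the} linear dependence among the rays of~$C(\sigma)$ and~$C(\tau)$, i.e.\ that it is unique up to scaling and correctly normalized. Both cones are simplicial of dimension~$n-1$ inside~$\hyp$, so each of~$\rays$ and~$\rays[S]$ is a basis of~$\hyp$; as $\rays \cup \rays[S] = (\rays \cap \rays[S]) \cup \{\ray(I), \ray(J)\}$ has $n$ elements spanning an $(n-1)$-dimensional space, the space of linear dependences among them is one-dimensional, and in any such dependence both~$\ray(I)$ and~$\ray(J)$ occur with nonzero coefficient (deleting $\ray(I)$, resp.~$\ray(J)$, leaves the basis~$\rays[S]$, resp.~$\rays$). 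In our relation these two coefficients both equal~$1$; in particular they are positive and sum to~$2$, which is precisely the normalization $\coefficient[\ray][\rays][{\rays[S]}] + \coefficient[{\ray[s]}][\rays][{\rays[S]}] = 2$ fixed after \cref{prop:characterizationPolytopalFan}. No step here is a real obstacle; the only mild care needed is the bookkeeping for the boundary cases $k = 1$ and $k = n-1$, where $I \cap J$ or $I \cup J$ is improper and the stated identity still holds verbatim because the corresponding vector is~$0$.
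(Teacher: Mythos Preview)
Your proof is correct. The paper itself does not give a proof of \cref{lem:linearDependence2}: it is stated, together with \cref{lem:linearDependence1}, as one of the ``classical considerations on the geometry of the braid fan'' and is followed only by illustrative examples. Your argument supplies exactly the missing details in the natural way: identify the adjacent transposition at position~$k$, read off $I = \sigma([k])$ and $J = \tau([k])$, observe that $I \cap J = \sigma([k-1])$ and $I \cup J = \sigma([k+1])$ are common rays, invoke \cref{lem:linearDependence1}, and finish with the dimension count for uniqueness. The handling of the degenerate cases $k=1$ and $k=n-1$ via the convention $\ray(\varnothing) = \ray([n]) = 0$ is also correct.
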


For example, the linear dependence among the rays in the adjacent cones~$C(123)$ and~$C(213)$ of~$\Fan_3$ is~$\ray(\{1\}) + \ray(\{2\}) = \ray(\{12\})$, while the linear dependence among the rays in the adjacent cones~$C(123)$ and~$C(132)$ of~$\Fan_3$ is~$\ray(\{1,2\}) + \ray(\{1,3\}) = \ray(\{1\})$. See \fref{fig:shards3}\,(left).
The first non-degenerate linear dependencies (\ie where~$I \cap J \ne \varnothing$ and~$I \cup J \ne [n]$) arise in~$\Fan_4$: for instance, the linear dependence among the rays in the adjacent cones~$C(4132)$ and~$C(4312)$ of~$\Fan_4$ is~$\ray(\{3,4\}) + \ray(\{1,4\}) = \ray(\{4\}) + \ray(\{1,3,4\})$. See \fref{fig:shards4}\,(left).

\subsubsection{Deformed permutahedra}

We now consider the type cone of the braid fan, or in other words the deformation cone of the permutahedron, studied in details in~\cite{Postnikov, PostnikovReinerWilliams}.
The following classical statement is a consequence of \cref{prop:characterizationPolytopalFan,lem:linearDependence2}.
We naturally identify a vector~$\b{h}$ with coordinates indexed by the rays of the braid fan~$\Fan_n$ with a height function~$h : 2^{[n]} \to \R_{\ge0}$ with~$h(\varnothing) = h([n]) = 0$.

\begin{proposition}
The type cone~$\ctypeCone(\Fan_n)$ of the braid fan~$\Fan_n$ (or deformation cone of the permutahedron~$\Perm$) is (isomorphic to) the set of functions~$h : 2^{[n]} \to \R$ satisfying ${h(\varnothing) = h([n]) = 0}$ and the \defn{submodular inequalities} $h(I) + h(J) \ge h(I \cap J) + h(I \cup J)$ for any~${I, J \subseteq [n]}$.
The facets of~$\ctypeCone(\Fan_n)$ correspond to those submodular inequalities where~$|I \ssm J| = |J \ssm I| = 1$.
\end{proposition}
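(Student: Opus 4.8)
The plan is to combine the general description of the type cone from \cref{prop:characterizationPolytopalFan} with the explicit linear dependences in the braid fan provided by \cref{lem:linearDependence1,lem:linearDependence2}. First I would identify the rays of~$\Fan_n$ with the proper nonempty subsets~$\varnothing \ne I \subsetneq [n]$ via~$\ray(I) = |I|\one - n\one_I$, so that a height vector~$\b{h}$ becomes a function~$h : 2^{[n]} \to \R$ with the boundary conditions~$h(\varnothing) = h([n]) = 0$ coming from the convention~$\ray(\varnothing) = \ray([n]) = 0$. The chambers of~$\Fan_n$ are the cones~$C(\sigma)$ for~$\sigma \in \fS_n$, and two chambers~$C(\sigma)$,~$C(\tau)$ are adjacent exactly when~$\sigma$ and~$\tau$ differ by an adjacent transposition. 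By \cref{lem:linearDependence2}, if~$\ray(I)$ is the ray of~$C(\sigma)$ not in~$C(\tau)$ and~$\ray(J)$ the ray of~$C(\tau)$ not in~$C(\sigma)$, then the wall-crossing linear dependence is~$\ray(I) + \ray(J) = \ray(I \cap J) + \ray(I \cup J)$, with the coefficients of~$\ray(I)$ and~$\ray(J)$ both equal to~$1$. Feeding this into the wall-crossing inequality of \cref{prop:characterizationPolytopalFan} yields exactly~$h(I) + h(J) \ge h(I \cap J) + h(I \cup J)$ (after passing to the closure~$\ctypeCone$). This gives one inclusion: $\ctypeCone(\Fan_n)$ is contained in the set of submodular functions.

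Next I would check the reverse inclusion, namely that every submodular function indexed over all pairs~$I, J \subseteq [n]$ is in fact cut out by the subset of those wall-crossing inequalities. The key combinatorial point is to pin down which pairs~$(I,J)$ actually occur in \cref{lem:linearDependence2}: these are precisely the pairs arising from an adjacent transposition, which forces~$I$ and~$J$ to be related by~$I = (I \cap J) \cup \{i\}$ and~$J = (I \cap J) \cup \{j\}$ for a single pair of elements~$i \notin J$, $j \notin I$ — equivalently~$|I \ssm J| = |J \ssm I| = 1$. So the inequalities coming directly from walls of~$\Fan_n$ are exactly the submodular inequalities with~$|I \ssm J| = |J \ssm I| = 1$. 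I would then argue that a general submodular inequality~$h(I) + h(J) \ge h(I \cap J) + h(I \cup J)$ is implied by these "elementary" ones: this is the standard fact that submodularity is equivalent to its restriction to pairs differing in one element on each side, proved by a telescoping induction on~$|I \symdif J|$, repeatedly applying an elementary inequality to move one element at a time from~$I \ssm J$ into~$J$ (keeping the union and intersection controlled). Because these elementary inequalities are both valid on~$\ctypeCone(\Fan_n)$ and, conversely, imply all submodular inequalities, the two cones coincide, and moreover the facet-defining inequalities of~$\ctypeCone(\Fan_n)$ are among the elementary ones.

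It then remains to show that \emph{each} elementary submodular inequality (with~$|I \ssm J| = |J \ssm I| = 1$) is genuinely facet-defining, \ie irredundant. For this I would exhibit, for a fixed such pair~$(I_0, J_0)$, a submodular function~$h$ that is tight only on that inequality and strict on all the others; a convenient choice is a small perturbation of the height function of the permutahedron~$\Perm$ itself (whose height function~$h(I) = n|I|(n-|I|)/2$ lies in the interior of the type cone, satisfying every submodular inequality strictly) by subtracting a tiny multiple of an indicator-type function supported so as to saturate exactly~$h(I_0) + h(J_0) = h(I_0 \cap J_0) + h(I_0 \cup J_0)$. The main obstacle I anticipate is this last irredundancy check: one must be careful that the perturbation does not accidentally saturate or violate a neighbouring elementary inequality, so the construction of the witnessing function (or, alternatively, a dimension count showing the linear span of the tight constraints at such~$h$ has the right codimension) needs to be done with some care. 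Everything else — the translation into submodular inequalities and the reduction to the one-element-difference case — is routine given \cref{prop:characterizationPolytopalFan} and \cref{lem:linearDependence2}.
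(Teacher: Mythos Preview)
Your proposal is correct and follows exactly the approach the paper indicates: the proposition is stated there as a classical consequence of \cref{prop:characterizationPolytopalFan} and \cref{lem:linearDependence2} without a detailed proof, and your outline (identify the wall-crossing inequalities as the elementary submodular inequalities with~$|I \ssm J| = |J \ssm I| = 1$, reduce general submodularity to these by telescoping, then verify irredundancy) is the standard way to fill in those details. One minor expository slip to fix: the wall-crossing inequalities from \cref{lem:linearDependence2} only give the \emph{elementary} submodular inequalities directly, so the inclusion~$\ctypeCone(\Fan_n) \subseteq \{\text{submodular functions}\}$ is the one that requires the telescoping argument you put in your second paragraph, while the reverse inclusion is trivial since the elementary inequalities are a subset of all submodular inequalities.
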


For instance, the height function of the permutahedron~$\Perm$ is given by
\[
h_\circ(I) = \max_{\sigma \in \fS_n} \dotprod{\ray(I)}{\sigma} = |I|n(n+1)/2 - n|I|(|I|+1)/2 = n|I|(n-|I|)/2.
\]
It is clearly submodular since~$h_\circ(I) + h_\circ(J) - h_\circ(I \cap J) - h_\circ(I \cup J) = 2n|I \ssm J||J \ssm I| \ge 0$.

A \defn{deformed permutahedron} is a polytope whose normal fan coarsens the braid fan~$\Fan_n$.
It can be written as~$\Defo \eqdef \bigset{\b{x} \in \Hyp}{\dotprod{\ray(I)}{\b{x}} \le h(I) \text{ for all } \varnothing \ne I \subsetneq [n]}$ for some submodular function~$h : 2^{[n]} \to \R$.
We prefer the term deformed permutahedron to the term generalized permutahedron used by A.~Postnikov in~\cite{Postnikov,PostnikovReinerWilliams} (in particular because it also generalizes to other Coxeter groups~\cite{ArdilaCastilloEurPostnikov}).
Observe in particular that any quotientope~$P_\equiv$ of \cref{thm:quotientopes} is a deformed permutahedron since the quotient fan~$\Fan_\equiv$ coarsens the braid fan~$\Fan_n$ by definition.

\subsubsection{Removahedra}

A \defn{removahedron} is a deformed permutahedron obtained by deleting inequalities in the facet description of the permutahedron~$\Perm$.
In other words, it can be written as~$\Remo \eqdef \bigset{\b{x} \in \Hyp}{\dotprod{\ray(I)}{\b{x}} \le h_\circ(I) \text{ for all } I \in \cI}$ for a subset~$\cI$ of proper subsets of~$[n]$.
Examples of removahedra include the permutahedron~$\Perm$ itself (remove no inequality), the associahedron~$\Asso$ (remove the inequalities that do not correspond to intervals), the graph associahedron~$\Asso[G]$~\cite{CarrDevadoss, Devadoss} if and only if the graph~$G$ is chordful~\cite{Pilaud-removahedra}, and the permutreehedra~\cite{PilaudPons-permutrees} described below.

We say that a fan~$\Fan[G]$ with rays~$\set{\ray(I)}{I \in \cI}$ is \defn{removahedral} if $\Fan[G]$ is the normal fan of the removahedron~$\Remo$.
We say that a lattice congruence~$\equiv$ of the weak order is removahedral if its quotient fan~$\Fan_\equiv$ is.
The following example shows that some lattice congruence are not removahedral.

\begin{example}
Consider the congruence~$\equiv$ of~$\fS_4$ whose shard ideal is~$\shards_\equiv = \shards_4 \ssm \{\shard(1,4,\{2\})\}$.
Since the only removed shard contains no ray in its interior, the rays of the quotient fan~$\Fan_\equiv$ are all rays of the braid fan~$\Fan_\equiv$, so that the corresponding removahedron is the permutahedron~$\Perm[4]$ which does not realize~$\Fan_\equiv$.
\end{example}

%%%%%%%%%%%

\subsection{Permutrees}
\label{subsec:permutrees}

We now recall the permutrees of~\cite{PilaudPons-permutrees} that generalize the binary trees and will be especially important in this paper.
The presentation and pictures are borrowed from~\cite{PilaudPons-permutrees}.

\subsubsection{Combinatorics of permutrees}

In an oriented tree~$T$, we call \defn{parents} (resp.~\defn{children}) of a node~$j$ the outgoing (resp.~incoming) neighbors of~$j$, and \defn{ancestor} (resp.~\defn{descendant}) subtrees of~$j$ the connected components of the parents (resp.~children) of~$j$ in~$T \ssm \{j\}$.
A \defn{permutree} is an oriented tree~$T$ with nodes~$[n]$, such that 
\begin{itemize}
\item any node has either one or two parents and either one or two children,
\item if node~$j$ has two parents (resp.~children), then all nodes in its left ancestor (resp.~descendant) subtree are smaller than~$j$ while all nodes in its right ancestor (resp.~descendant) subtree are larger than~$j$.
\end{itemize}
\cref{fig:permutrees} provides four examples of permutrees.
We use the following conventions:
\begin{itemize}
\item All edges are oriented bottom-up and the nodes appear from left to right in natural order.
\item We decorate the nodes with \noneCirc{}, \downCirc{}, \upCirc{}, \upDownCirc{} depending on their number of parents and children. The sequence of these symbols, from left to right, is the \defn{decoration}~$\decoration$ of~$T$.
\item We draw a vertical red wall below (resp.~above) the nodes of~$\decoration^- \eqdef \set{j \in [n]}{\decoration_j = \downCirc{} \text{ or } \upDownCirc{}}$ (resp.~of~$\decoration^+ \eqdef \set{j \in [n]}{\decoration_j = \upCirc{} \text{ or } \upDownCirc{}}$) to mark the separation between the left and right descendant (resp.~ancestor) subtrees.
\end{itemize}
\begin{figure}[b]
	\capstart
	\centerline{
	\includegraphics[scale=.8]{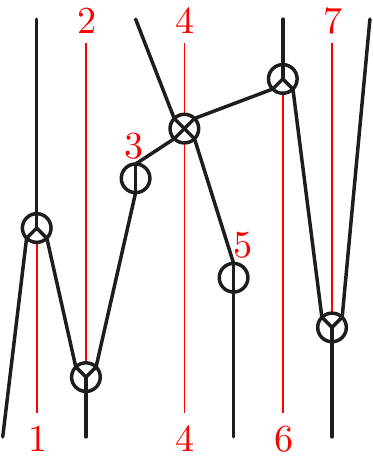} \qquad
	\includegraphics[scale=.8]{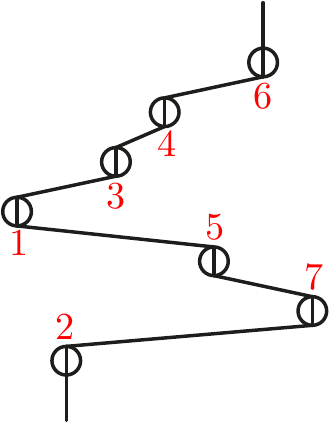} \qquad
	\includegraphics[scale=.8]{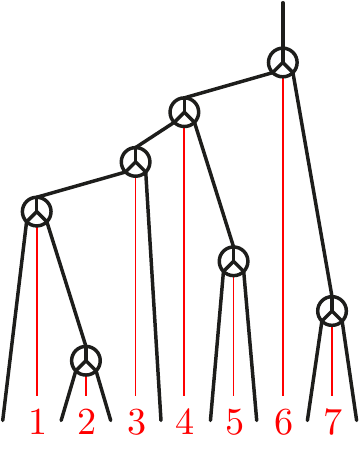} \qquad
	\includegraphics[scale=.8]{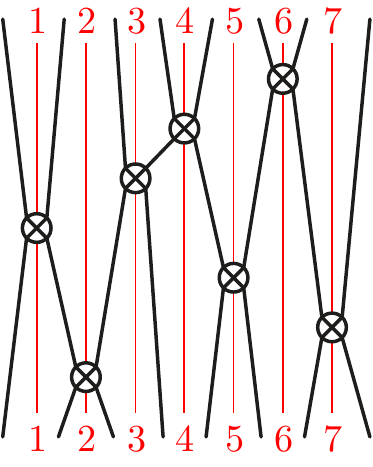}
	}
	\caption{Four examples of permutrees. While the first is generic, the last three use decorations corresponding to permutations, binary trees, and binary sequences. \mbox{\cite[Fig.~2 \& 3]{PilaudPons-permutrees}}}
	\label{fig:permutrees}
\end{figure}

As illustrated in \cref{fig:permutrees}, $\decoration$-permutrees extend and interpolate between various combinatorial families, including permutations when~$\decoration = \noneCirc^n$, binary trees when~$\decoration = \downCirc^n$, and binary sequences when~$\decoration = \upDownCirc^n$.
In fact, permutrees arose by pushing further the combinatorics of Cambrian trees developed in~\cite{ChatelPilaud} to provide combinatorial models to the type~$A$ Cambrian lattices~\cite{Reading-CambrianLattices}.

An \defn{edge cut} in a permutree~$T$ is the partition~$\edgecut{I}{J}$ of the nodes of~$T$ into the set~$I$ of nodes in the source set and the set~$J = [n] \ssm I$ of nodes in the target set of an oriented edge~of~$T$.
Edge cuts play an important role in the geometry of the permutree fan defined below.

As with the classical rotation operation on binary trees, there is a local operation on $\decoration$-permutrees which only exchanges the orientation of an edge and rearranges the endpoints of two other edges.
Namely, consider an edge~$i \to j$ in a $\decoration$-permutree~$T$.
Let~$D$ denote the only (resp.~the right) descendant subtree of node~$i$ if~$i \notin \decoration^-$ (resp.~if~$i \in \decoration^-$) and let~$U$ denote the only (resp.~the left) ancestor subtree of node~$j$ if~$j \notin \decoration^+$ (resp.~if~$j \in \decoration^+$).
Let~$S$ be the oriented tree obtained from~$T$ by just reversing the orientation of~$i \to j$ and attaching the subtree~$U$ to~$i$ and the subtree~$D$ to~$j$.
The transformation from~$T$ to~$S$ is the \defn{rotation} of the edge~$i \to j$.

\begin{proposition}[\cite{PilaudPons-permutrees}]
\label{prop:rotationPermutree}
The result~$S$ of the rotation of an edge~$i \to j$ in a $\decoration$-permutree~$T$ is a \mbox{$\decoration$-permutree}. Moreover, $S$ is the unique $\decoration$-permutree with the same edge cuts as~$T$, except the cut defined by~$i \to j$.
\end{proposition}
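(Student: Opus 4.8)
The plan is to prove the two assertions separately. For the first --- that $S$ is again a $\decoration$-permutree --- note that the rotation is a purely local modification: it changes only the edge $i \to j$ (reversing its orientation) together with the two edges attaching $D$ to $i$ and $U$ to $j$ (moving their respective endpoints from $i$ to $j$ and from $j$ to $i$); the incidences of every node other than $i$ and $j$, and the orientation of every other edge, are untouched. So only the two defining conditions of a $\decoration$-permutree at the nodes $i$ and $j$ need to be checked. The degree condition is immediate: the rotation replaces, among the descendant subtrees of $i$, the subtree $D$ by the single node $j$, and symmetrically replaces $U$ by $i$ among the ancestor subtrees of $j$, so the numbers of parents and of children of $i$ and of $j$ --- hence the whole decoration --- are preserved. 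For the ordering condition at $i$ and $j$ I would run the short case analysis according to whether $i \in \decoration^-$ and $j \in \decoration^+$ (and, where needed, according to the order of $i$ and $j$): the whole point of the way $D$ and $U$ are singled out --- as the unique, or else the appropriate left or right, descendant subtree of $i$ and ancestor subtree of $j$ --- is that, after the swap, the subtree newly hanging below $j$ out of $D$ lies on the correct side of $j$, and likewise the subtree newly hanging above $i$ out of $U$ lies on the correct side of $i$, while the descendant and ancestor subtrees not moved keep their sides from $T$. Keeping straight the left/right conventions and the inequalities between $i$, $j$ and the labels in $D$ and $U$ across all cases is the only genuinely delicate point here.

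For the second assertion I would compare the edge cuts of $T$ and of $S$. Since $i$ and $j$ are adjacent in $T$, the edge $i \to j$ is the only one separating them; hence for any other edge $e$ the nodes $i$ and $j$ lie on the same side of $e$, and therefore so do the subtrees $D$ and $U$ (relocated by the rotation to $j$ and $i$) and the node $j$ itself (relocated below $i$). Consequently every edge cut of $S$, except those of the three edges modified by the rotation, coincides with the corresponding edge cut of $T$. Among the three modified edges, the one attaching $D$ to $i$ in $T$ and the one attaching $D$ to $j$ in $S$ each isolate exactly the vertex set $\tilde D$ of $D$, so both carry the edge cut $\edgecut{\tilde D}{[n] \ssm \tilde D}$; likewise the two edges attaching $U$ both carry $\edgecut{[n] \ssm \tilde U}{\tilde U}$. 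Hence $S$ has precisely the edge cuts of $T$, except that the one carried by $i \to j$ in $T$ is replaced by the one carried by $j \to i$ in $S$ (concretely, the former cut with $\tilde D$ and $\tilde U$ moved across).

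Finally, to conclude uniqueness it suffices to know that a permutree is determined by its collection of oriented edge cuts, which I would prove by reconstruction: two nodes $u, v \in [n]$ are adjacent in $T$ if and only if exactly one edge cut separates them (this count being the length of the path from $u$ to $v$ in $T$), which recovers the underlying tree; and for an edge $\{u, v\}$, the unique edge cut whose two sides are the two components of $T$ with that edge deleted tells, according to which of the two sides contains $u$, how the edge is oriented --- so the permutree is recovered. Thus $S$ is the only $\decoration$-permutree having the edge cuts of $T$ with the cut of $i \to j$ replaced as above. The main obstacle is the ordering verification of the first assertion; the rest is bookkeeping once this reconstruction fact is available.
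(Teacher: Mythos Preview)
The paper does not give its own proof of this proposition: it is stated with the citation \cite{PilaudPons-permutrees} and no proof environment follows. So there is nothing in the present paper to compare your argument against; the result is imported from the reference.

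That said, your proposal is a reasonable reconstruction of the argument. The local degree/ordering check for the first assertion is the right thing to do, and your observation that the $D$- and $U$-attachment edges carry the same cuts before and after rotation is exactly what makes the second assertion work. One point to tighten in the uniqueness part: your reconstruction lemma shows that a permutree is determined by its \emph{full} collection of $n-1$ edge cuts, but the proposition asserts uniqueness among $\decoration$-permutrees sharing only the $n-2$ cuts of $T$ other than the $i\to j$ cut. To bridge this, note that none of those $n-2$ cuts separates $i$ from $j$ (in $T$ only the removed cut does), so in any candidate $S'$ the nodes $i$ and $j$ are at distance at most one, hence adjacent; the remaining $(n-1)$th edge of $S'$ is therefore $\{i,j\}$, and the decoration~$\decoration$ (fixing the in/out-degrees at $i$ and $j$) together with the already-determined neighbouring edges forces both its orientation and the reattachment of $D$ and $U$. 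With that addition your argument is complete.
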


Define the \defn{increasing rotation graph} as the directed graph whose vertices are the $\decoration$-permutrees and whose arcs are increasing rotations~$T \to S$, \ie where the edge~$i \to j$ in~$T$ is reversed to the edge~$i \leftarrow j$ in~$S$ for~$i < j$. See \cref{fig:permutreeLattices}.

\begin{proposition}[\cite{PilaudPons-permutrees}]
\label{prop:permutreeLattice}
The transitive closure of the increasing rotation graph on~$\decoration$-permutrees is a lattice, called \defn{$\decoration$-permutree lattice}.
\end{proposition}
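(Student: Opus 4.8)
The plan is to realise the increasing rotation order on~$\decoration$-permutrees as the quotient of the weak order on~$\fS_n$ by a suitable lattice congruence~$\equiv_\decoration$, and then to invoke the general fact recalled in \cref{subsec:latticeCongruences} that the quotient of a lattice by a lattice congruence is again a lattice. Concretely, I would first define a surjection~$\mathsf{P}_\decoration$ from~$\fS_n$ onto the set of~$\decoration$-permutrees by an insertion procedure that sweeps the values~$1, 2, \dots, n$, creates node~$j$ with decoration~$\decoration_j$, and glues the already-built pieces according to the two axioms defining permutrees. Let~$\equiv_\decoration$ be the equivalence relation whose classes are the fibers of~$\mathsf{P}_\decoration$ (equivalently, $\equiv_\decoration$ is the transitive closure of a family of local rewriting rules on one-line notations, one attached to each wall of~$\decoration$, which for~$\decoration = \downCirc^n$ recovers the sylvester rewriting rule of \cref{exm:sylvesterCongruence}).

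The core of the argument is to show that~$\equiv_\decoration$ is a lattice congruence. I would use the standard criterion that an equivalence relation on a finite lattice is a lattice congruence if and only if every class is an interval and the two projections sending an element to the minimum, resp.\ maximum, of its class are order preserving. That each fiber~$\mathsf{P}_\decoration^{-1}(T)$ is an interval of the weak order follows by reading off from~$T$, viewed as a poset with the decoration walls separating left and right subtrees, an explicit minimal and maximal linear extension and checking that every permutation in between inserts to~$T$. Since the weak order is graded by the number of inversions, order preservation of the projections only needs to be checked across a single cover~$\sigma \lessdot \tau$, where~$\tau$ is obtained from~$\sigma$ by swapping two values adjacent in position; this reduces to a finite case analysis on the decorations of the two swapped values and of the values whose positions lie between them, determining whether~$\sigma$ and~$\tau$ lie in the same fiber and, when they do not, comparing the minima of the two fibers. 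This case analysis is the main obstacle: it is elementary but requires care to treat all decoration patterns uniformly.

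It then remains to identify the quotient order on~$\fS_n / {\equiv_\decoration}$ with the transitive closure of the increasing rotation graph. The Hasse diagram of~$\fS_n / {\equiv_\decoration}$ is obtained by contracting each~$\equiv_\decoration$-class in the Hasse diagram of the weak order, so its cover relations come from cover relations~$\sigma \lessdot \tau$ of the weak order joining two distinct classes, with~$\sigma$ the minimum of its class. Translating the insertion procedure shows that such a cover changes the edge cut of~$\mathsf{P}_\decoration(\sigma)$ associated with exactly one edge while leaving all the others untouched; by \cref{prop:rotationPermutree} this is precisely the data of a rotation, and it is an increasing rotation because the two values swapped by~$\sigma \lessdot \tau$ appear in increasing order in~$\sigma$. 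Hence the two graphs coincide, and~$\fS_n / {\equiv_\decoration}$ --- being a lattice by the previous step --- is the~$\decoration$-permutree lattice.

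An alternative route, closer to the geometric setup of \cref{subsec:latticeCongruences}, is to exhibit directly the set of shards~$\shards_\decoration \subseteq \shards_n$ consisting of those~$\shard(i,j,S)$ whose arc passes above every~$k \in {]i,j[} \cap \decoration^-$ and below every~$k \in {]i,j[} \cap \decoration^+$, to check that~$\shards_\decoration$ is an upper ideal of the shard poset (so that it defines a lattice congruence by \cref{thm:shardIdeals}), and then to verify that the chambers of the resulting quotient fan are in bijection with~$\decoration$-permutrees with dual graph, oriented by~$\b{\alpha}$, equal to the increasing rotation graph. Either way, the lattice structure is inherited from the weak order, and the residual work --- matching permutrees, rotations, and edge cuts with the combinatorics of the chosen congruence --- is the same in both approaches.
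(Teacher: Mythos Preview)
The paper does not give its own proof of this proposition: it is quoted verbatim from~\cite{PilaudPons-permutrees} and stated without argument, as part of the recollections in \cref{subsec:permutrees}. There is therefore nothing to compare against in the present paper.

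That said, your outline is correct and is essentially the argument carried out in the cited reference~\cite{PilaudPons-permutrees}: one shows that the fibers of the permutree insertion define a lattice congruence~$\equiv_\decoration$ of the weak order (via the interval-plus-order-preserving-projections criterion), and then identifies the quotient~$\fS_n/{\equiv_\decoration}$ with the increasing rotation order using \cref{prop:rotationPermutree}. Your alternative shard-ideal route is also valid and is alluded to in the paper just after the proposition, where the congruence~$\equiv_\decoration$ is characterised by its shard ideal~$\shards_\decoration$. Either way, the content of your sketch matches what is actually proved in~\cite{PilaudPons-permutrees}; the present paper simply takes the result for granted.
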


The $\decoration$-permutree lattice specializes to the weak order when~$\decoration = \noneCirc^n$, the Tamari lattice when~$\decoration = \downCirc^n$, the Cambrian lattices~\cite{Reading-CambrianLattices} when~$\decoration \in \{\downCirc, \upCirc\}^n$ and the boolean lattice when~${\decoration = \upDownCirc^n}$.
\begin{figure}
	\capstart
	\centerline{\includegraphics[scale=.5]{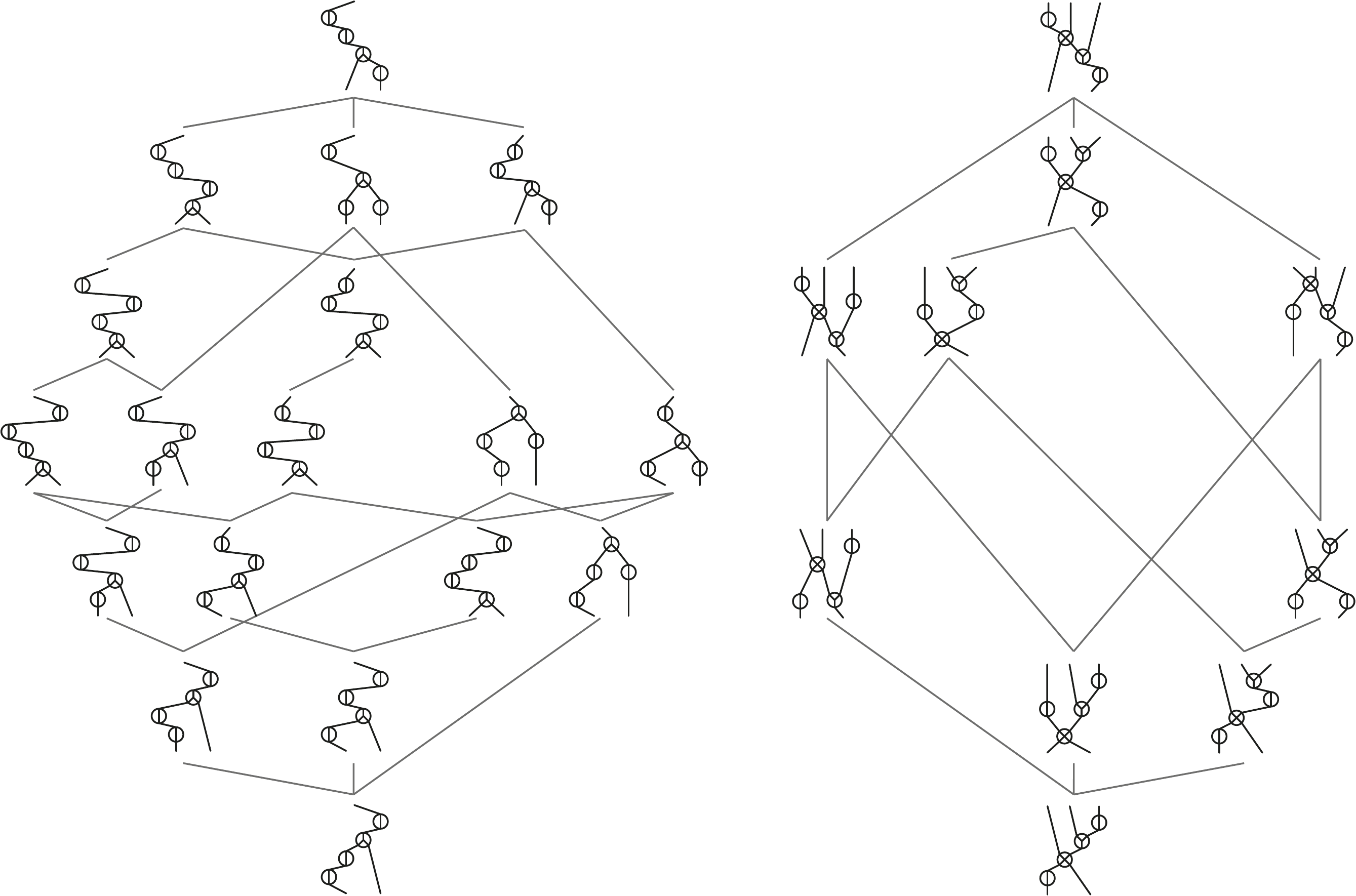}}
	\caption{The $\decoration$-permutree lattices for $\decoration \! = \! \noneCirc{}\noneCirc{}\downCirc{}\noneCirc{}$ (left) and $\decoration \! = \! \noneCirc{}\upDownCirc{}\upCirc{}\noneCirc{}$~(right).~\mbox{\cite[Fig.~11]{PilaudPons-permutrees}}}
	\label{fig:permutreeLattices}
\end{figure}

In fact, all permutree lattices are lattice quotients of the weak order, exactly as the Tamari lattice is the quotient of the weak order by the sylvester congruence.
The \defn{$\decoration$-permutree congruence}~$\equiv_\decoration$ is the equivalence relation on~$\fS_n$ defined equivalently as:
\begin{itemize}
\item the relation whose equivalence classes are the sets of linear extensions of the $\decoration$-permutrees,
\item the relation whose equivalence classes are the fibers of the $\decoration$-permutree insertion, similar to the binary tree insertion and presented in detail in~\cite{PilaudPons-permutrees},
\item the transitive closure of the rewriting rules~$U i k V j W \equiv_\decoration U k i V j W$ when~$j \in \decoration^-$ and $U j V i k W \equiv_\decoration U j V k i W$ when $j \in \decoration^+$, for some letters~$i < j < k$ and words~$U,V,W$ on~$[n]$,
\item the congruence with ideal~$\shards_\decoration \eqdef \set{\shard(i,j,S)}{1 \le i < j \le n, \; \decoration^- \cap {]i,j[} \subseteq S \subseteq {]i,j[} \ssm \decoration^+}$. In other words, the corresponding arcs do not pass below any~$k \in \decoration^-$ nor above any~$k \in \decoration^+$.
\end{itemize}
This extends the trivial congruence when~$\decoration = \noneCirc^n$, the sylvester congruence~\cite{HivertNovelliThibon-algebraBinarySearchTrees} when~${\decoration = \downCirc^n}$, the Cambrian congruences~\cite{Reading-latticeCongruences, Reading-CambrianLattices, ChatelPilaud} when~$\decoration \in \{\downCirc, \upCirc\}^n$, and the hypoplactic congruence~\cite{KrobThibon-NCSF4, Novelli-hypoplactic} when~$\decoration = \upDownCirc^n$.
The corresponding shard ideals are represented in \cref{fig:arcIdealsPermutrees}.

\begin{figure}
	\capstart
	\centerline{
	\includegraphics[scale=1.35]{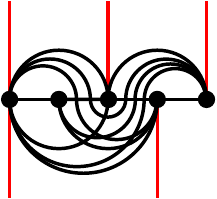} \qquad
	\includegraphics[scale=1.35]{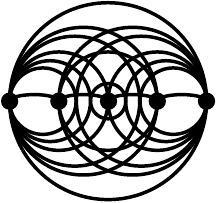} \qquad
	\includegraphics[scale=1.35]{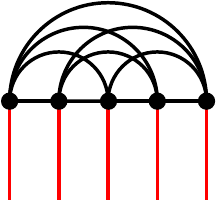} \qquad
	\includegraphics[scale=1.35]{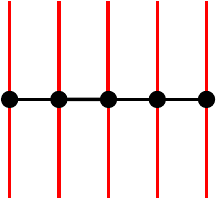}
	}
	\caption{Four examples of permutree shard ideals. While the first is generic, the last three use decorations corresponding to permutations, binary trees, and binary sequences. \mbox{\cite[Fig.~3]{Pilaud-arcDiagramAlgebra}}}
	\label{fig:arcIdealsPermutrees}
\end{figure}

\begin{proposition}[\cite{PilaudPons-permutrees}]
The $\decoration$-permutree congruence~$\equiv_\decoration$ is a lattice congruence of the weak order and the quotient~$\fS_n / {\equiv_\decoration}$ is (isomorphic to) the $\decoration$-permutree lattice.
\end{proposition}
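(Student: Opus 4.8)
The plan is to prove the two assertions separately: that $\equiv_\decoration$ is a lattice congruence, using the shard-ideal description, and that the quotient is the $\decoration$-permutree lattice, using the quotient fan. Throughout I take for granted that the four descriptions of $\equiv_\decoration$ recorded above coincide (a separate, purely combinatorial check: confluence of the rewriting rules, and identification of their equivalence classes both with the fibers of the $\decoration$-permutree insertion and with the sets of linear extensions of $\decoration$-permutrees). With the shard-ideal description
\[
\shards_\decoration \eqdef \set{\shard(i,j,S)}{1 \le i < j \le n, \; \decoration^- \cap {]i,j[} \subseteq S \subseteq {]i,j[} \ssm \decoration^+},
\]
\cref{thm:shardIdeals} reduces the first assertion to checking that $\shards_\decoration$ is an upper ideal of the shard poset $(\shards_n, \prec)$. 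So suppose $\shard(h,k,T) \in \shards_\decoration$ and that $\shard(i,j,S)$ forces $\shard(h,k,T)$, \ie$h \le i < j \le k$ and $S = T \cap {]i,j[}$. Since ${]i,j[} \subseteq {]h,k[}$, intersecting the inclusions $\decoration^- \cap {]h,k[} \subseteq T \subseteq {]h,k[} \ssm \decoration^+$ with ${]i,j[}$ yields $\decoration^- \cap {]i,j[} \subseteq T \cap {]i,j[} = S$ and $S = T \cap {]i,j[} \subseteq {]i,j[} \ssm \decoration^+$, so $\shard(i,j,S) \in \shards_\decoration$. Hence $\shards_\decoration$ is an upper ideal, and $\equiv_\decoration$ is a lattice congruence.

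For the second assertion I would first match the ground sets. By the insertion description the $\equiv_\decoration$-classes are the fibers of the map $\sigma \mapsto \mathsf{P}_\decoration(\sigma)$, which is surjective: every $\decoration$-permutree $T$ is an acyclic orientation of a tree on node set $[n]$, hence a poset on $[n]$, and any of its (nonempty set of) linear extensions is a permutation inserting back to $T$. So $\sigma \mapsto \mathsf{P}_\decoration(\sigma)$ identifies the $\equiv_\decoration$-classes with the $\decoration$-permutrees. Writing $C(T) \eqdef \bigcup_\sigma C(\sigma)$ over the linear extensions $\sigma$ of $T$, \cref{thm:quotientFanGlueing} identifies the chambers of the quotient fan $\Fan_{\equiv_\decoration}$ with the cones $C(T)$, and \cref{thm:quotientopes} makes $\Fan_{\equiv_\decoration}$ polytopal, so that the Hasse diagram of $\fS_n/{\equiv_\decoration}$ is the dual graph of $\Fan_{\equiv_\decoration}$ oriented in the direction $\b{\alpha}$. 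Since the $\decoration$-permutree lattice is the transitive closure of the increasing rotation graph (\cref{prop:permutreeLattice}), it then suffices to show that this oriented dual graph \emph{is} the increasing rotation graph on $\decoration$-permutrees.

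The core step is therefore to prove that $C(T)$ is the simplicial cone generated by the vectors $\ray(I)$ indexed by the edge cuts $\edgecut{I}{J}$ of $T$ (with $I$ the source set of the corresponding oriented edge): concretely, $C(T)$ is the intersection of $\hyp$ with the order cone of the tree poset of $T$, its $n-1$ edge cuts provide $n-1$ linearly independent such vectors, and these span $C(T)$. Granting this, $\Fan_{\equiv_\decoration}$ is simplicial and each chamber is determined by its set of edge cuts; two chambers of a complete simplicial fan are adjacent exactly when their ray sets differ in one element, \ie here when the edge-cut sets of $T$ and $S$ differ in a single cut, which by \cref{prop:rotationPermutree} happens if and only if $S$ is the rotation of that edge of $T$ (and \cref{lem:linearDependence2} records the wall-crossing, which exchanges $\ray(I)$ for $\ray(I')$, $\edgecut{I'}{J'}$ being the reversed cut). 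Finally the orientation is inherited from the braid fan: a dual-graph edge between $C(T)$ and $C(S)$ refines a weak-order cover $\sigma \lessdot \tau$ with $\sigma$ a linear extension of $T$ and $\tau$ of $S$, along which $\dotprod{\b{\alpha}}{\cdot}$ increases; unwinding the rewriting rules shows that a non-contracted weak-order cover between linear extensions of two $\decoration$-permutrees is precisely an increasing rotation, and every increasing rotation arises this way.

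The main obstacle is the edge-cut description of $C(T)$: proving that the union of the braid chambers over the linear extensions of $T$ is a full-dimensional simplicial cone whose extreme rays are exactly the edge-cut vectors. This is where the defining combinatorics of permutrees must really be used --- the requirement that each node has one or two parents and one or two children, and the way the walls $\decoration^-$ and $\decoration^+$ encode the left/right split of the ancestor and descendant subtrees --- and it is also what makes \cref{prop:rotationPermutree} applicable at the level of fans. Once this chamber-permutree dictionary is established, the remaining identifications (classes with permutrees, covers with rotations, the $\b{\alpha}$-orientation with increasingness) are essentially bookkeeping.
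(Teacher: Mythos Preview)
The paper does not prove this proposition at all: it is stated with the citation \cite{PilaudPons-permutrees} and no argument is given. So there is no ``paper's own proof'' to compare against; the result is imported wholesale from the original permutree paper.

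That said, your outline is a reasonable reconstruction of how such a proof goes, with one clean part and one sketchy part. The verification that $\shards_\decoration$ is an upper ideal of the shard poset is correct and is exactly the right way to deduce that there is \emph{some} lattice congruence attached to~$\shards_\decoration$ via \cref{thm:shardIdeals}. However, you then invoke ``the four descriptions of~$\equiv_\decoration$ coincide'' as a black box, and this hides most of the content: identifying the congruence determined by the shard ideal~$\shards_\decoration$ with the transitive closure of the rewriting rules (or with the fibers of permutree insertion) is a genuine combinatorial statement, not just bookkeeping. Likewise, in the second half you correctly locate the crux --- that $C(T)$ is the simplicial cone on the edge-cut rays --- but you do not prove it; this is precisely the geometric heart of \cite{PilaudPons-permutrees}, and the paper you are reading simply quotes the conclusion (see the description of the $\decoration$-permutree fan in \cref{subsubsec:geometryPermutrees}). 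So your proposal is not wrong, but it is more of an index of the lemmas one would need from \cite{PilaudPons-permutrees} than an independent proof.
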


Note that the description of the shard ideal~$\shards_\decoration$ above gives the following characterization of permutree congruences.

\begin{proposition}
\label{prop:characterizationPermutreeCongruences}
A lattice congruence~$\equiv$ is a permutree congruence if and only if all generators of the lower ideal~$\shards_n \ssm \shards_\equiv$ are of length~$2$, \ie of the form~$\shard(i-1, i+1, \varnothing)$ or~$\shard(i-1, i+1, \{i\})$.
\end{proposition}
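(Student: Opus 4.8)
The plan is to phrase everything through the shard dictionary of \cref{thm:shardIdeals} and reduce the statement to an explicit description of the generators of the lower ideal $\shards_n \ssm \shards_\decoration$ for an arbitrary decoration~$\decoration$. Recall that the forcing relation reads $\shard(i,j,S) \succ \shard(h,k,T)$ precisely when $[i,j] \subseteq [h,k]$ and $S = T \cap {]i,j[}$, so in the shard poset~$(\shards_n,\prec)$ a longer arc lies \emph{below} each of its subarcs; consequently the generators (the maximal elements) of a lower ideal are the \emph{shortest} shards it contains. Since $\equiv$ is a permutree congruence if and only if $\shards_\equiv = \shards_\decoration$ for some decoration~$\decoration$ (by the shard-ideal description of~$\equiv_\decoration$ recalled above, together with \cref{thm:shardIdeals}), and since $\shards_n \ssm \shards_\equiv$ is a lower ideal, it will suffice to compute the generators of $\shards_n \ssm \shards_\decoration$.

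The key step I would establish is the following claim: for any decoration~$\decoration$, the generators of the lower ideal $\shards_n \ssm \shards_\decoration$ are exactly the shards $\shard(i-1, i+1, \varnothing)$ for $i \in \decoration^-$ and $\shard(i-1, i+1, \{i\})$ for $i \in \decoration^+$. To prove it, first observe that $\shard(i,j,S) \notin \shards_\decoration$ if and only if there is some $k \in {]i,j[}$ with either $k \in \decoration^- \ssm S$ or $k \in \decoration^+ \cap S$. Each displayed length-$2$ shard is of this form, and any shard of length~$1$, being $\shard(i,i+1,\varnothing)$ with ${]i,i+1[} = \varnothing$, always lies in $\shards_\decoration$; hence the displayed shards are of minimal length among the shards of $\shards_n \ssm \shards_\decoration$ and are therefore $\prec$-maximal. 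Conversely, given $\shard(i,j,S) \notin \shards_\decoration$, choose such a $k \in {]i,j[}$: then $[k-1,k+1] \subseteq [i,j]$, and the shard $\shard(k-1,k+1,\varnothing)$ (in the case $k \in \decoration^- \ssm S$) or $\shard(k-1,k+1,\{k\})$ (in the case $k \in \decoration^+ \cap S$) lies in $\shards_n \ssm \shards_\decoration$ and is forced by $\shard(i,j,S)$, since the matching condition reads $S \cap \{k\} = \varnothing$, resp.\ $S \cap \{k\} = \{k\}$, which holds by the choice of~$k$. Thus any element of $\shards_n \ssm \shards_\decoration$ not of the displayed form fails to be maximal, and the claim follows.

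Granting the claim, the forward implication is immediate: if $\equiv = \equiv_\decoration$, then $\shards_n \ssm \shards_\equiv = \shards_n \ssm \shards_\decoration$ has only length-$2$ generators. For the converse I would argue as follows. Suppose every generator of $\shards_n \ssm \shards_\equiv$ has length~$2$, hence is $\shard(i-1,i+1,\varnothing)$ or $\shard(i-1,i+1,\{i\})$ for some $2 \le i \le n-1$; let $\decoration$ be the decoration with $\decoration^- \eqdef \set{i}{\shard(i-1,i+1,\varnothing) \text{ is a generator of } \shards_n \ssm \shards_\equiv}$ and $\decoration^+ \eqdef \set{i}{\shard(i-1,i+1,\{i\}) \text{ is a generator of } \shards_n \ssm \shards_\equiv}$. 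By the claim, the lower ideals $\shards_n \ssm \shards_\equiv$ and $\shards_n \ssm \shards_\decoration$ have the same set of generators, hence coincide, so $\shards_\equiv = \shards_\decoration = \shards_{\equiv_\decoration}$ and therefore $\equiv = \equiv_\decoration$ by \cref{thm:shardIdeals}.

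The hard part will not be any single computation but the bookkeeping: keeping straight the direction of the forcing relation (longer arc $\prec$ subarc, so that generators are the \emph{shortest} removed shards), and, inside the claim, verifying that an offending letter~$k$ in a ``bad'' shard always yields one of the two canonical length-$2$ bad shards that the given shard forces. This last step is precisely where the sandwich description $\decoration^- \cap {]i,j[} \subseteq S \subseteq {]i,j[} \ssm \decoration^+$ of~$\shards_\decoration$ is genuinely used; the same description also makes the length-$1$ shards automatically preserved, which is what pins the generators down to length exactly~$2$.
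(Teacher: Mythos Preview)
Your argument is correct and is exactly the computation the paper has in mind: the proposition is stated there without proof, as an immediate consequence of the explicit description $\shards_\decoration = \set{\shard(i,j,S)}{\decoration^- \cap {]i,j[} \subseteq S \subseteq {]i,j[} \ssm \decoration^+}$, and your claim that the generators of $\shards_n \ssm \shards_\decoration$ are precisely the length-$2$ shards $\shard(i-1,i+1,\varnothing)$ for $i \in \decoration^-$ and $\shard(i-1,i+1,\{i\})$ for $i \in \decoration^+$ is the natural way to unpack that remark. Your handling of the forcing direction and the reconstruction of~$\decoration$ from the generator set in the converse are both fine.
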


We conclude these recollections on combinatorics of permutrees with a natural order on all permutree congruences.
For two decorations ${\decoration, \decoration' \in \Decorations^n}$, we say that~$\decoration$ \defn{refines}~$\decoration'$ and we write~$\decoration \cle \decoration'$ if $\decoration_i \cle \decoration'_i$ for all~$i \in [n]$ for the order~$\noneCirc{} \cle \{\downCirc{}, \upCirc{}\} \cle \upDownCirc{}$.
In this case, the $\decoration$-permutree congruence refines the $\decoration'$-permutree congruence: $\sigma \equiv_\decoration \tau$ implies~$\sigma \equiv_{\decoration'} \tau$ for any two permutations~$\sigma, \tau \in \fS_n$.

\subsubsection{Geometry of permutrees}
\label{subsubsec:geometryPermutrees}

We finally recall the geometric constructions of~\cite{PilaudPons-permutrees} realizing the $\decoration$-permutree lattice.

The \defn{$\decoration$-permutree fan}~$\Fan_\decoration$ is the quotient fan of the $\decoration$-permutree congruence.
It has 
\begin{itemize}
\item a chamber~$C(T)$ for each $\decoration$-permutree~$T$, which can be defined either as the union of the chambers~$C(\sigma)$ for all linear extensions~$\sigma$ of~$T$, or by the inequalities~$\b{x}_i \le \b{x}_j$ for all edges~$i \to j$ in~$T$, or by the generators~$|I| \one_J - |J| \one_I$ for all edge cuts~$\edgecut{I}{J}$ of~$T$,
\item a ray~$C(I)$ for each proper subset~$\varnothing \ne I \subsetneq [n]$ such that for all~$a < b < c$, if~$a,c \in I$ then~$b \notin \decoration^- \ssm I$, and if~$a,c \notin I$ then~$b \notin \decoration^+ \cap I$. We denote by~$\cI_\decoration$ the collection of these proper subsets. See \cref{prop:raysPermutreeFan}.
\end{itemize}
Two examples of $\decoration$-permutree fans are represented in \cref{fig:permutreeFan}.
\begin{figure}[t]
	\capstart
	\centerline{\includegraphics[scale=.5]{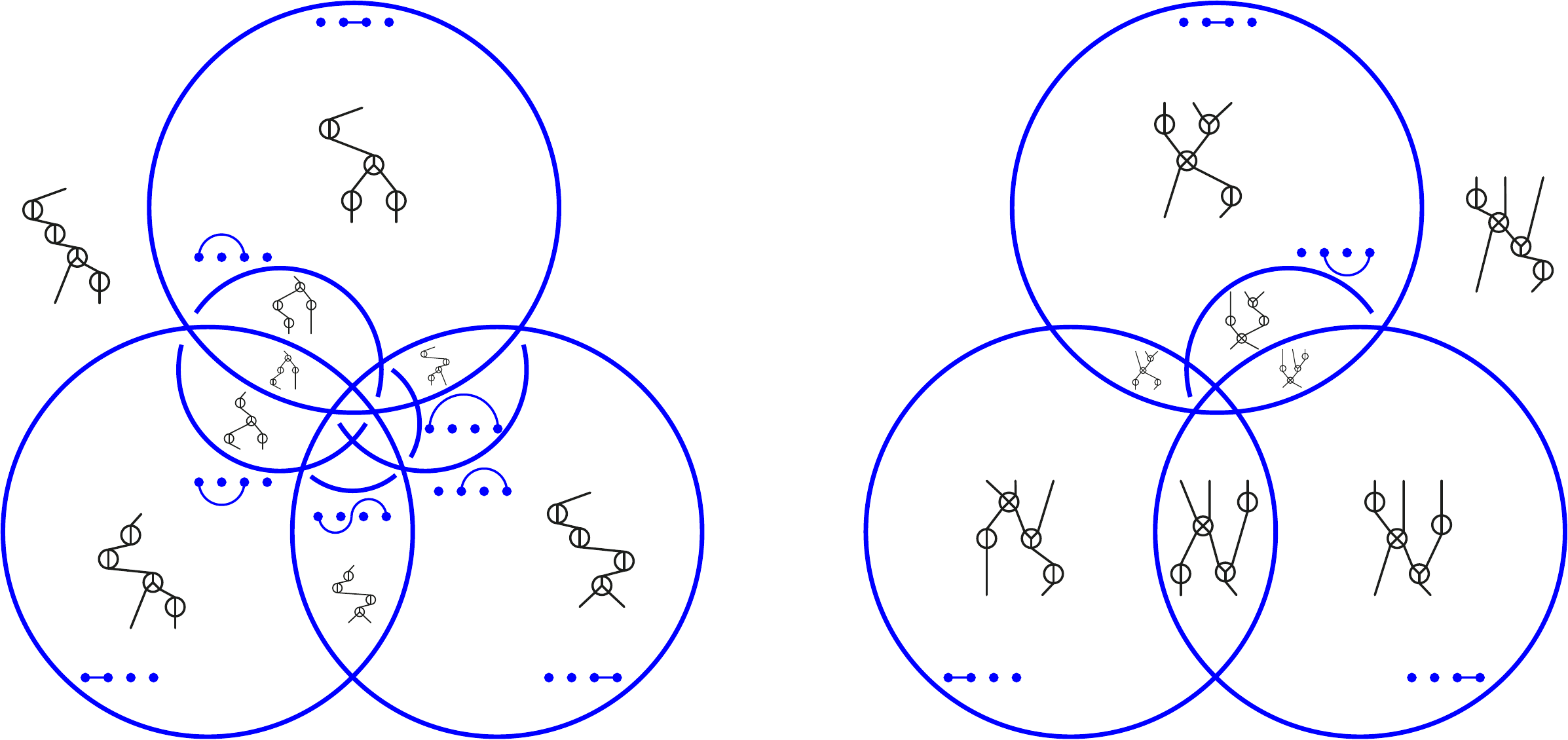}}
	\caption{The permutree Fans~$\Fan_{\noneCirc{}\noneCirc{}\downCirc{}\noneCirc{}}$ (left) and~$\Fan_{\noneCirc{}\upDownCirc{}\upCirc{}\noneCirc{}}$ (right). Each shard is labeled with its coresponding arc, and some chambers are labeled with their corresponding permutrees.}
	\label{fig:permutreeFan}
\end{figure}
The $\decoration$-permutree fan~$\Fan_\decoration$ specializes to the braid fan when~$\decoration = \noneCirc{}^n$, the (type~$A$) Cambrian fans of N.~Reading and D.~Speyer~\cite{ReadingSpeyer} when~$\decoration \in \{\downCirc{}, \upCirc{}\}^n$, the fan defined by the hyperplane arrangement~$\b{x}_i = \b{x}_{i+1}$ for each~$i \in [n-1]$ when~$\decoration = \upDownCirc{}^n$, and the fan defined by the hyperplane arrangement~$\b{x}_i = \b{x}_j$ for each~$1 \le i < j \le n$ such that~$\decoration_k = \noneCirc{}$ for all~$i < k < j$ when~$\decoration \in \{\noneCirc{},\upDownCirc{}\}^n$.
Decoration refinements translate to permutree fan refinements: if~$\decoration \cle \decoration'$, then the $\decoration$-permutree fan~$\Fan_\decoration$ refines the $\decoration'$-permutree~fan~$\Fan_{\decoration'}$.

\enlargethispage{.3cm}
The \defn{$\decoration$-permutreehedron}~$\Permutreehedron$ is the polytope defined equivalently as:
\begin{itemize}
\item the convex hull of the points~$\smash{\sum_{j \in [n]} \big( 1 + d(T,j) + \underline{\ell}(T,j) \, \underline{r}(T,j) - \overline{\ell}(T,j) \, \overline{r}(T,j) \big) \, \b{e}_j}$ for all $\decoration$-permutrees~$T$, where $d(T,j), \underline{\ell}(T,j), \underline{r}(T,j), \overline{\ell}(T,j), \overline{r}(T,j)$ respectively denote the numbers of nodes in the descendant, left descendant, right descendant, left ancestor, right ancestor subtrees of~$j$ in~$T$,
\item the intersection of the hyperplane~$\Hyp$ with the halfspaces~$\bigset{\b{x} \in \R^n}{\sum_{i \in I} \b{x}_i \ge \binom{|I|+2}{2}}$ for all subsets~$I$ in~$\cI_\decoration$.
\end{itemize}

Two examples of $\decoration$-permutreehedra are represented in \cref{fig:permutreehedron}.
\begin{figure}[t]
	\capstart
	\centerline{\includegraphics[scale=.5]{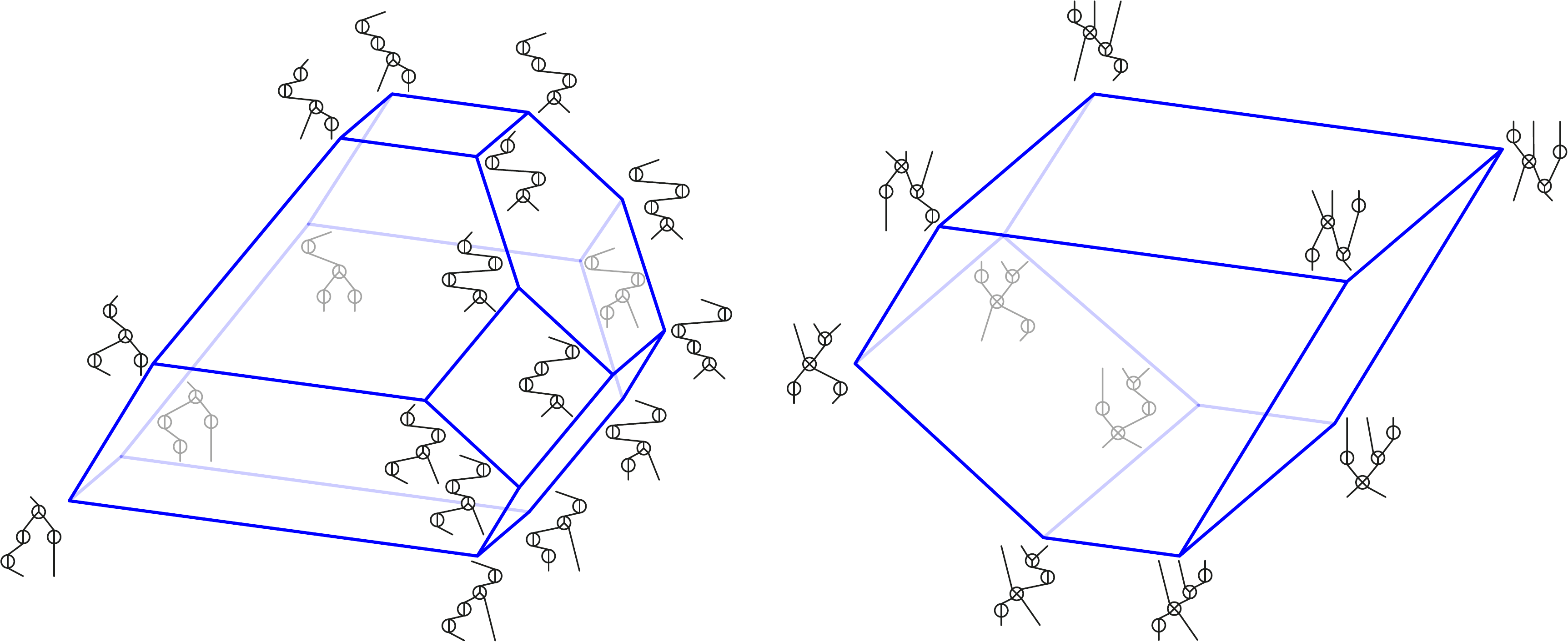}}
	\caption{The permutreehedra~$\Permutreehedron[\noneCirc{}\noneCirc{}\downCirc{}\noneCirc{}]$ (left) and~$\Permutreehedron[\noneCirc{}\upDownCirc{}\upCirc{}\noneCirc{}]$ (right). \cite[Fig.~15]{PilaudPons-permutrees}}
	\label{fig:permutreehedron}
\end{figure}
The $\decoration$-permutreehedron~$\Permutreehedron$ specializes to the permutahedron~$\Perm$ when~${\decoration = \noneCirc{}^n}$, J.-L.~Loday's associahedron~$\Asso$~\cite{ShniderSternberg, Loday} when~$\decoration = \downCirc{}^n$, C.~Hohlweg and C.~Lange's associahedra~$\Asso[\decoration]$~\cite{HohlwegLange, LangePilaud} when~${\decoration \in \{\downCirc{}, \upCirc{}\}^n}$, the parallelepiped with directions~$\b{e}_i - \b{e}_{i+1}$ for each~$i \in [n-1]$ when~$\decoration = \upDownCirc{}^n$, the graphical zonotope~$\Zono[\decoration]$ generated by the vectors~$\b{e}_i - \b{e}_{j}$ for each~$1 \le i < j \le n$ such that~$\decoration_k = \noneCirc$ for all~$i < k < j$ when~$\decoration \in \{\noneCirc{},\upDownCirc{}\}^n$.

\begin{figure}[t]
	\capstart
	\centerline{\includegraphics[width=\textwidth]{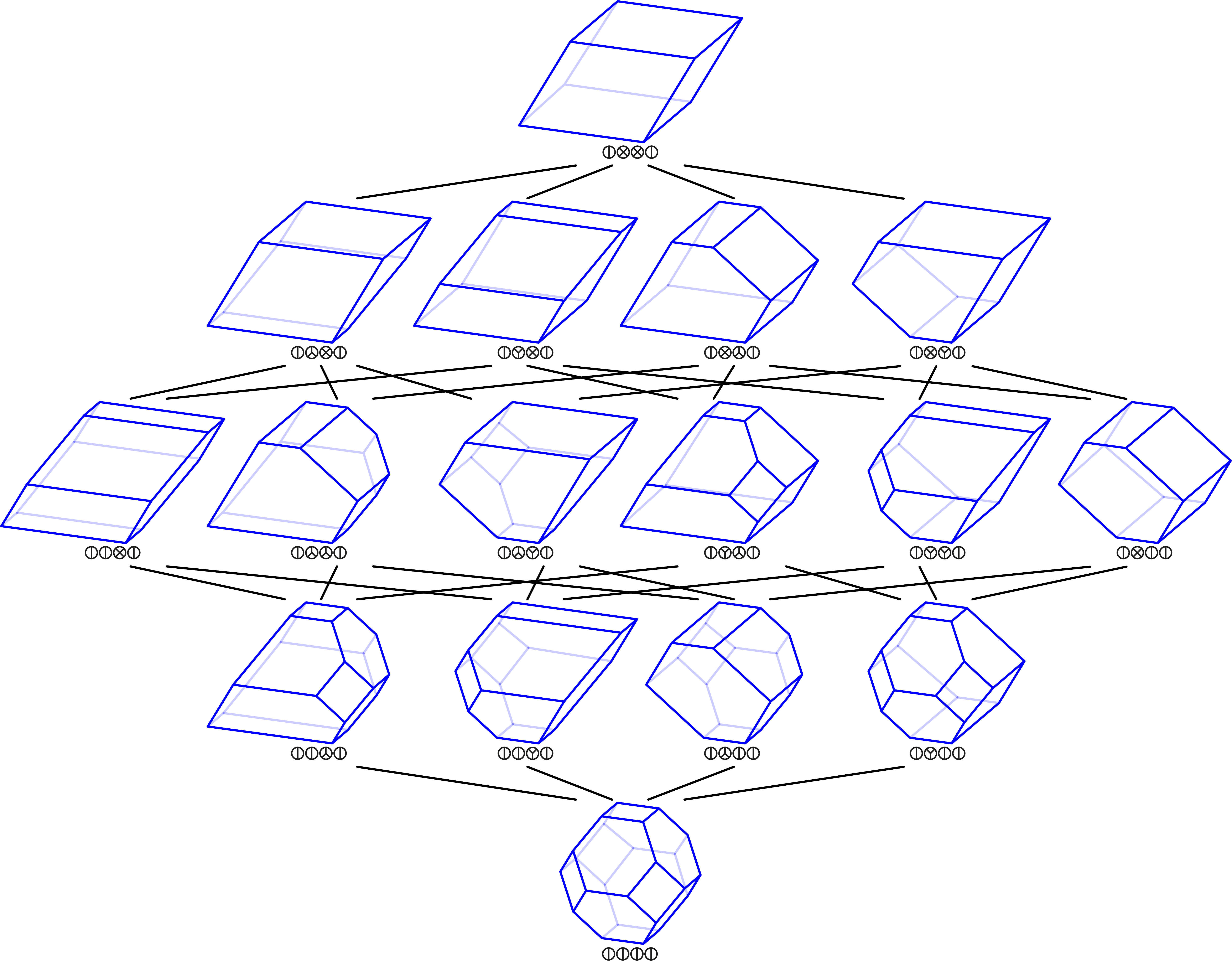}}
	\caption{The $\decoration$-permutreehedra, for all decorations~$\decoration \in \noneCirc{} \cdot \Decorations^2 \cdot \noneCirc{}$. \cite[Fig.~16]{PilaudPons-permutrees}}
	\label{fig:permutreehedra}
\end{figure}

\begin{theorem}[\cite{PilaudPons-permutrees}]
\label{thm:permutreehedron}
The $\decoration$-permutree fan~$\Fan_\decoration$ is the normal fan of the $\decoration$-permutreehedron~$\Permutreehedron$.
\end{theorem}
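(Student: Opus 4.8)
The plan is to show that the facet vector of the $\decoration$-permutreehedron $\Permutreehedron$ lies in the type cone $\typeCone(\Fan_\decoration)$, and then conclude by \cref{prop:characterizationPolytopalFan}. First recall that $\Fan_\decoration$ is a complete (\cref{thm:quotientFanGlueing}), essential, and simplicial fan in $\hyp$: by the description recalled above, the chamber $C(T)$ of a $\decoration$-permutree $T$ is generated by the $n-1$ vectors $\ray(I)$ indexed by the edge cuts $\edgecut{I}{J}$ of $T$, and since $C(T)$ is a full-dimensional cone of $\hyp$ generated by exactly $n-1$ vectors, these vectors are linearly independent. Moreover $\Permutreehedron$ is the removahedron $\Remo[\cI_\decoration]$, obtained from the classical permutahedron $\Perm$ by keeping only the inequalities $\dotprod{\ray(I)}{\b{x}} \le h_\circ(I)$ for $I \in \cI_\decoration$, where $\cI_\decoration$ is precisely the set indexing the rays of $\Fan_\decoration$; so in the notation of \cref{subsec:typeCones} we have $\Permutreehedron = P_{h_\decoration}$ with $h_\decoration = h_\circ|_{\cI_\decoration}$, and it suffices to check that $h_\decoration$ satisfies every wall-crossing inequality of $\Fan_\decoration$.

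Next I would describe the walls of $\Fan_\decoration$ and the linear dependences across them. Via the dual graph, the walls of $\Fan_\decoration$ correspond to rotations of $\decoration$-permutrees: $C(T)$ and $C(S)$ are adjacent exactly when $S$ is obtained from $T$ by rotating some edge $i \to j$, and then by \cref{prop:rotationPermutree} these chambers share all their rays except $\ray(I)$ (the cut of $i \to j$ in $T$) and $\ray(I')$ (the cut of the rotated edge in $S$). The key combinatorial point is that, using the descendant subtree $D$ of $i$ and the ancestor subtree $U$ of $j$ involved in the rotation, the subsets $I \cap I'$ and $I \cup I'$ are again edge cuts of both $T$ and $S$. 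Granting this, $\ray(I \cap I')$ and $\ray(I \cup I')$ belong to $C(T) \cap C(S)$, so \cref{lem:linearDependence1} provides the dependence $\ray(I) + \ray(I') = \ray(I \cap I') + \ray(I \cup I')$ among the rays of $C(T) \cup C(S)$ (any term equal to $\ray(\varnothing) = \ray([n]) = 0$ being dropped, which happens exactly when $I \cap I' = \varnothing$ or $I \cup I' = [n]$). Since $C(T)$ and $C(S)$ are adjacent simplicial chambers this is the unique such dependence up to scaling, and its coefficients on $\ray(I)$ and $\ray(I')$ are positive, so the associated wall-crossing inequality reads $h_\decoration(I) + h_\decoration(I') > h_\decoration(I \cap I') + h_\decoration(I \cup I')$. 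As $h_\decoration = h_\circ|_{\cI_\decoration}$ and $h_\circ(\varnothing) = h_\circ([n]) = 0$, the difference of the two sides equals $h_\circ(I) + h_\circ(I') - h_\circ(I \cap I') - h_\circ(I \cup I')$, which is a positive multiple of $|I \ssm I'| \cdot |I' \ssm I|$; this is strictly positive, since at a genuine wall $I \not\subseteq I'$ and $I' \not\subseteq I$ (otherwise the displayed dependence would be trivial). Hence $h_\decoration \in \typeCone(\Fan_\decoration)$, and \cref{prop:characterizationPolytopalFan} gives that $\Fan_\decoration$ is the normal fan of $\Permutreehedron$.

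The step I expect to be the main obstacle is the combinatorial claim that $I \cap I'$ and $I \cup I'$ are edge cuts of both $T$ and $S$ at a rotation: this is exactly where the definition of the rotation enters — how the subtrees $D$ and $U$ are reattached, together with the constraint that a node with two parents (resp.~two children) separates its ancestor (resp.~descendant) subtrees by size — and it is cleanest to establish by a short case analysis on whether $i \in \decoration^-$ and whether $j \in \decoration^+$. A self-contained alternative, taken in \cite{PilaudPons-permutrees}, avoids the type cone altogether: one checks by a direct computation with the explicit coordinates of the points $\b{p}(T)$ that $\dotprod{\ray(I)}{\b{p}(T)} \le h_\decoration(I)$ for every $I \in \cI_\decoration$, with equality precisely for the edge cuts of $T$; then $\b{p}(T)$ is a vertex of $\Permutreehedron$ with normal cone $C(T)$, and since the cones $C(T)$ already tile $\hyp$, these are all the vertices and the normal fan of $\Permutreehedron$ is $\Fan_\decoration$.
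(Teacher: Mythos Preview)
Your proposal is correct and follows essentially the same route that the paper takes. Note that the paper does not prove \cref{thm:permutreehedron} where it is stated (it is cited from~\cite{PilaudPons-permutrees}); however, the key combinatorial step you single out --- that at a rotation of the edge~$i \to j$ the sets~$I \cap I'$ and~$I \cup I'$ are again edge cuts of both permutrees --- is precisely the content of the proof of \cref{prop:wallCrossingInequalitiesPermutreeFan} in \cref{subsec:permutreesStronglyRemovahedral}, and the theorem then follows (as a special case of the corollary right after it) from the strict submodularity of~$h_\circ$, exactly as you argue. The paper handles your ``main obstacle'' not by a case analysis on~$\decoration_i$ and~$\decoration_j$ but uniformly, by naming the six subtrees~$D, U, \underline{L}, \overline{L}, \underline{R}, \overline{R}$ around the rotated edge (with the convention that some of them are empty when~$\decoration_i$ or~$\decoration_j$ is not~$\upDownCirc$) and reading off~$I \cap I' = D$ and~$I \cup I' = [n] \ssm U$ directly.
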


Note that decoration refinements translate to permutreehedra inclusions: if~$\decoration \cle \decoration'$, then the permutreehedron~$\Permutreehedron[\decoration']$ is obtained by deleting inequalities in the facet description of the permutreehedron~$\Permutreehedron[\decoration']$, so that~$\Permutreehedron[\decoration] \subseteq \Permutreehedron[\decoration']$ as illustrated in \cref{fig:permutreehedra}.
In particular, all facet-defining inequalities of the permutreehedron~$\Permutreehedron$ are facet-defining inequalities of the permutahedron~$\Perm$, which can be rephrased as follows.

\begin{corollary}
\label{coro:permutreeCongruencesRemovahedral}
All permutree congruences are removahedral.
\end{corollary}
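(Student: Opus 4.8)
The plan is to observe that the $\decoration$-permutreehedron~$\Permutreehedron$ is literally a removahedron, and then to invoke \cref{thm:permutreehedron}. Recall that every permutree congruence is~$\equiv_\decoration$ for some decoration~$\decoration \in \Decorations^n$, that its quotient fan is the $\decoration$-permutree fan~$\Fan_\decoration$, and that~$\equiv_\decoration$ is removahedral if and only if~$\Fan_\decoration$ — whose rays are the~$\set{\ray(I)}{I \in \cI_\decoration}$ — is the normal fan of~$\Remo[\cI_\decoration] = \bigset{\b{x} \in \Hyp}{\dotprod{\ray(I)}{\b{x}} \le h_\circ(I) \text{ for all } I \in \cI_\decoration}$, where~$h_\circ(I) = n|I|(n-|I|)/2$ is the support function of~$\Perm$ and~$\cI_\decoration$ is a collection of proper subsets of~$[n]$.

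First I would rewrite the halfspace description of~$\Permutreehedron$ recalled in \cref{subsubsec:geometryPermutrees} in the normalization of \cref{subsec:typeCones}. Exactly as for the permutahedron~$\Perm$ itself, using~$\ray(I) = |I|\one - n\one_I$ and restricting to~$\Hyp$, the defining inequality of~$\Permutreehedron$ indexed by~$I \in \cI_\decoration$, namely~$\sum_{i \in I}\b{x}_i \ge \binom{|I|+1}{2}$, is equivalent to~$\dotprod{\ray(I)}{\b{x}} \le h_\circ(I)$. Since~$\cI_\decoration$ consists of proper subsets of~$[n]$ and these are exactly the same constants~$h_\circ(I)$ that appear in the facet description of~$\Perm$, this identifies~$\Permutreehedron = \Remo[\cI_\decoration]$: in other words, $\Permutreehedron$ is obtained from~$\Perm$ by \emph{deleting} the inequalities indexed by the proper subsets of~$[n]$ not in~$\cI_\decoration$, without ever \emph{moving} a facet. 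By \cref{thm:permutreehedron} the normal fan of this removahedron~$\Remo[\cI_\decoration]$ is~$\Fan_\decoration$, whose rays are indeed the~$\set{\ray(I)}{I \in \cI_\decoration}$; hence~$\equiv_\decoration$ is removahedral, and since~$\decoration$ was arbitrary the statement follows.

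The only real content is the verification, in the second paragraph, that the halfspace description of~$\Permutreehedron$ carries, on each~$I \in \cI_\decoration$, the same constant~$h_\circ(I)$ as the facet description of~$\Perm$ — \ie that one passes from~$\Perm$ to~$\Permutreehedron$ by erasing inequalities, never by moving a facet. I do not expect any obstacle: it is a one-line conversion between the two standard normalizations, and the corollary is then a bookkeeping consequence of \cref{thm:permutreehedron} together with the explicit facet descriptions of~$\Perm$ and~$\Permutreehedron$. This is essentially the content of the remark immediately preceding the statement.
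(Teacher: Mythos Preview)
Your proposal is correct and follows exactly the approach the paper takes: the corollary is presented there as an immediate rephrasing of the preceding remark that all facet-defining inequalities of~$\Permutreehedron$ are facet-defining inequalities of~$\Perm$, combined with \cref{thm:permutreehedron}. Your verification that~$\Permutreehedron = \Remo[\cI_\decoration]$ via the conversion~$\sum_{i \in I}\b{x}_i \ge \binom{|I|+1}{2} \iff \dotprod{\ray(I)}{\b{x}} \le h_\circ(I)$ on~$\Hyp$ just makes this explicit.
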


%%%%%%%%%%%%%%%%%%%%%%%%%%%%%%%%%%%%%%

\section{Removahedral congruences}
\label{sec:removahedralCongruences}

The main goal of this section is to show \cref{thm:main}.
An important step of the proof is the description of the rays of the quotient fans, provided in \cref{subsec:raysQuotientFan}.
While elementary, we are not aware that a characterization of these rays appeared in the literature.
We then proceed to prove \cref{thm:main}\,(i) in \cref{subsec:removahedralCongruences} and \cref{thm:main}\,(ii) in \cref{subsec:permutreesStronglyRemovahedral}.

%%%%%%%%%%%

\subsection{Rays of the quotient fan}
\label{subsec:raysQuotientFan}

Fix a ray~$\ray(I)$ of the braid fan~$\Fan_n$ corresponding to a proper subset~$\varnothing \ne I \subsetneq [n]$.
The shards of~$\shards_n$ containing $\ray(I)$ were characterized by a simple combinatorial criterion in~\cite[Lem.~5]{PilaudSantos-quotientopes}.
Here, we need to characterize which shards of~$\shards_n$ contain~$\ray(I)$ in their relative interior.
We associate with~$I$ the set~$\shards_I$ of $n-2$ shards containing the $|I|-1$ down shards joining two consecutive elements of~$I$ and the $n-|I|-1$ up shards joining two consecutive elements of~$[n] \ssm I$, that is
\[
\shards_I \eqdef \bigset{\shard(i, j, \varnothing)}{i, j \in I \text{ and } {]i,j[} \cap I = \varnothing} \cup \bigset{\shard(i, j, ]i,j[)}{i, j \notin I \text{ and } {]i,j[} \subseteq I}.
\]

\begin{lemma}
\label{lem:rayInInteriorShard}
The $n-2$ shards of~$\shards_I$ are the only shards containing~$\ray(I)$ in their relative interior.
\end{lemma}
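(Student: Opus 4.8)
The plan is to argue by direct computation, exploiting the very rigid shape of the representative vector $\ray(I) = |I|\one - n\one_I$: its coordinates take only the two values $\ray(I)_i = |I| - n$ (for $i \in I$) and $\ray(I)_i = |I|$ (for $i \notin I$), and $|I| - n < |I|$, so $\ray(I)$ attains its minimum value exactly on $I$ and its maximum value exactly on $[n] \ssm I$. Since each shard $\shard(i,j,S)$ is a full-dimensional cone inside the hyperplane $\{\b{x} \in \R^n : \b{x}_i = \b{x}_j\}$, its relative interior is cut out by making all of its defining inequalities strict, namely
\[
\set{\b{x} \in \R^n}{\b{x}_i = \b{x}_j, \; \b{x}_i > \b{x}_h \text{ for all } h \in S, \; \b{x}_i < \b{x}_k \text{ for all } k \in {]i,j[} \ssm S}.
\]
So the only thing to do is to decide for which triples $(i,j,S)$ the vector $\ray(I)$ satisfies these three families of strict conditions.

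The condition $\ray(I)_i = \ray(I)_j$ forces $i$ and $j$ to lie on the same side of $I$ (both in $I$, or both in $[n] \ssm I$); otherwise $\ray(I)$ is not even in $\shard(i,j,S)$. If $i, j \in I$, then $\ray(I)_i = |I| - n$ is the global minimum of $\ray(I)$, so no condition $\ray(I)_i > \ray(I)_h$ can hold; hence $S = \varnothing$, and the remaining conditions $\ray(I)_i < \ray(I)_k$ for $k \in {]i,j[}$ then say precisely that ${]i,j[} \cap I = \varnothing$, \ie that $i$ and $j$ are consecutive elements of $I$. Symmetrically, if $i, j \notin I$, then $\ray(I)_i = |I|$ is the global maximum of $\ray(I)$, which forces ${]i,j[} \ssm S = \varnothing$, \ie $S = {]i,j[}$, and then the conditions $\ray(I)_i > \ray(I)_h$ for $h \in {]i,j[}$ say precisely that ${]i,j[} \subseteq I$, \ie that $i$ and $j$ are consecutive in $[n] \ssm I$. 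Thus the shards having $\ray(I)$ in their relative interior are exactly the down shards $\shard(i,j,\varnothing)$ for consecutive $i,j \in I$ and the up shards $\shard(i,j,{]i,j[})$ for consecutive $i,j \in [n] \ssm I$, which is exactly the list $\shards_I$; conversely, a one-line substitution (again using that $\ray(I)$ attains its extreme values exactly on $I$ and on $[n] \ssm I$) confirms that each of these shards does contain $\ray(I)$ in its relative interior. Finally, there are $|I| - 1$ consecutive pairs in $I$ and $(n - |I|) - 1$ consecutive pairs in $[n] \ssm I$, for the claimed total of $n - 2$.

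This proof is entirely elementary and I do not anticipate a genuine obstacle; the only point that needs care is recording that the relative interior of a codimension-one shard is cut out by the strict versions of its defining inequalities, combined with the observation that $\ray(I)$ has only two coordinate values. It is this last fact that immediately collapses the admissible sets $S$ to $\varnothing$ or to all of ${]i,j[}$, and hence collapses the admissible shards to up and down shards joining consecutive elements of $I$ or of its complement.
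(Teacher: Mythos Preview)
Your proof is correct and follows essentially the same approach as the paper's: both compute directly, using that $\ray(I)$ has exactly two coordinate values and that the relative interior of a shard is obtained by making its defining inequalities strict. Your write-up is in fact slightly more explicit than the paper's in explaining why the extremality of the two coordinate values forces $S = \varnothing$ or $S = {]i,j[}$.
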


\begin{proof}
Recall that
\begin{itemize}
\item the ray~$\ray(I)$ is given by~$|I|\one - n\one_I$, thus~$\b{x}_i < \b{x}_j$ for~$i \in I$ and~$j \notin I$,
\item the relative interior of~$\shard(i,j,S)$ is given by~$\b{x}_h < \b{x}_i = \b{x}_j < \b{x}_k$ for all~$h \in S$ and~${k \notin {]i,j[} \ssm S}$.
\end{itemize}
Therefore, $\ray(I)$ satisfies these inequalities if and only if either~$i,j \in I$, $S = \varnothing$ and~$]i,j[ \cap I = \varnothing$, or $i,j \notin I$, $S = {]i,j[}$, and~${]i,j[} \subseteq I$.
Therefore, $\ray(I)$ is contained in the relative interior of~$n-2$ shards.
\end{proof}

Note that this implies that a mixed shard (\ie of the form~$\shard(i,j,S)$ with~$S \notin \{\varnothing, {]i,j[}\}$, or said differently whose arc crosses the horizontal axis) contains no ray of the braid fan in its relative interior. See for example the shards~$\shard(1,4,\{2\})$ and~$\shard(1,4,\{3\})$ in \cref{fig:shards4}.

From \cref{lem:rayInInteriorShard}, we derive the following description of the rays of the quotient fan~$\Fan_\equiv$.
As an illustration, this description is specialized to permutree fans in \cref{prop:raysPermutreeFan}.

\begin{lemma}
\label{lem:raysQuotientFan}
The ray~$\ray(I)$ is a ray of the quotient fan~$\Fan_\equiv$ if and only if~$\shards_\equiv$ contains $\shards_I$.
\end{lemma}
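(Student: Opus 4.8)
The statement to prove is \cref{lem:raysQuotientFan}: $\ray(I)$ is a ray of~$\Fan_\equiv$ if and only if~$\shards_\equiv \supseteq \shards_I$. The plan is to pass through the shard description of the quotient fan (\cref{thm:quotientFanShards}) and reduce everything to the local statement already in hand, \cref{lem:rayInInteriorShard}. Recall that by \cref{thm:quotientFanShards}, the chambers of~$\Fan_\equiv$ are the connected components of~$\hyp \ssm \bigcup \shards_\equiv$, so the lower-dimensional cones of~$\Fan_\equiv$ (in particular its rays) are obtained by intersecting the closures of these chambers, equivalently they are the faces of~$\Fan_\equiv$ cut out by the shards in~$\shards_\equiv$. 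The key observation is that a ray~$\ray(I)$ of the braid fan survives as a ray of~$\Fan_\equiv$ precisely when it is \emph{not} swallowed into the relative interior of a higher-dimensional cone of~$\Fan_\equiv$, and this happens exactly when enough shards of~$\shards_\equiv$ pass through~$\ray(I)$ to pin it down as a $1$-dimensional face.

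\emph{First} I would make precise the following dichotomy. Since $\Fan_\equiv$ coarsens the braid fan~$\Fan_n$ and both are complete fans, every ray of~$\Fan_\equiv$ is a ray of~$\Fan_n$, hence of the form~$\ray(I)$ for some~$\varnothing \ne I \subsetneq [n]$; so it suffices to decide, for each such~$I$, whether~$\ray(I)$ remains a ray. The cone~$C(I) = \R_{\ge 0}\ray(I)$ is a face of~$\Fan_\equiv$ if and only if it equals the intersection of the (closures of the) chambers of~$\Fan_\equiv$ containing it, equivalently if and only if $\ray(I)$ lies in the boundary~$\bigcup \shards_\equiv$ in a way that locally confines it to a line. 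Concretely: $\ray(I)$ is a ray of~$\Fan_\equiv$ iff the set of shards of~$\shards_\equiv$ whose \emph{relative interior} (or more precisely, whose affine span) contains~$\ray(I)$ cuts the ambient hyperplane~$\hyp$ down to a line near~$\ray(I)$. By \cref{lem:rayInInteriorShard}, the shards of~$\shards_n$ whose relative interior contains~$\ray(I)$ are exactly the $n-2$ shards of~$\shards_I$; these lie in the $n-2$ independent hyperplanes $\b{x}_i = \b{x}_j$ for consecutive $i,j \in I$ and for consecutive $i,j \notin I$, whose common intersection inside~$\hyp$ is precisely~$\vect(\ray(I))$. So $\shards_I$ already suffices to pin down the ray, and no other shard of~$\shards_n$ has~$\ray(I)$ in its relative interior.

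\emph{Next}, the two implications. For the ``if'' direction, suppose~$\shards_\equiv \supseteq \shards_I$. Then near~$\ray(I)$ (away from the origin) the walls of~$\Fan_\equiv$ passing through~$\ray(I)$ include the $n-2$ shards of~$\shards_I$, hence the minimal face of~$\Fan_\equiv$ containing a generic point of~$\ray(I)$ is contained in $\bigcap_{\shard \in \shards_I} \operatorname{aff}(\shard) \cap \hyp = \vect(\ray(I))$, so that face is exactly~$C(I)$ and~$\ray(I)$ is a ray of~$\Fan_\equiv$. For the ``only if'' direction, suppose some shard of~$\shards_I$, say $\shard_0$, is not in~$\shards_\equiv$. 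Then a generic point~$\b{p}$ of~$\ray(I)$ lies in the relative interior of~$\shard_0$, hence $\b{p} \notin \bigcup \shards_\equiv$ except possibly through the other shards; but by \cref{lem:rayInInteriorShard} the only shards of~$\shards_n$ containing~$\b{p}$ in their relative interior are those of~$\shards_I$, and the ones of~$\shards_\equiv$ among them span at most an $(n-3)$-dimensional family of linear conditions (since~$\shard_0$ is missing and the $n-2$ defining hyperplanes of~$\shards_I$ are independent), so the component of~$\hyp \ssm \bigcup \shards_\equiv$ containing~$\b{p}$ has dimension~$\ge 1$ in the directions transverse to~$\ray(I)$; more carefully, $\b{p}$ lies in the relative interior of a face of~$\Fan_\equiv$ of dimension~$\ge 2$, so~$C(I)$ is not a face and~$\ray(I)$ is not a ray of~$\Fan_\equiv$.

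\emph{The main obstacle} is making the local/dimension-counting argument of the ``only if'' direction fully rigorous: one must be sure that a shard of~$\shards_\equiv$ \emph{not} in~$\shards_I$ cannot ``accidentally'' pass through~$\ray(I)$ and help confine it. This is exactly what \cref{lem:rayInInteriorShard} rules out — such a shard~$\shard$ would contain~$\ray(I)$ in its boundary, not its relative interior, so~$\shard$ contributes no new wall of~$\Fan_\equiv$ through a generic point of~$\ray(I)$; the boundary strata of~$\shard$ are themselves lower-length shards whose relative interiors, by the same lemma, also avoid~$\ray(I)$ unless they belong to~$\shards_I$. Thus the only shards of~$\Fan_\equiv$ whose relative interior meets a generic point of~$\ray(I)$ are those of~$\shards_\equiv \cap \shards_I$, and the linear algebra above ($n-2$ independent hyperplanes cutting~$\hyp$ down to a line, each proper subset cutting strictly less) closes the argument. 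A clean way to phrase it is: the minimal face of~$\Fan_\equiv$ containing a generic point of~$\ray(I)$ equals $\bigcap_{\shard \in \shards_\equiv \cap \shards_I} \operatorname{aff}(\shard) \cap \hyp$, which is a line iff $\shards_\equiv \cap \shards_I = \shards_I$.
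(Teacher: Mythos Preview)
Your argument is correct, and it takes a genuinely different route from the paper's own proof. The paper treats the ``if'' direction exactly as you do (the $n-2$ shards of~$\shards_I$ intersect along the line through~$\ray(I)$), but for the ``only if'' direction it passes through lattice theory: it considers the interval~$\fS_I$ of permutations~$\sigma$ with~$\sigma([|I|]) = I$, observes that the restriction~$\equiv_I$ of~$\equiv$ to this interval has precisely~$\shards_I$ as its basic shards, and concludes (citing \cite{Reading-HopfAlgebras, Reading-shardIntersectionOrder}) that when one of these basic shards is missing the subcongruence is not essential, whence~$\ray(I)$ is not a ray. Your argument stays entirely on the geometric side, using \cref{thm:quotientFanShards} and \cref{lem:rayInInteriorShard} to count the independent hyperplane constraints through a generic point of~$C(I)$. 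This is more self-contained (no appeal to the structure theory of congruence restrictions), at the price of having to handle carefully the shards that contain~$\ray(I)$ only on their boundary --- which you correctly flag as the main obstacle. One small correction: the sentence ``the boundary strata of~$\shard$ are themselves lower-length shards'' is not literally true (facets of shards are not shards in general), but it is also not needed; what matters, and what you do say, is that such a~$\shard$ is only a half-hyperplane near a generic point of~$C(I)$ and therefore does not separate chambers there, so it contributes no constraint on the linear span of the minimal face. Your closing formula should read ``the linear span of the minimal face equals $\bigcap_{\shard \in \shards_\equiv \cap \shards_I} \operatorname{aff}(\shard) \cap \hyp$'' rather than ``the minimal face equals'', since the face is a cone, not a subspace.
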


\begin{proof}
If $\shards_\equiv$ contains~$\shards_I$, then the quotient fan~$\Fan_\equiv$ contains $n-2$ shards which intersect along~$\ray(I)$, so that~$\ray(I)$ is a ray of~$\Fan_\equiv$.
The converse can be derived from~\cite[Prop.~5.10]{Reading-HopfAlgebras} or~\cite[Prop.~7.7]{Reading-shardIntersectionOrder}.
Let us just provide a sketchy argument.
Let~$\fS_I$ be the interval of permutations~$\sigma$ of~$[n]$ such that cone~$C(\sigma)$ contains~$\ray(I)$ (or equivalently $\sigma([|I|]) = I$).
Let~$\equiv_I$ denote the subcongruence of~$\equiv$ induced by~$\fS_I$.
The basic shards of~$\equiv_I$ are the shards of~$\shards_I$.
Since~$\shards_\equiv$ does not contain~$\shards_I$, the congruence~$\equiv_I$ is not essential, so that~$\ray(I)$ is not a ray~of~$\Fan_\equiv$.
\end{proof}

%%%%%%%%%%%

\subsection{Removahedral congruences}
\label{subsec:removahedralCongruences}

We are now ready to prove the following statement, which is a more precise reformulation of \cref{thm:main}\,(i).

\begin{theorem}
\label{thm:removahedralCongruences}
The only essential removahedral congruences are the permutree congruences.
\end{theorem}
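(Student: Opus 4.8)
The plan is to prove both implications separately, using the characterization of permutree congruences in \cref{prop:characterizationPermutreeCongruences} (all generators of the lower ideal $\shards_n \ssm \shards_\equiv$ have length $2$) as the combinatorial pivot, and the ray description of \cref{lem:raysQuotientFan} as the geometric pivot. The ``if'' direction is already done: by \cref{coro:permutreeCongruencesRemovahedral}, every permutree congruence is removahedral. So the real content is the ``only if'' direction: if $\equiv$ is essential and removahedral, then $\equiv$ is a permutree congruence.

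\textbf{The contrapositive approach.} Suppose $\equiv$ is essential but \emph{not} a permutree congruence. By \cref{prop:characterizationPermutreeCongruences}, the lower ideal $\shards_n \ssm \shards_\equiv$ has a generator $\shard(i,j,S)$ of length $j-i \ge 3$; since $\equiv$ is essential, all basic shards $\shard(k,k+1,\varnothing)$ lie in $\shards_\equiv$, so in fact $j - i \ge 2$, and minimality of a length-$3$ generator will be arranged by choosing $\shard(i,j,S)$ to be a generator of minimal length among those of length $\ge 3$. The key claim to establish is: the removed shard $\shard(i,j,S)$ is \emph{mixed} or, even if it is an up/down shard of length $\ge 3$, the removahedron $\Remo[\cI]$ built on the surviving rays cannot realize $\Fan_\equiv$. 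Concretely, I want to produce a ray $\ray(I)$ of the braid fan which (a) is \emph{not} a ray of $\Fan_\equiv$ — because $\shards_\equiv$ fails to contain all of $\shards_I$, by \cref{lem:raysQuotientFan} — yet (b) the facet-defining inequalities that would have to be removed to pass from $\Perm$ to a realization of $\Fan_\equiv$ cannot all be removed consistently. The cleanest incarnation: show that $\Fan_\equiv$ has the \emph{same} ray set $\cI_\equiv = \set{I}{\shards_I \subseteq \shards_\equiv}$ as some strictly finer permutree fan would force, but the coarsening across the removed length-$\ge 3$ shard is not achievable by merely deleting permutahedron inequalities — because deleting the inequality indexed by a subset $I$ with $\ray(I)$ interior to $\shard(i,j,S)$ is impossible (there is no such $I$, as a mixed shard contains no ray in its relative interior, per the remark after \cref{lem:rayInInteriorShard}), while the wall-crossing inequality of \cref{prop:characterizationPolytopalFan} across the glued wall fails for the height function $h_\circ$.

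\textbf{Carrying it out.} I would argue as follows. Let $\shard(i,j,S)$ be a minimal-length generator of $\shards_n \ssm \shards_\equiv$ with $j-i\ge 3$ (if all generators have length $\le 2$ we are in the permutree case and done). Two cases. \emph{Case 1: $\shard(i,j,S)$ is mixed}, i.e.\ $\varnothing \ne S \subsetneq {]i,j[}$. Then by minimality every proper subarc of length $\ge 3$ is in $\shards_\equiv$, and all length-$2$ shards are in $\shards_\equiv$, so the wall of $\Fan_\equiv$ along (part of) the hyperplane $\b{x}_i = \b{x}_j$ is exactly the union of $\shard(i,j,S)$ with its forced companions; this wall separates two adjacent chambers of $\Fan_\equiv$, and I compute the unique linear dependence among the rays of the two chambers via \cref{lem:linearDependence1,lem:linearDependence2} (it is a genuinely non-degenerate dependence $\ray(I) + \ray(J) = \ray(I\cap J) + \ray(I \cup J)$ with all four sets proper and nonempty, of the kind first appearing in $\Fan_4$). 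Plugging $h_\circ$ into the wall-crossing inequality of \cref{prop:characterizationPolytopalFan} gives $h_\circ(I) + h_\circ(J) - h_\circ(I\cap J) - h_\circ(I \cup J) = 2n\,|I\ssm J|\,|J\ssm I| > 0$, so the inequality is \emph{strict}, not tight — meaning that on the surviving rays the permutahedron's facet heights do \emph{not} place the wall where $\Fan_\equiv$ needs it: the corresponding lower-dimensional face of $\Remo[\cI_\equiv]$ is not flat, so $\Fan_\equiv$ is not the normal fan of $\Remo[\cI_\equiv]$. Since by \cref{lem:raysQuotientFan} the ray set of $\Fan_\equiv$ is forced to be $\set{\ray(I)}{\shards_I \subseteq \shards_\equiv}$ and there is no freedom in which inequalities to keep (one must keep exactly the inequalities indexed by rays of $\Fan_\equiv$), $\Remo[\cI_\equiv]$ is the \emph{only} removahedron candidate, and it fails. \emph{Case 2: $\shard(i,j,S)$ is an up or down shard of length $\ge 3$} (so $S = {]i,j[}$ or $S = \varnothing$). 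Here I exhibit a specific ray $\ray(I)$ — e.g.\ for a removed down shard $\shard(i,j,\varnothing)$ of length $3$, take $I = \{i\} \cup {]i,j[} \cup ([n]\ssm[1,j])$ chosen so that $\shards_I$ contains $\shard(i,j,\varnothing)$ — which is a ray of the braid fan but \emph{not} of $\Fan_\equiv$, while simultaneously $\ray(I)$ \emph{is} required to be present for the glued chambers of $\Fan_\equiv$ to close up; the wall-crossing computation with $h_\circ$ again yields strict inequality, obstructing flatness. In both cases one concludes that $\Fan_\equiv$ is not the normal fan of any removahedron, which is the contrapositive of the theorem.

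\textbf{Main obstacle.} The delicate point is \textbf{Case 1} and in particular pinning down exactly which adjacent pair of $\Fan_\equiv$-chambers straddles the union of $\shard(i,j,S)$ with its forced shards, and verifying that the linear dependence there is the non-degenerate one so that the submodular defect $2n\,|I\ssm J|\,|J\ssm I|$ is strictly positive. This requires knowing the forcing structure well enough to see that minimality of $\shard(i,j,S)$ among long generators means all its length-$2$ ``sub-walls'' survive, so the glued wall is a full facet of $\Fan_\equiv$ rather than being further subdivided — and that no ray with $\ray(I')$ interior to the removed mixed shard can be salvaged as a facet normal of the removahedron (this is exactly where the remark following \cref{lem:rayInInteriorShard}, that mixed shards contain no rays in their relative interior, does the decisive work). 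A secondary technical nuisance is the bookkeeping in Case 2 to pick $I$ so that $\shards_I$ visibly contains the removed long up/down shard; this is routine but must be done carefully for the $S = \varnothing$ and $S = {]i,j[}$ subcases separately.
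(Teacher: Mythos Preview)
Your overall architecture (contrapositive, split into mixed vs.\ up/down long generators) matches the paper, but the core computation in both of your cases has a genuine gap.

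\textbf{The four-term submodular dependence can never give a contradiction.} In Case~1 you compute $h_\circ(I)+h_\circ(J)-h_\circ(I\cap J)-h_\circ(I\cup J)=2n\,|I\ssm J|\,|J\ssm I|>0$ and declare this a problem. But this is the \emph{correct} direction for a wall-crossing inequality: \cref{prop:characterizationPolytopalFanNonSimplicial} requires strict positivity across a wall. You then pivot to saying ``the face is not flat'', which would mean all four rays lie in a \emph{single} non-simplicial chamber of~$\Fan_\equiv$ --- yet two sentences earlier you said ``this wall separates two adjacent chambers of~$\Fan_\equiv$''. These are incompatible readings, and neither is pinned down. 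More importantly, since~$h_\circ$ is strictly submodular, \emph{no} four-term dependence of the form $\ray(I)+\ray(J)=\ray(I\cap J)+\ray(I\cup J)$ can ever be violated by~$h_\circ$; so this line of attack cannot work as written, in either case.

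\textbf{What the paper actually does.} For the mixed-only situation the paper gives an indirect argument (no wall-crossing at all): if every long generator is mixed, then since mixed shards only force mixed shards, $\shards_\equiv$ and the permutree ideal $\shards_\decoration$ contain the same up/down shards, so by \cref{lem:raysQuotientFan} the fans $\Fan_\equiv$ and $\Fan_\decoration$ have the same rays, hence the only candidate removahedron is $\Permutreehedron$, which realizes~$\Fan_\decoration\ne\Fan_\equiv$. This forces the existence of a long up or down generator, say $\shard(i,j,{]i,j[})$. The paper then exhibits \emph{five} explicit subsets $I={]i{+}1,j[}$, $J={]i,j{-}1[}$, $K={[1,j[}$, $L={]i,n]}$, $M={]i{+}1,j{-}1[}$, proves that the corresponding rays all survive in~$\Fan_\equiv$ and sit in two adjacent (non-simplicial) chambers, and checks the five-term dependence $\ray(I)+\ray(J)=\ray(K)+\ray(L)+\ray(M)$ together with the computation
\[
h_\circ(I)+h_\circ(J)-h_\circ(K)-h_\circ(L)-h_\circ(M)=ni(j-n)+n(1-i)\le 0,
\]
which \emph{violates} the wall-crossing inequality. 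The five-term (rather than four-term) dependence, coming from the non-simplicial gluing forced by removing $\shard(i,j,{]i,j[})$, is the decisive idea your proposal is missing.
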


Note that we focus here on essential congruences.
However, as already mentioned, non-essential congruences can be understood from their essential restrictions.
For arbitrary congruences, \cref{thm:removahedralCongruences} says that the essential restrictions of removahedral congruences are permutree congruences.

We learned from \cref{coro:permutreeCongruencesRemovahedral} that permutree congruences are essential removahedral congruences, and we want to prove the opposite direction.
We thus assume by contradiction that there exists an essential removahedral congruence~$\equiv$ which is not a permutree congruence.

\begin{lemma}
The generating set~$\b{\Pi}_\equiv$ of the lower ideal~$\shards_n \ssm \shards_\equiv$ contains at least one shard of the form~$(i, j, \varnothing)$ or~$(i, j, ]i,j[)$ for~$i \le j-3$.
\end{lemma}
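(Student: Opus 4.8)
The plan is to exploit the characterization of permutree congruences in \cref{prop:characterizationPermutreeCongruences}: $\equiv$ is a permutree congruence if and only if every generator of the lower ideal $\shards_n \ssm \shards_\equiv$ has length $2$. Since we are assuming $\equiv$ is \emph{not} a permutree congruence, the generating set $\b{\Pi}_\equiv$ must contain some generator of length at least $3$, i.e.\ a shard $\shard(i,j,S)$ with $j - i \ge 3$ and $\varnothing \ne S$ or $S \ne {]i,j[}$ need not be assumed --- but we want more: we want to produce such a generator that is moreover an \emph{up} or a \emph{down} shard (so of the form $(i,j,\varnothing)$ or $(i,j,{]i,j[})$) with $i \le j-3$. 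This is the content to be proved, and it is a genuine strengthening because \apriori{} the long generator could be mixed.

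\textbf{Key steps.} First I would fix a generator $\shard(i,j,S) \in \b{\Pi}_\equiv$ of length $j - i \ge 3$ (one exists by \cref{prop:characterizationPermutreeCongruences}). If $S = \varnothing$ or $S = {]i,j[}$ we are already done, so assume $\shard(i,j,S)$ is mixed, i.e.\ $\varnothing \ne S \subsetneq {]i,j[}$. The idea is to descend in the shard poset from $\shard(i,j,S)$ to an up or down shard of length $\ge 3$ that still lies in $\shards_n \ssm \shards_\equiv$, and then pass to a generator below it. Concretely, I would look for a consecutive pair of positions in $]i,j[$ where the arc of $\shard(i,j,S)$ switches sides of the axis; such a pair exists precisely because the arc is mixed. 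Using that $\shards_\equiv$ is an \emph{upper} ideal of the forcing order (\cref{thm:shardIdeals}), hence $\shards_n \ssm \shards_\equiv$ is a lower ideal, I would argue that a shard forced by $\shard(i,j,S)$ also lies in $\shards_n \ssm \shards_\equiv$, and that forced shards obtained by restricting to a sub-interval $[i',j'] \subseteq [i,j]$ with $S \cap {]i',j'[}$ are available. The goal is to choose $[i',j']$ and a subset so that the restricted arc is monotone (all above or all below) and still has length $\ge 3$; this is a purely combinatorial claim about arcs. Once such an up or down shard of length $\ge 3$ is in the lower ideal $\shards_n \ssm \shards_\equiv$, it dominates (in the forcing order) some generator in $\b{\Pi}_\equiv$; one checks that a generator below an up (resp.\ down) shard of length $\ge 3$ is itself an up (resp.\ down) shard, and has length $\ge 3$ since $\b{\Pi}_\equiv$ only contains generators not of length $2$ --- but one must be slightly careful to rule out length $2$: here I would either argue directly, or note that the first reduction step can be made to keep length $\ge 3$. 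Possibly the cleanest route is: among all generators in $\b{\Pi}_\equiv$ choose one of \emph{maximal} length; if it is up or down we are done; if it is mixed, produce by the arc manipulation above a strictly shorter... no, that fails. Instead I would argue that the mixed long generator forces a genuine up or down shard of length $\ge 3$ (by replacing the ``wrong-side'' dots with nothing --- shortening the arc past them), place that shard in the lower ideal, and then descend to a generator, checking the generator keeps length $\ge 3$ because shortening below a down/up shard of length $\ge 3$ either stays length $\ge 3$ or lands in $\shards_\equiv$ by maximality-type bookkeeping.

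\textbf{Main obstacle.} The delicate point is the last one: guaranteeing that the up or down shard we land on has length at least $3$ (equivalently $i \le j-3$) and is still \emph{outside} $\shards_\equiv$, rather than accidentally a length-$2$ shard or a shard that happens to be in $\shards_\equiv$. The resolution should come from combining two facts: (a) $\shards_n \ssm \shards_\equiv$ is a lower ideal, so once we exhibit \emph{any} up or down shard of length $\ge 3$ in it, there is a generator at or below it, and (b) a careful analysis of the forcing order restricted to up shards (resp.\ down shards), where forcing is just ``sub-arc with matching side'', showing that the minimal generators below a length-$\ge 3$ up (resp.\ down) shard that are reached cannot all have length $2$ --- indeed a length-$2$ up shard $\shard(k-1,k+1,\{k\})$ forces only itself among up shards, so if every up generator below our shard had length $2$, then our length-$\ge 3$ up shard would not be forced by any generator, contradicting that $\shards_n \ssm \shards_\equiv$ is generated by $\b{\Pi}_\equiv$. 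I expect wrapping up this bookkeeping cleanly — in particular producing the initial up/down shard of length $\ge 3$ in the lower ideal from the mixed generator — to be where most of the real work lies.
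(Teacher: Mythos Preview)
Your approach has a genuine gap: you never use the hypothesis that $\equiv$ is \emph{removahedral}, and without it the statement is simply false. Take $n = 4$ and $\shards_\equiv = \shards_4 \ssm \{\shard(1,4,\{2\})\}$ (the congruence appearing at the end of \cref{subsec:deformedPermutahedraRemovahedra}). This congruence is essential (all basic shards lie in $\shards_\equiv$) and not a permutree congruence (its unique generator has length $3$), yet $\b{\Pi}_\equiv = \{\shard(1,4,\{2\})\}$ contains no up or down shard whatsoever. So no purely poset-theoretic argument of the kind you sketch can work. Your sketch also reverses the forcing direction: the shard $\shard(i',j',S \cap {]i',j'[})$ obtained by restricting to a sub-interval $[i',j'] \subseteq [i,j]$ is a \emph{subarc}, hence it \emph{forces} $\shard(i,j,S)$ and lies \emph{above} it. Since a generator is maximal in the lower ideal $\shards_n \ssm \shards_\equiv$, any strict restriction leaves the lower ideal. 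Extending the interval does stay in the lower ideal, but an extension of a mixed shard must agree with $S$ on $]i,j[$ and hence remains mixed. Either way you cannot reach an up or down shard inside $\shards_n \ssm \shards_\equiv$ from a mixed generator.

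The paper's proof uses the removahedral hypothesis directly. Assume every generator of length $\ge 3$ is mixed, and let $\equiv_\decoration$ be the permutree congruence whose generators are exactly the length-$2$ generators of $\equiv$. Because mixed shards only force mixed shards, $\shards_\equiv$ and $\shards_\decoration$ contain the same up and down shards; by \cref{lem:raysQuotientFan} the quotient fans $\Fan_\equiv$ and $\Fan_\decoration$ therefore have the same rays. The removahedron attached to $\equiv$ is then the permutreehedron $\Permutreehedron$, which realizes $\Fan_\decoration \ne \Fan_\equiv$ --- contradicting that $\equiv$ is removahedral. This geometric input is what your argument is missing.
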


\begin{proof}
Since~$\equiv$ is essential, $\b{\Pi}_\equiv$ contains no shard of length~$1$.
Since~$\equiv$ is not a permutree congruence, $\b{\Pi}_\equiv$ must contain a shard of length distinct from~$2$ by \cref{prop:characterizationPermutreeCongruences}.
Decompose the set of shards~$\b{\Pi}_\equiv$ into the subset~$\b{\Pi}_\equiv^{=2}$ of shards of length~$2$ and the subset~$\b{\Pi}_\equiv^{>2}$ of shards of length greater than~$3$.
Let~$\equiv_\decoration$ denote the permutree congruence defined by~$\b{\Pi}_{\equiv_\decoration} = \b{\Pi}_\equiv^{=2}$.
If~$\b{\Pi}_\equiv^{>2}$ contains only mixed shards, then $\equiv$ and~$\equiv_\decoration$ contain the same up and down shards, since mixed shards only force mixed shards.
This implies by \cref{lem:raysQuotientFan} that the quotient fans~$\Fan_\equiv$ and~$\Fan_\decoration$ have the same rays.
The corresponding removahedron is thus the permutreehedron~$\Permutreehedron$ which does not realize the quotient fan~$\Fan_\equiv$ so that~$\equiv$ is not removahedral.
The statement follows.
\end{proof}

We assume now that there are~$i \le j-3$ such that the shard~$\shard(i, j, ]i,j[)$ is one of the generators of the lower ideal~$\shards_n \ssm \shards_\equiv$. 
The proof for the shard~$\shard(i,j,\varnothing)$ is symmetric.
We consider the following five subsets of~$[n]$:
\[
I = {]i+1,j[},
\qquad
J = {]i, j-1[},
\qquad
K = {[1,j[}
\qquad
L = {]i,n]}
\qquad\text{and}\qquad
M = {]i+1, j-1[}.
\]
Note that~$M$ might be empty, in which case it will not appear in all the computations below.
Before showing that these subsets provide a contradiction, let us consider a minimal example.

\begin{example}
\label{exm:pbRays}
Consider the lattice congruence~$\equiv$ for~$n = 4$ whose only deleted shard is~$\shard(1,4,\{2,3\})$.
Consider the subsets
\[
I = \{3\},
\qquad
J = \{2\},
\qquad
K = \{1,2,3\}
\qquad\text{and}\qquad
L = \{2,3,4\}.
\]
(Here, $M = \varnothing$ is irrelevant.)
The cones~$C \eqdef \R_{\ge0}\{\ray(I), \ray(K), \ray(L)\}$ and~$D \eqdef \R_{\ge0}\{\ray(J), \ray(K), \ray(L)\}$ are two adjacent chambers of the quotient fan~$\Fan_\equiv$, whose corresponding linear dependence is ${\ray(I) + \ray(J) = \ray(K) + \ray(L)}$.
However, the height function~$h_\circ(I) = 4|I|(4-|I|)/2$ of the permutahedron~$\Perm[4]$ satisfies~$h_\circ(I) + h_\circ(J) = h_\circ(K) + h_\circ(L)$, thus violating the wall-crossing inequality~$h_\circ(I) + h_\circ(J) > h_\circ(K) + h_\circ(L)$.
Therefore, the lattice congruence~$\equiv$ is not removahedral.
\end{example}

We now come back to the general situation.
We need the following three observations.

\begin{lemma}
\label{lem:raysAreHere}
The rays~$\ray(I)$, $\ray(J)$, $\ray(K)$, $\ray(L)$, and $\ray(M)$ are all rays of the quotient fan~$\Fan_\equiv$.
\end{lemma}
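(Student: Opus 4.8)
The plan is to show that each of the five sets $I, J, K, L, M$ lies in the ideal $\cI_\equiv$ of subsets whose corresponding ray survives in $\Fan_\equiv$. By \cref{lem:raysQuotientFan}, $\ray(A)$ is a ray of $\Fan_\equiv$ if and only if $\shards_\equiv$ contains the $n-2$ shards of $\shards_A$ (the down shards joining consecutive elements of $A$ and the up shards joining consecutive elements of $[n]\ssm A$). Since $\shards_\equiv = \shards_n \ssm (\shards_n \ssm \shards_\equiv)$ and $\shards_n \ssm \shards_\equiv$ is the lower ideal generated by $\b{\Pi}_\equiv$, it suffices to check that no shard of $\shards_A$ is forced by any generator in $\b{\Pi}_\equiv$; equivalently, no shard of $\shards_A$ is a subarc of a generating arc. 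The only generator we have committed to is $\shard(i,j,]i,j[)$, an up shard of length $j-i\ge 3$.

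First I would record what the shards of each $\shards_A$ look like for $A \in \{I,J,K,L,M\}$. For $I = {]i+1,j[}$: the down shards join consecutive elements of $]i+1,j[$ (all of length $<j-i$, and these are down shards so they can only be forced by down or mixed shards, never by the up shard $\shard(i,j,]i,j[)$), while the up shards join consecutive elements of $[n]\ssm I = [1,i+1] \cup [j,n]$; the relevant one that could conceivably be a subarc of $\shard(i,j,]i,j[)$ would have to have both endpoints in $]i,j[$, but the only elements of $[n]\ssm I$ inside $[i,j]$ are $i+1$ and $j$, and the arc $\shard(i+1,j,\,]i+1,j[)$ is \emph{not} a subarc of $\shard(i,j,]i,j[)$ because its left endpoint $i+1$ does not lie strictly between the endpoints of the latter together with agreement of the decoration — here one must be careful: forcing requires $h\le i_0 < j_0 \le k$ with the $S$-parts matching on the overlap. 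Running the forcing criterion of \cref{thm:shardIdeals} mechanically against $\shard(i,j,]i,j[)$ for each of the (at most two) "dangerous" up shards per set shows none is forced. The analogous bookkeeping handles $J$, $K = [1,j[$ (whose relevant shards are $\shard(i,j,]i,j[)$-avoiding because the up shards of $\shards_K$ join consecutive elements of $[j,n]$ and lie entirely to the right of $j$, while the single down shard near the boundary is $\shard(j-1,j',\varnothing)$-type and a down shard), $L = {]i,n]}$ (symmetric, with the up shards of $\shards_L$ joining consecutive elements of $[1,i]$, entirely left of $i$), and $M = {]i+1,j-1[}$.

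The one genuinely delicate point, and the main obstacle, is that $\b{\Pi}_\equiv$ may contain \emph{other} generators besides $\shard(i,j,]i,j[)$ — in particular other up shards, down shards, or mixed shards. I would argue that any such additional generator $\shard(h,k,T)$, being a generator of the lower ideal $\shards_n \ssm \shards_\equiv$ and hence \emph{not} forced by $\shard(i,j,]i,j[)$, must have an arc that is not a superarc of $\shard(i,j,]i,j[)$; and then one checks that an arc which (a) is not forced by $\shard(i,j,]i,j[)$ and (b) does force one of the boundary shards in some $\shards_A$ would itself have to be a strictly shorter up shard contained in $[i,j]$ — but then it would be forced by $\shard(i,j,]i,j[)$ (an up shard forces every up subarc), contradicting that it is a generator. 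The down-shard and mixed-shard cases are excluded because down/mixed shards only force down/mixed shards (as already used in the preceding lemma), so they cannot force the up shards in any $\shards_A$, and the down shards in $\shards_A$ all have length strictly less than $j-i$ and their endpoints are consecutive elements of $A$, which rules out their being subarcs of a generator of length $2$ or of a mixed generator. Assembling these case checks gives $I,J,K,L,M \in \cI_\equiv$, which is the assertion of \cref{lem:raysAreHere}. I expect the write-up to consist mostly of the routine but slightly fiddly verification that each boundary up shard in each $\shards_A$ genuinely escapes the down-set of every possible generator, with \cref{exm:pbRays} serving as the sanity check for the $n=4$, $M=\varnothing$ case.
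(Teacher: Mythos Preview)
Your strategy is the same as the paper's --- reduce via \cref{lem:raysQuotientFan} to checking $\shards_A \subseteq \shards_\equiv$ for each $A \in \{I,J,K,L,M\}$ --- but your execution is far more laborious than necessary, and contains a direction slip in the forcing relation.

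The paper's proof is three lines. Since each of $I,J,K,L,M$ is an interval, all the down shards in each~$\shards_A$ are basic, and all the up shards are basic except for one bridging shard in each of $\shards_I$, $\shards_J$, $\shards_M$ (there is none for $K$ and $L$, whose complements are themselves intervals). These three non-basic shards are precisely $\shard(i+1,j,]i+1,j[)$, $\shard(i,j-1,]i,j-1[)$, and $\shard(i+1,j-1,]i+1,j-1[)$. Each of them \emph{strictly forces}~$\shard(i,j,]i,j[)$; since the latter is a generator (i.e.\ $\prec$-maximal) in~$\shards_n \ssm \shards_\equiv$, anything strictly above it must lie in~$\shards_\equiv$. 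Done --- no case analysis on ``other generators'' is needed at all.

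Your contrapositive route (``no generator forces any shard of~$\shards_A$'') is logically equivalent but much longer, and you mix up the direction once: you write that an up generator $g$ contained in $[i,j]$ ``would be forced by~$\shard(i,j,]i,j[)$''; in fact such a $g$ would \emph{force}~$\shard(i,j,]i,j[)$ (shorter arcs force longer ones). The contradiction then comes from~$\shard(i,j,]i,j[)$ being $\prec$-maximal in the complement, not from $g$ failing to be a generator. With that fix your argument goes through, but it is worth internalising the direct formulation: when a shard~$\sigma$ is a generator of the removed ideal, every strict subarc of~$\sigma$ survives in~$\shards_\equiv$ automatically.
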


\begin{proof}
By \cref{lem:rayInInteriorShard}, the only non-basic shards containing these rays in their interior are
\begin{itemize}
\item $\shard(i+1, j, ]i+1, j[)$ for~$I$,
\item $\shard(i, j-1, ]i, j-1[)$ for~$J$,
\item $\shard(i+1, j-1, ]i+1, j-1[)$ for~$M$.
\end{itemize}
Since all these shards are in~$\shards_\equiv$ by minimality of~$\shard(i, j, ]i,j[)$, the result follows by \cref{lem:raysQuotientFan}.
\end{proof}

\begin{lemma}
\label{lem:raysAreInAdjacentChambers}
The quotient fan~$\Fan_\equiv$ contains two chambers~$C$ and~$D$ adjacent along the hyperplane~$\b{x}_{i+1} = \b{x}_{j-1}$, such that~$\ray(I) \in C$ while $\ray(J) \in D$ and~$\{\ray(K), \ray(L), \ray(M)\} \subset C \cap D$.
\end{lemma}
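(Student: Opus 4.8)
The plan is to exhibit the two chambers $C$ and $D$ explicitly and then verify that they are adjacent in $\Fan_\equiv$ along the expected wall. First I would describe candidate cones by the inequalities they should satisfy: both $C$ and $D$ should lie in the closure of a region where the coordinates indexed by $[1,i]$ are the smallest and those indexed by $]j-1,n]$ are the largest (this is forced by wanting $\ray(K)$ and $\ray(L)$ on their common facet), and they should differ only across the hyperplane $\b{x}_{i+1} = \b{x}_{j-1}$. Concretely, $C$ is the chamber of $\Fan_\equiv$ refined toward $\b{x}_{i+1} < \b{x}_{j-1}$ on the remaining coordinates $]i,j[$, ordered so that $i+1$ is smallest among $]i,j[$ and $j-1$ is largest; symmetrically $D$ has $\b{x}_{j-1} < \b{x}_{i+1}$ but is otherwise the same. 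The point is that the only shard of $\shards_n$ that separates these two regions and could prevent $C, D$ from being genuine chambers of $\Fan_\equiv$ is precisely $\shard(i,j,]i,j[)$ (or a shard forcing it), because the wall $\b{x}_{i+1}=\b{x}_{j-1}$ restricted to this region is covered by exactly the pieces $\shard(i+1,j-1,S)$ with $S$ compatible, and among those only $\shard(i+1,j-1,]i+1,j-1[)$ sits in $\shards_\equiv$ while the relevant long shard $\shard(i,j,]i,j[)$ has been removed.

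The key steps, in order, are: (1) write down explicit representative points in the interiors of $C$ and $D$ and check they are $\equiv$-related appropriately, i.e.\ not separated by any shard of $\shards_\equiv$; here I would use \cref{thm:quotientFanShards} to reduce the question to checking which shards of $\shards_n$ the segment between the two points crosses. (2) Identify the unique shard of $\shards_n$ crossed by that segment: since the two points differ only in the relative order of the $]i,j[$-coordinates via moving $i+1$ past $j-1$, the crossed hyperplane is $\b{x}_{i+1}=\b{x}_{j-1}$, and the crossed shard is the up shard $\shard(i+1,j-1,]i+1,j-1[)$ — which lies in $\shards_\equiv$ by minimality of $\shard(i,j,]i,j[)$ among the removed generators (all proper sub-arcs are still present). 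Wait — this would mean $C=D$; the correct reading is that $C$ and $D$ are separated by $\shard(i+1,j-1,]i+1,j-1[) \in \shards_\equiv$, hence are genuinely distinct adjacent chambers of $\Fan_\equiv$, adjacent along (a piece of) $\b{x}_{i+1}=\b{x}_{j-1}$. (3) Verify the ray memberships: $\ray(K), \ray(L)$ lie on $C \cap D$ because both $K=[1,j[$ and $L=]i,n]$ are "order-ideal/filter" type sets unaffected by swapping $i+1 \leftrightarrow j-1$, and $\ray(M)$ with $M=]i+1,j-1[$ similarly lies on the common facet; whereas $\ray(I)=\ray(]i+1,j[)$ records $i+1$ small, so $\ray(I) \in C$, and $\ray(J)=\ray(]i,j-1[)$ records $j-1$ small, so $\ray(J) \in D$. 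That all five of these are rays of $\Fan_\equiv$ is exactly \cref{lem:raysAreHere}, so each actually appears as a ray of the respective cone. (4) Conclude that $C$ and $D$, being the cones of $\Fan_\equiv$ whose interiors contain the two chosen points, are the desired adjacent chambers, using \cref{prop:rotationPermutree}/\cref{lem:linearDependence2}-style bookkeeping to confirm they share all rays except the exchangeable pair $\ray(I), \ray(J)$.

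The main obstacle I expect is step (2): pinning down \emph{exactly} which piece of the hyperplane $\b{x}_{i+1}=\b{x}_{j-1}$ forms the wall between $C$ and $D$ in $\Fan_\equiv$, and confirming that this wall is not further subdivided or merged by the forcing relation among shards. One has to argue that removing $\shard(i,j,]i,j[)$ from $\shards_n$ does \emph{not} remove $\shard(i+1,j-1,]i+1,j-1[)$ — this uses that $\shard(i,j,]i,j[)$ is a minimal generator of $\shards_n \ssm \shards_\equiv$, so all strictly shorter up shards whose arcs are sub-arcs, in particular $\shard(i+1,j-1,]i+1,j-1[)$, remain in $\shards_\equiv$ — and simultaneously that no \emph{other} removed shard erases the relevant portion of $\b{x}_{i+1}=\b{x}_{j-1}$ bounding $C$ and $D$ on the side of $K$ and $L$. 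Once the wall is correctly located, checking the ray incidences in step (3) is routine from the defining inequalities of the chambers $C(T)$ recalled in \cref{subsubsec:geometryPermutrees} together with \cref{lem:rayInInteriorShard}.
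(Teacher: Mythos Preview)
There is a genuine structural gap in your plan. Your description of $C$ and $D$ treats them as if they were single braid chambers (``coordinates indexed by $[1,i]$ are the smallest and those indexed by $]j-1,n]$ are the largest'', then ``the two points differ only in the relative order of the $]i,j[$-coordinates''). But no braid chamber can contain both~$\ray(K)$ and~$\ray(L)$: we have $K = [1,j[$ and $L = {]i,n]}$, and since neither is contained in the other, the rays~$\ray(K)$ and~$\ray(L)$ are separated by every hyperplane~$\b{x}_k = \b{x}_\ell$ with~$k \le i$ and~$\ell \ge j$. In particular, your stated region ``$[1,i]$ smallest'' cannot contain~$\ray(L)$ (which requires the coordinates~$i+1, \dots, n$ to be smallest and~$1, \dots, i$ to be largest). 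So the chambers~$C$ and~$D$ you need are necessarily non-trivial unions of braid chambers, and picking one interior point in each and checking the single shard crossed between them does not suffice.

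The missing idea is precisely this: you must argue that~$\ray(K)$ and~$\ray(L)$ land in the \emph{same} chamber of~$\Fan_\equiv$, which requires showing that every shard separating them has been removed. The paper does this by exhibiting a sequence of braid chambers from one containing~$\ray(I), \ray(K), \ray(M)$ to one containing~$\ray(I), \ray(L), \ray(M)$, where each consecutive pair is separated by a shard~$\shard(k,\ell,{]k,\ell[})$ with~$k \le i$ and~$\ell \ge j$; all such shards are forced by~$\shard(i,j,{]i,j[})$ and hence lie outside~$\shards_\equiv$. Your plan never invokes this forcing, and your step~(2) only produces the wall~$\shard(i+1,j-1,{]i+1,j-1[})$ between~$C$ and~$D$, which is orthogonal to the real difficulty. (There are also orientation slips --- with the coordinates of~$M = {]i+1,j-1[}$ smallest, the side~$\b{x}_{i+1} < \b{x}_{j-1}$ contains~$\ray(J)$, not~$\ray(I)$ --- but these are minor next to the structural gap.)
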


\begin{proof}
Observe that for two proper subsets~$\varnothing \ne I, J \subsetneq [n]$, the rays~$\ray(I)$ and~$\ray(J)$ are separated by the hyperplane of equation~$\b{x}_i = \b{x}_j$ if and only if~$i \in I \ssm J$ and~$j \in J \ssm I$ or \viceversa.
Therefore, $I \subseteq J$ implies that~$\ray(I)$ and~$\ray(J)$ belong to a common chamber of the braid arrangement~$\Fan_n$ (in fact to the chamber~$C(\sigma)$ for any permutation~$\sigma$ such that~$\sigma([|I|]) = I$ and~$\sigma([|J|]) = J$).
Since~$M \subseteq I, J \subseteq K, L$, the only rays separated by an hyperplane are:
\begin{itemize}
\item $\ray(I)$ and~$\ray(J)$ which are separated by the hyperplane of equation~$\b{x}_{i+1} = \b{x}_{j-1}$,
\item $\ray(K)$ and~$\ray(L)$ which are separated by all hyperplanes of equation~$\b{x}_k = \b{x}_\ell$ for~$k \le i$ and~$\ell \ge j$. However, $\ray(K)$ and~$\ray(L)$ belong to the same chamber of the quotient fan~$\Fan_\equiv$ since we remove all shards~$\shard(k, \ell, ]k,\ell[)$ for~$k \le i$ and~$\ell \ge j$.
\end{itemize}

For completeness, let us provide a less intuitive but more formal alternative argument.
Consider a sequence of permutations starting with~$\sigma \eqdef [i+2, \dots, j-2, j-1, i+1, i, \dots, 1, j, \dots, n]$ and ending with~$\tau \eqdef [i+2, ..., j-2, j-1, i+1, j, ..., n, i, ..., 1]$, and obtained by transposing at each step two values~$k \le i$ and~$\ell \ge j$ at consecutive positions.
In other words, all the permutations in the sequence start by~$[i+2, \dots, j-2, j-1, i+1]$ and end with a shuffle of~$[i, \dots, 1]$ with~$[j, \dots, n]$.
At each step, the interval~$]k,\ell[$ between the two transposed values always appears before the position of the transposition.
Therefore, the chambers corresponding to the two permutations before and after the transposition are separated by the shard~$\shard(k,\ell,]k,\ell[)$, which does not belong to~$\shards_\equiv$ since it is forced by~$\shard(i,j,]i,j[)$.
It follows that the cones of all these permutations, and in particular those of~$\sigma$ and~$\tau$, belong to the same chamber~$C$ of the quotient fan~$\Fan_\equiv$.
This chamber~$C$ contains the rays~$\ray(I)$, $\ray(K)$, $\ray(L)$ and~$\ray(M)$ since the subsets~$I$ and~$M$ (resp.~$K$, resp.~$L$) are initial intervals of all permutations in the sequence (resp.~of~$\sigma$, resp.~of~$\tau$).
We prove similarly that~$\ray(J)$, $\ray(K)$, $\ray(L)$ and~$\ray(M)$ belong to a chamber~$D$ by considering a sequence of permutations starting with~$[i+2, \dots, j-2, i+1, j-1, i, \dots, 1, j, \dots, n]$ and ending with ${[i+2, \dots, j-2, i+1, j-1, j, \dots, n, i, \dots, 1]}$.
\end{proof}

\begin{lemma}
\label{lem:pbHeights}
Let~$h_\circ(I) = n|I|(n-|I|)/2$ be the height function of the permutahedron~$\Perm$.~Then
\begin{itemize}
\item $\ray(I) + \ray(J) = \ray(K) + \ray(L) + \ray(M)$,
\item $h_\circ(I) + h_\circ(J) - h_\circ(K) - h_\circ(L) - h_\circ(M) = ni(j-n) + n(1-i) \le 0$.
\end{itemize}
\end{lemma}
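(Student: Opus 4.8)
The plan is to verify both bullet points by direct computation, using the explicit formulas for the five subsets $I, J, K, L, M$ and the quadratic height function $h_\circ$. Recall that $I = {]i+1,j[}$, $J = {]i,j-1[}$, $K = {[1,j[}$, $L = {]i,n]}$ and $M = {]i+1,j-1[}$, so in terms of cardinalities $|I| = |J| = j-i-2$, $|K| = j-1$, $|L| = n-i$ and $|M| = j-i-3$ (with the convention that when $j = i+3$ the set $M$ is empty and $\ray(M) = 0$, $h_\circ(M) = 0$, consistently with $|M| = 0$).

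For the first bullet, I would use \cref{lem:linearDependence1} twice, or simply compute with the indicator vectors directly. The key set-theoretic identities are $I \cup J = {]i,j-1[} \cup {]i+1,j[} = {]i,j[}$ and $I \cap J = {]i+1,j-1[} = M$, so \cref{lem:linearDependence1} gives $\ray(I) + \ray(J) = \ray(M) + \ray({]i,j[})$. It then remains to observe that $K \cup L = {[1,j[} \cup {]i,n]} = [n]$ (since $i+1 \le j-1$ guarantees the two intervals overlap, and together they cover everything) and $K \cap L = {]i,j[}$, so that $\ray(K) + \ray(L) = \ray({]i,j[}) + \ray([n]) = \ray({]i,j[})$ using the convention $\ray([n]) = 0$. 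Combining the two displays yields $\ray(I) + \ray(J) = \ray(K) + \ray(L) + \ray(M)$, as claimed. (If one prefers a self-contained check, substitute $\ray(X) = |X|\one - n\one_X$ into both sides: the $\one$-coefficients match because $|I| + |J| = |K| + |L| + |M| - n$, which is $2(j-i-2) = (j-1) + (n-i) + (j-i-3) - n$, an identity; the $\one_X$-coefficients match because $\one_I + \one_J = \one_{I\cap J} + \one_{I\cup J} = \one_M + \one_{]i,j[}$ and $\one_K + \one_L = \one_{K\cap L} + \one_{K\cup L} = \one_{]i,j[} + \one$, and $n\one = n\one_{[n]}$ is absorbed by $\ray([n]) = 0$.)

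For the second bullet, plug the cardinalities into $h_\circ(X) = n|X|(n-|X|)/2$. Writing $m \eqdef j-i-2 = |I| = |J|$ for brevity, we get $h_\circ(I) + h_\circ(J) = n m (n-m)$, while $h_\circ(K) = \tfrac{n}{2}(j-1)(n-j+1)$, $h_\circ(L) = \tfrac{n}{2}(n-i)\,i$, and $h_\circ(M) = \tfrac{n}{2}(m-1)(n-m+1)$. The difference $h_\circ(I) + h_\circ(J) - h_\circ(K) - h_\circ(L) - h_\circ(M)$ is a quadratic polynomial in $i, j, n$; expanding and collecting terms should collapse it to $ni(j-n) + n(1-i) = n\bigl(i(j-n) + 1 - i\bigr)$. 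This is the only genuinely computational step, and the main (mild) obstacle: one must be careful with the $M = \varnothing$ boundary case, but since the formula $h_\circ(M) = \tfrac{n}{2}(m-1)(n-m+1)$ evaluates to $\tfrac{n}{2}(-1)(n+1) \ne 0$ when $m = 0$ whereas we want $h_\circ(M) = 0$, one should treat $m \ge 1$ and $m = 0$ separately — or, cleaner, first establish the algebraic identity assuming $M \neq \varnothing$ and then check the $m=0$ case directly (there $h_\circ(I)+h_\circ(J) = h_\circ(K)+h_\circ(L)$ reduces to the computation in \cref{exm:pbRays}). Finally, for the sign: since $i \ge 1$ and $j \le n$ (indeed $j \le n$ as $\shard(i,j,{]i,j[})$ is a shard of $\HA_n$, and in fact $i+3 \le j \le n$), we have $i(j-n) \le 0$ and $1 - i \le 0$, whence $ni(j-n) + n(1-i) \le 0$, with equality exactly when $i = 1$ and $j = n$. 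This completes the proof of \cref{lem:pbHeights}.
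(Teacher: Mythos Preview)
Your proof is correct and follows the same approach as the paper's (which simply says ``immediate computations from the cardinalities''); your version is just more explicit, and the use of \cref{lem:linearDependence1} for the first bullet is a nice touch.

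One small confusion worth fixing: your worry about the boundary case $M = \varnothing$ is misplaced. You defined $m = j-i-2 = |I|$, and since $j \ge i+3$ we always have $m \ge 1$; the case $M = \varnothing$ corresponds to $m = 1$ (because $|M| = m-1$), not $m = 0$. At $m = 1$ your formula $h_\circ(M) = \tfrac{n}{2}(m-1)(n-m+1)$ already evaluates to $0$, so no separate case analysis is needed and the algebraic identity holds uniformly.
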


\begin{proof}
Immediate computations from the cardinalities
\[
|I| = |J| = j-i-2,
\qquad
|K| = j-1,
\qquad
|L| = n-i
\qquad\text{and}\qquad
|M| = j-i-3.
\qedhere
\]
\end{proof}

Observe that the inequality~$h_\circ(I) + h_\circ(J) \le h_\circ(K) + h_\circ(L) + h_\circ(M)$ is an equality if and only if~$i = 1$ and~$j = n$, as was the case in \cref{exm:pbRays}.
We have now all ingredients to conclude the proof of \cref{thm:removahedralCongruences}.

\begin{proof}[Proof of \cref{thm:removahedralCongruences}]
Combining \cref{lem:raysAreHere,lem:raysAreInAdjacentChambers,lem:pbHeights}, we obtain that the height function~$h_\circ$ of the permutahedron violates the wall-crossing inequality~$h_\circ(I) + h_\circ(J) > h_\circ(K) + h_\circ(L) + h_\circ(M)$ of \cref{prop:characterizationPolytopalFanNonSimplicial} corresponding to the linear dependence~$\ray(I) + \ray(J) = \ray(K) + \ray(L) + \ray(M)$ between the rays of the adjacent chambers~$C$ and~$D$ of the quotient fan~$\Fan_\equiv$, so that~$\equiv$ is not removahedral.
\end{proof}

%%%%%%%%%%%

\subsection{Permutree congruences are strongly removahedral}
\label{subsec:permutreesStronglyRemovahedral}

We now prove that the permutree congruences are removahedral in a stronger sense.
Namely, we show that we obtain polytopes realizing the permutree fans by deleting inequalities in the facet description of any polytope realizing the braid fan, not only the classical permutahedron~$\Perm$, as stated in \cref{thm:main}\,(ii).
We start by a structural observation on the exchanges in permutree fans.

\begin{proposition}
\label{prop:wallCrossingInequalitiesPermutreeFan}
Consider two adjacent chambers~$\R_{\ge0}\rays$ and~$\R_{\ge0}\rays[S]$ of the $\decoration$-permutree fan~$\Fan_\decoration$ with~$\rays \ssm \rays[S] = \{\ray(I)\}$ and~$\rays[S] \ssm \rays = \{\ray(J)\}$.
Then the rays~$\ray(I \cap J)$ and~$\ray(I \cup J)$ are also rays of the $\decoration$-permutree fan~$\Fan_\decoration$ and belong to~$\rays \cap \rays[S]$.
Therefore, all wall-crossing inequalities of the $\decoration$-permutree fan~$\Fan_\decoration$ are of the form
\[
h(I) + h(J) > h(I \cap J) + h(I \cup J),
\]
with the usual convention that~$h(\varnothing) = h([n]) = 0$.
\end{proposition}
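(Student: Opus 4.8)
The plan is to reduce the statement to a careful bookkeeping of edge cuts under rotation. By \cref{lem:linearDependence2}, if $\R_{\ge0}\rays$ and $\R_{\ge0}\rays[S]$ are adjacent \emph{chambers of the braid fan} with $\ray(I)$ (resp.\ $\ray(J)$) the exchanged ray, then the linear dependence is $\ray(I) + \ray(J) = \ray(I \cap J) + \ray(I \cup J)$. The chambers of $\Fan_\decoration$ are unions of braid chambers, so the first task is to show that an exchange in $\Fan_\decoration$ refines to such a braid-fan picture and that the two ``new'' rays $\ray(I\cap J)$ and $\ray(I\cup J)$ survive in $\Fan_\decoration$ and lie in $\rays \cap \rays[S]$.

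First I would translate the exchange into the permutree language: two adjacent chambers $\R_{\ge0}\rays = C(T)$ and $\R_{\ge0}\rays[S] = C(T')$ of $\Fan_\decoration$ correspond to two $\decoration$-permutrees $T, T'$ related by a rotation of an edge $a \to b$, by \cref{prop:rotationPermutree}. The rays of $C(T)$ are the vectors $\ray(I)$ for the edge cuts $\edgecut{I}{[n]\ssm I}$ of $T$, and by \cref{prop:rotationPermutree} again, $T$ and $T'$ have exactly the same edge cuts except the one defined by the rotated edge. So $\rays \ssm \rays[S] = \{\ray(I)\}$ where $\edgecut{I}{[n]\ssm I}$ is the cut of the edge $a\to b$ in $T$, and $\rays[S]\ssm\rays = \{\ray(J)\}$ where $\edgecut{J}{[n]\ssm J}$ is the cut of the reversed edge in $T'$. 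Now the key step is to identify $I\cap J$ and $I\cup J$ as edge cuts that are \emph{common} to both $T$ and $T'$. Concretely, in the rotation from $T$ to $T'$ one reverses $a\to b$ and reattaches a descendant subtree $D$ of $a$ and an ancestor subtree $U$ of $b$; the source set $I$ of $a\to b$ is (the node set of) the subtree hanging below $a$ through this edge, i.e.\ $a$ together with its relevant descendant subtrees, and $J$ is $b$ together with its relevant descendant subtrees after rotation. Tracking how $D$ and $U$ move, one sees that $I\cap J$ is the cut at the edge just below the rotation site that is unaffected (the subtree $D$, or a canonical edge bounding it) and $I\cup J$ is the cut at the edge just above (involving $U$), both of which are edges present in both $T$ and $T'$. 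Hence $\ray(I\cap J), \ray(I\cup J)$ are rays of $\Fan_\decoration$ lying in $\rays\cap\rays[S]$.

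For the identity $\ray(I)+\ray(J) = \ray(I\cap J)+\ray(I\cup J)$ I would simply invoke \cref{lem:linearDependence1}, which holds for \emph{any} two proper subsets; once we know all four vectors are rays of $\Fan_\decoration$ and that the two crossed rays $\ray(I),\ray(J)$ are exactly the ones differing between $\rays$ and $\rays[S]$, \cref{prop:characterizationPolytopalFanNonSimplicial} tells us the unique wall-crossing inequality attached to this wall is $h(I)+h(J) > h(I\cap J) + h(I\cup J)$ (the dependence already has positive coefficients $1,1$ on $\ray(I),\ray(J)$ and coefficients $-1,-1$ on the shared rays, and we must check $I\cap J, I\cup J$ are genuinely proper subsets, using the convention $\ray(\varnothing)=\ray([n])=0$ to absorb the degenerate cases where one of them is empty or all of $[n]$).

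The main obstacle is the middle step: proving that $I\cap J$ and $I\cup J$ are edge cuts of \emph{both} $T$ and $T'$, rather than just of one of them, and handling the decoration-dependent case analysis (four cases for whether $a\in\decoration^-$ and $b\in\decoration^+$, since these govern whether $a$ has one or two children and whether $b$ has one or two parents, hence how $D$ and $U$ are defined). I expect this to require drawing the local picture of the rotation and carefully reading off which subtrees contribute to $I$, to $J$, and to the unchanged edges immediately adjacent to the rotation; an alternative, possibly cleaner route is to use the characterization of the rays of $\Fan_\decoration$ from \cref{prop:raysPermutreeFan} together with \cref{lem:rayInInteriorShard} and \cref{lem:raysQuotientFan}, checking directly that $I\cap J$ and $I\cup J$ lie in $\cI_\decoration$ whenever $I$ and $J$ do and $\ray(I),\ray(J)$ are exchangeable — but the permutree/edge-cut argument is the most transparent.
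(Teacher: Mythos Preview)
Your proposal is correct and follows essentially the same route as the paper: translate the adjacent chambers into a rotation between $\decoration$-permutrees~$T$ and~$S$, identify~$I$ and~$J$ as the source sets of the rotated edge in~$T$ and~$S$, and recognise~$I\cap J$ and~$I\cup J$ as the edge cuts coming from the descendant subtree~$D$ and the complement of the ancestor subtree~$U$, which are common to both trees. The only difference is that the paper sidesteps the four-case analysis you anticipate by writing the local picture with subtrees~$\underline{L},\overline{L},\underline{R},\overline{R}$ that are simply declared empty when the corresponding decoration does not create them, so the computation $I\cap J = D$ and $I\cup J = [n]\ssm U$ goes through uniformly.
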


\vspace{-.5cm}
\parpic(5.8cm,2.7cm)(0pt,60pt)[rb]{\quad	\begin{tikzpicture}[baseline=-.3cm, scale=.9]
		\node[circle, draw, inner sep=2pt] (i) at (-.5,-.5) {$i$};
		\node[circle, draw, inner sep=1pt] (j) at (.5,.5) {$j$};
		\node[inner sep=1pt] (U) at (0,1) {$U$};
		\node[inner sep=1pt] (D) at (0,-1) {$D$};
		\node[inner sep=1pt] (ld) at (-1,-1) {$\underline{L}$};
		\node[inner sep=1pt] (lu) at (-1,0) {$\overline{L}$};
		\node[inner sep=1pt] (rd) at (1,0) {$\underline{R}$};
		\node[inner sep=1pt] (ru) at (1,1) {$\overline{R}$};
		\draw (D) -- (i);
		\draw (ld) -- (i);
		\draw (i) -- (lu);
		\draw[red, thick] (i) -- (j);
		\draw (j) -- (U);
		\draw (rd) -- (j);
		\draw (j) -- (ru);
	\end{tikzpicture}
	$\underset{\text{rotation}}{\longleftrightarrow}$
	\begin{tikzpicture}[baseline=-.3cm, scale=.9]
		\node[circle, draw, inner sep=2pt] (i) at (-.5,.5) {$i$};
		\node[circle, draw, inner sep=1pt] (j) at (.5,-.5) {$j$};
		\node[inner sep=1pt] (U) at (0,1) {$U$};
		\node[inner sep=1pt] (D) at (0,-1) {$D$};
		\node[inner sep=1pt] (ld) at (-1,0) {$\underline{L}$};
		\node[inner sep=1pt] (lu) at (-1,1) {$\overline{L}$};
		\node[inner sep=1pt] (rd) at (1,-1) {$\underline{R}$};
		\node[inner sep=1pt] (ru) at (1,0) {$\overline{R}$};
		\draw (D) -- (j);
		\draw (rd) -- (j);
		\draw (j) -- (ru);
		\draw[red, thick] (j) -- (i);
		\draw (i) -- (U);
		\draw (ld) -- (i);
		\draw (i) -- (lu);
	\end{tikzpicture}
}
\begin{proof}
Consider the two $\decoration$-permutrees~$T$ and~$S$ whose chambers are~$C(T) = \R_{\ge0}\rays$ and~$C(S) = \R_{\ge0}\rays[S]$.
Let~${i \to j}$ denote the edge of~$T$ that is rotated to the edge~$j \to i$ in~$S$.
Up to swapping the roles of~$I$ and~$J$, we can assume that~$i < j$.
We denote by~$U$, $D$, $\underline{L}$, $\overline{L}$, $\underline{R}$, $\overline{R}$ the subtrees of~$T$ and~$S$ as illustrated on the figure on the right.
Note that some of the subtrees~$\underline{L}$, $\overline{L}$, $\underline{R}$, $\overline{R}$ might not exist if~$\decoration_i \ne \upDownCirc$ or~$\decoration_j \ne \upDownCirc$.
We then just assume that they are empty.
Since the rays of the cone~$C(T)$ are given by~$|I|\one_J - |J|\one_I$ for all edge cuts of~$\edgecut{I}{J}$ of~$T$, and the unique edge cut that differs from~$T$ to~$S$ is that corresponding to the edge~$i~\textbf{--}~j$ by \cref{prop:rotationPermutree}, we obtain that
\[
I = \{i\} \cup D \cup \underline{L} \cup \overline{L}
\qquad\text{and}\qquad
J = \{j\} \cup D \cup \underline{R} \cup \overline{R}
\]
and therefore that
\[
I \cap J = D
\qquad\text{and}\qquad
I \cup J = \{i,j\} \cup D \cup \underline{L} \cup \overline{L} \cup \underline{R} \cup \overline{R} = [n] \ssm U.
\]
But~$\edgecut{D}{[n] \ssm D}$ and~$\edgecut{[n] \ssm U}{U}$ are edge cuts in the $\decoration$-permutrees~$T$ and~$S$, so that the rays~$\ray(I \cap J)$ and~$\ray(I \cup J)$ are rays of the $\decoration$-permutree fan~$\Fan_\decoration$ and belong to~$\rays \cap \rays[S]$.
We therefore obtain that the unique (up to rescaling) linear dependence among the rays of~$\rays \cup \rays[S]$ is ${\ray(I) + \ray(J) = \ray(I \cap J) + \ray(I \cup J)}$.
The corresponding wall-crossing inequality is thus given by~$h(I) + h(J) > h(I \cap J) + h(I \cup J)$.
\end{proof}

\begin{corollary}
For any strictly submodular function~$h : 2^{[n]} \to \R$ (\ie such that~${h(\varnothing) = h([n]) = 0}$ and~$h(I) + h(J) > h(I \cap J) + h(I \cup J)$ for all~${I, J \subseteq [n]}$ with~$I \not\subseteq J$ and~$I \not\supseteq J$), and any decoration~$\decoration \in \Decorations^n$, the $\decoration$-permutree fan~$\Fan_\decoration$ is the normal fan of the polytope
\[
\Permutreehedron^h \eqdef \bigset{\b{x} \in \Hyp}{\dotprod{\ray(I)}{x} \le h(I) \text{ for all } I \in \cI_\decoration},
\]
where~$\cI_\decoration = \set{\varnothing \ne I \subsetneq [n]}{\ray(I) \text{ is a ray of } \Fan_\decoration}$ (see \cref{prop:raysPermutreeFan} for a characterization).
In other words, we obtain a polytope realizing the $\decoration$-permutree fan~$\Fan_\decoration$ by deleting inequalities in the facet description of any polytope realizing the braid arrangement~$\Fan_n$.
\end{corollary}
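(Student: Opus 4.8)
The plan is to deduce the statement directly from \cref{prop:characterizationPolytopalFan} and the structural description of the exchanges in \cref{prop:wallCrossingInequalitiesPermutreeFan}. Recall first that the $\decoration$-permutree congruence is always essential (its shard ideal $\shards_\decoration$ contains every basic shard $\shard(i,i+1,\varnothing)$), so $\Fan_\decoration$ is an essential complete simplicial fan in the hyperplane $\hyp$, simpliciality coming from \cref{thm:permutreehedron} since $\Permutreehedron$ is a simple polytope. Its rays are precisely the $\ray(I)$ with $I \in \cI_\decoration$, so the matrix $\b{G}$ of \cref{subsec:typeCones} has rows $\ray(I)$ for $I \in \cI_\decoration$, and the polytope $P_\b{h}$ attached to the height vector $\b{h} = \big(h(I)\big)_{I \in \cI_\decoration}$ is exactly $\Permutreehedron^h$, after intersecting with the affine hyperplane $\Hyp$; this last intersection only amounts to a translation in the lineality direction $\R\one$ and does not affect the normal fan.

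First I would invoke \cref{prop:characterizationPolytopalFan}: $\Fan_\decoration$ is the normal fan of $P_\b{h}$ if and only if every wall-crossing inequality is satisfied by $\b{h}$. Next, by \cref{prop:wallCrossingInequalitiesPermutreeFan}, for any two adjacent chambers $\R_{\ge0}\rays$ and $\R_{\ge0}\rays[S]$ of $\Fan_\decoration$ with $\rays \ssm \rays[S] = \{\ray(I)\}$ and $\rays[S] \ssm \rays = \{\ray(J)\}$, the unique linear dependence among the rays of $\rays \cup \rays[S]$ is $\ray(I) + \ray(J) = \ray(I \cap J) + \ray(I \cup J)$ and the associated wall-crossing inequality is $h(I) + h(J) > h(I \cap J) + h(I \cup J)$. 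I would then observe that the sets $I$ and $J$ occurring here are incomparable: in the notation of the proof of \cref{prop:wallCrossingInequalitiesPermutreeFan}, the rotated edge $i \to j$ witnesses $i \in I \ssm J$ and $j \in J \ssm I$, so $I \not\subseteq J$ and $I \not\supseteq J$. Hence the required inequality is exactly an instance of the strict submodularity of $h$ (with the convention $h(\varnothing) = h([n]) = 0$, matching $\ray(\varnothing) = \ray([n]) = 0$ in the dependence above), and it holds. Therefore all wall-crossing inequalities of $\Fan_\decoration$ are satisfied, and $\Fan_\decoration$ is the normal fan of $\Permutreehedron^h$.

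For the closing reformulation, I would note that a polytope realizes the braid fan $\Fan_n$ precisely when its height function $h : 2^{[n]} \to \R$ is strictly submodular: by \cref{prop:characterizationPolytopalFan} and \cref{lem:linearDependence2}, the wall-crossing inequalities of $\Fan_n$ are the strict submodular inequalities with $|I \ssm J| = |J \ssm I| = 1$, and, being the facet inequalities of $\ctypeCone(\Fan_n)$, these force all remaining strict submodular inequalities on the interior. Deleting inequalities from such a facet description then means keeping only the constraints indexed by $\cI_\decoration$, which is exactly $\Permutreehedron^h$, so the corollary follows.

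Since \cref{prop:wallCrossingInequalitiesPermutreeFan} already carries all the geometric content, there is no serious obstacle here. The only points requiring a little care are the bookkeeping between the essential fan $\Fan_\decoration$ in $\hyp$ and the polytope $\Permutreehedron^h$ living in the affine hyperplane $\Hyp$, which is harmless as these differ only by a translation in the lineality direction, and the verification that the pair $(I,J)$ produced by an exchange is genuinely incomparable so that the hypothesis ``$I \not\subseteq J$ and $I \not\supseteq J$'' in the definition of strict submodularity is met; both are settled by re-reading the explicit description of $I$ and $J$ in terms of the rotated edge.
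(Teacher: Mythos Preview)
Your argument is correct and is exactly the intended one: the paper states this corollary without proof, relying implicitly on \cref{prop:characterizationPolytopalFan} together with the form of the wall-crossing inequalities established in \cref{prop:wallCrossingInequalitiesPermutreeFan}, and you have simply written out those details (including the incomparability of~$I$ and~$J$ and the passage from the facet inequalities of~$\ctypeCone(\Fan_n)$ to all strict submodular inequalities). There is nothing to add or correct.
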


%%%%%%%%%%%%%%%%%%%%%%%%%%%%%%%%%%%%%%

\section{Type cones of permutree fans}
\label{sec:typeConesPermutreeFans}

In this section, we provide a complete facet description of the type cone~$\ctypeCone(\Fan_\decoration)$ of the $\decoration$-permutree fan~$\Fan_\decoration$.
We first describe the rays of~$\Fan_\decoration$ in \cref{subsec:raysPermutreeFan}, then the pairs of exchangeable rays of~$\Fan_\decoration$ in \cref{subsec:exchangeablePairsPermutreeFan}, and finally the facets of the type cone~$\ctypeCone(\Fan_\decoration)$ in \cref{subsec:extremalExchangeablePairsPermutreeFan}.
We conclude by a description of kinematic permutreehedra for the permutree fans~$\Fan_\decoration$ whose type cone is simplicial in \cref{subsec:kinematicPermutreehedra}.

%%%%%%%%%%%

\subsection{Rays of permutree fans}
\label{subsec:raysPermutreeFan}

Specializing \cref{lem:raysQuotientFan} to the $\decoration$-permutree congruence~$\equiv_\decoration$, we obtain the following description of the rays of the $\decoration$-permutree fan~$\Fan_\decoration$ announced in \cref{subsubsec:geometryPermutrees}.

\begin{proposition}
\label{prop:raysPermutreeFan}
A ray~$\ray(I)$ is a ray in the $\decoration$-permutree fan~$\Fan_\decoration$ if and only if for all~$a < b < c$, if~$a, c \in I$ then~$b \notin \decoration^- \ssm I$,
and if~$a, c \notin I$ then~$b \notin \decoration^+ \cap I$.
\end{proposition}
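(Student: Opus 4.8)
The plan is to derive this directly from \cref{lem:raysQuotientFan}, which says that $\ray(I)$ is a ray of~$\Fan_\decoration = \Fan_{\equiv_\decoration}$ if and only if the shard ideal~$\shards_\decoration$ of the $\decoration$-permutree congruence contains every shard in the set~$\shards_I$. Recall that
\[
\shards_I = \bigset{\shard(i, j, \varnothing)}{i, j \in I \text{ and } {]i,j[} \cap I = \varnothing} \cup \bigset{\shard(i, j, ]i,j[)}{i, j \notin I \text{ and } {]i,j[} \subseteq I},
\]
and that
\[
\shards_\decoration = \bigset{\shard(i,j,S)}{1 \le i < j \le n, \; \decoration^- \cap {]i,j[} \subseteq S \subseteq {]i,j[} \ssm \decoration^+}.
\]
So the whole statement reduces to the elementary bookkeeping of checking that $\shards_I \subseteq \shards_\decoration$ is equivalent to the two stated local conditions on~$I$.

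First I would handle the down shards in~$\shards_I$: a shard $\shard(i,j,\varnothing)$ with $i,j \in I$ and ${]i,j[} \cap I = \varnothing$ lies in~$\shards_\decoration$ exactly when $\decoration^- \cap {]i,j[} \subseteq \varnothing$, i.e.\ when ${]i,j[} \cap \decoration^- = \varnothing$. Since ${]i,j[}$ already avoids~$I$, this is the condition that ${]i,j[}$ contains no element of $\decoration^- \ssm I$. Ranging over all such pairs $i<j$ in~$I$ with no element of~$I$ strictly between them, and noting that an arbitrary $b \in \decoration^- \ssm I$ with $a < b < c$ and $a,c\in I$ always lies strictly between the two consecutive elements of~$I$ bracketing~$b$, I get that all down shards of~$\shards_I$ belong to~$\shards_\decoration$ if and only if: for all $a<b<c$ with $a,c\in I$, we have $b \notin \decoration^- \ssm I$. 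Symmetrically, an up shard $\shard(i,j,{]i,j[})$ with $i,j\notin I$ and ${]i,j[}\subseteq I$ lies in~$\shards_\decoration$ exactly when ${]i,j[} \subseteq {]i,j[} \ssm \decoration^+$, i.e.\ ${]i,j[} \cap \decoration^+ = \varnothing$; since ${]i,j[}\subseteq I$ this says ${]i,j[}$ contains no element of $\decoration^+ \cap I$. The same bracketing argument with consecutive elements of $[n]\ssm I$ gives that all up shards of~$\shards_I$ lie in~$\shards_\decoration$ if and only if: for all $a<b<c$ with $a,c\notin I$, we have $b\notin \decoration^+\cap I$. Combining the two, $\shards_I \subseteq \shards_\decoration$ is equivalent to the conjunction of the two conditions in the statement, which by \cref{lem:raysQuotientFan} is equivalent to $\ray(I)$ being a ray of~$\Fan_\decoration$.

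I do not expect a genuine obstacle here: the only point requiring a little care is the passage between ``the membership condition holds for the consecutive-element shards generating $\shards_I$'' and ``it holds for every triple $a<b<c$'', which uses the trivial observation that whenever $a<b<c$ with $a,c\in I$ (resp.\ $a,c\notin I$), one may shrink $[a,c]$ to a pair of consecutive elements of~$I$ (resp.\ of $[n]\ssm I$) still bracketing~$b$, so the two phrasings are equivalent. One should also remember the degenerate cases — $|I|=1$ contributes no down shard, $|[n]\ssm I|=1$ no up shard — in which the corresponding condition is vacuous, consistent with $\shards_I$ then containing only $n-2$ shards of the other type. Everything else is a direct substitution into the two displayed set descriptions.
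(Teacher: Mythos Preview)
Your proposal is correct and follows essentially the same approach as the paper: reduce to \cref{lem:raysQuotientFan}, check separately when the down and up shards of~$\shards_I$ lie in~$\shards_\decoration$, and pass between the ``consecutive elements'' and ``arbitrary triple $a<b<c$'' formulations via the bracketing argument. The paper's proof is just a compressed version of yours, writing the bracketing explicitly as $i = \max([a,b[\,\cap I)$ and $j = \min(\,]b,c]\cap I)$ (and symmetrically for the complement).
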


\begin{example}
For the decorations of \cref{fig:permutreeLattices,fig:permutreeFan,fig:permutreehedron}, the rays of~$\Fan_{\noneCirc{}\noneCirc{}\downCirc{}\noneCirc{}}$ correspond to the subsets~$1, 2, 3, 4, 12, 13, 23, 34, 123, 134, 234$ while the rays of~$\Fan_{\noneCirc{}\upDownCirc{}\upCirc{}\noneCirc{}}$ correspond to the subsets~$1, 4, 12, 34, 123, 124, 234$.
\end{example}

\pagebreak
\begin{example}
Specializing \cref{prop:raysPermutreeFan}, we recover the following classical descriptions:
\begin{itemize}
\item when~$\decoration = \noneCirc^n$, the rays of the braid fan~$\Fan_{\noneCirc^n}$ are all proper subsets~$\varnothing \ne I \subsetneq [n]$,
\item when~$\decoration = \downCirc^n$, the rays of~$\Fan_{\downCirc^n}$ are all proper intervals~$[i,j]$ of~$[n]$, (equivalently, one can think of the interval~$[i,j]$ as corresponding to the internal diagonal $(i-1,j+1)$ of a polygon with vertices labeled~$0, \dots, n+1$),
\item when~$\decoration = \upDownCirc^n$, the rays of~$\Fan_{\noneCirc^n}$ are all proper initial intervals~$[1,i]$ or final intervals~$[i,n]$.
\end{itemize}
\end{example}

\begin{proof}[Proof of \cref{prop:raysPermutreeFan}]
We have~$a, c \in I$ and~$b \in \decoration^- \ssm I$ if and only if the shard~$\shard(i, j, \varnothing)$ is in~$\shards_I$ but not in~$\shards_\decoration$, where~$i \eqdef \max ({[a,b[} \cap I)$ and~$j \eqdef \min ({]b,c]} \cap I)$.
Similarly, $a, c \notin I$ and~${b \in \decoration^+ \cap I}$ if and only if the shard~$\shard(i, j, ]i,j[)$ is in~$\shards_I$ but not in~$\shards_\decoration$, where~$i \eqdef \max ({[a,b[} \ssm I)$ and~$j \eqdef \min ({]b,c]} \ssm I)$.
The statement thus follows from \cref{lem:raysQuotientFan}.

An alternative argument would be to observe directly that these conditions are necessary and sufficient to allow the construction of a $\decoration$-permutree with an edge whose cut is~$\edgecut{I}{[n] \ssm I}$.
\end{proof}

\begin{corollary}
\label{coro:numberRaysPermutreeFan}
The number~$\rho(\decoration)$ of rays of the $\decoration$-permutree fan~$\Fan_\decoration$ is
\[
\rho(\decoration) = n - 1 + \sum_{\substack{1 \le i < j \le n \\ \forall \, i < k < j, \, \decoration_k \ne \upDownCirc}} \!\!\!\!\! 2^{|\set{i < k < j}{\decoration_k = \noneCirc}|}.
\]
\end{corollary}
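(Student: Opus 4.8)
The plan is to count the proper nonempty subsets $I$ of $[n]$ satisfying the ray criterion of \cref{prop:raysPermutreeFan}, after first recasting that criterion in terms of the runs of the string $\one_I$ (read as a word in $0$ and $1$). Decompose $\one_I$ into its maximal constant blocks $b_1^{e_1} b_2^{e_2} \cdots b_r^{e_r}$ (with $b_\ell \ne b_{\ell+1}$) and call the blocks $b_2^{e_2}, \dots, b_{r-1}^{e_{r-1}}$ \emph{internal}. The first step is to prove that $\ray(I)$ is a ray of $\Fan_\decoration$ if and only if every internal $0$-block of $\one_I$ avoids $\decoration^-$ and every internal $1$-block avoids $\decoration^+$. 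Both directions are immediate from \cref{prop:raysPermutreeFan}: a position $b$ of $\decoration^-$ lying in an internal $0$-block sits strictly between an element of $I$ in the preceding $1$-block and an element of $I$ in the following $1$-block while not being in $I$, which is forbidden; conversely, if some $a < b < c$ satisfy $a, c \in I$, $b \in \decoration^-$ and $b \notin I$, then $b$ lies in a $0$-block with a $1$ on each side, hence in an internal one. The statement for $\decoration^+$ and $1$-blocks is symmetric.

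Next I split the valid subsets into three families and exhibit a bijection with the indexing set of $\rho(\decoration)$. If $\one_I$ has at most two blocks it equals $1^i 0^{n-i}$ or $0^i 1^{n-i}$ for some $1 \le i \le n-1$; having no internal block, every such $I$ is valid. I send $0^i 1^{n-i}$ to the integer $i \in [n-1]$ (this accounts for the summand $n-1$) and $1^i 0^{n-i}$ to the pair $(i, i+1)$, whose term $2^{|\set{i<k<i+1}{\decoration_k = \noneCirc}|}$ equals $1$. Now suppose $\one_I$ has $r \ge 3$ blocks; let $i$ be the length of the first block and $j = n+1$ minus the length of the last block, so that $1 \le i$, $j \le n$, $j \ge i+2$, the first and last blocks are $[1,i]$ and $[j,n]$, and the window $\{i+1, \dots, j-1\}$ is exactly the union of the internal blocks. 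To such $I$ I associate the pair $(i,j)$ together with $T \eqdef \set{k}{i < k < j, \; \decoration_k = \noneCirc, \; k \in I}$.

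The key point is that, conversely, the datum $(i,j,T)$ recovers $I$ uniquely and every admissible datum arises this way. On the window the reformulated criterion forces each position of $\decoration^-$ into $I$, each position of $\decoration^+$ out of $I$, forbids any position of $\decoration^- \cap \decoration^+$ — whence the condition $\decoration_k \ne \upDownCirc$ for $i<k<j$ — and leaves exactly the $\noneCirc$-positions free, recorded by $T$; outside the window $\one_I$ is constant on $[1,i]$ and on $[j,n]$, and these two constants are determined because they must disagree with the adjacent window values. In the other direction, any $1 \le i < j \le n$ with $j \ge i+2$ and no $\upDownCirc$ in $(i,j)$, together with any $T \subseteq \set{k}{i<k<j, \decoration_k = \noneCirc}$, produces through these rules a valid subset whose string genuinely has at least three blocks, with first block $[1,i]$ and last block $[j,n]$. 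Summing the contributions over the three families — $n-1$ from the integers, and $2^{|\set{i<k<j}{\decoration_k = \noneCirc}|}$ over all admissible pairs $(i,j)$, where the pairs with $j=i+1$ produce the two-block strings $1^i 0^{n-i}$ and the pairs with $j \ge i+2$ the strings with at least three blocks — yields precisely $\rho(\decoration)$.

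The main obstacle I expect is the careful bookkeeping of the boundary cases in the last step: one must check that the string built from $(i,j,T)$ with $j \ge i+2$ really has three or more blocks, so that the three families are disjoint and exhaust all valid $I$, and that the constants on $[1,i]$ and $[j,n]$ are read off unambiguously — the subtle point being that when the whole window carries a single value $v$ one has $v = 1-u$ for $u$ the value on $[1,i]$, forcing the value on $[j,n]$ to be $u$ again, so the string stays a bona fide three-block string instead of collapsing to two blocks. Granting the reformulated ray criterion, the remaining verifications are routine.
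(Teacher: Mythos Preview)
Your proof is correct and follows essentially the same approach as the paper: both parametrize a valid~$I$ by the pair~$(i,j)$ given by the end of the first run and the start of the last run of~$\one_I$, observe that inside the window~$]i,j[$ the positions in~$\decoration^-$ are forced into~$I$, those in~$\decoration^+$ are forced out, those in~$\upDownCirc$ are forbidden, and the~$\noneCirc$ positions are free, and then note that the two boundary runs are determined by the window values except when~$j=i+1$, where the two choices~$[1,i]$ and~$[i+1,n]$ account for the extra~$n-1$. Your explicit reformulation in terms of internal blocks and your separate treatment of the two-block strings make the bookkeeping a bit more transparent than the paper's terse version, but the argument is the same.
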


\begin{example}
For the decorations of \cref{fig:permutreeLattices,fig:permutreeFan,fig:permutreehedron}, $\rho(\noneCirc{}\noneCirc{}\downCirc{}\noneCirc{}) = 11$ and~$\rho(\noneCirc{}\upDownCirc{}\upCirc{}\noneCirc{}) = 7$.
\end{example}

\begin{example}
Specializing the formula of \cref{coro:numberRaysPermutreeFan}, we recover the following classical numbers:
\begin{itemize}
\item when~$\decoration = \noneCirc^n$, the braid fan~$\Fan_{\noneCirc^n}$ has~$2^n-2$ rays,
\item when~$\decoration = \downCirc^n$, the fan~$\Fan_{\downCirc^n}$ has $\binom{n+1}{2}-1$ rays (equalling the number of internal diagonals of the $(n+2)$-gon),
\item when~$\decoration = \upDownCirc^n$, the fan~$\Fan_{\upDownCirc^n}$ has~$2n-2$ rays.
\end{itemize}
\end{example}

\begin{proof}[Proof of \cref{coro:numberRaysPermutreeFan}]
To choose a ray~$\ray(I)$ of~$\Fan_\decoration$, we proceed as follows:
\begin{itemize}
\item We choose the last position~$i$ (resp.~first position~$j$) such that~$1, \dots, i$ (resp.~$j, \dots, n$) all belong to~$I$ or all belong to~$[n] \ssm I$. Note that~$1 \le i < j \le n$.
\item For any~$i < k < j$, since~$|\{i, i+1\} \cap I| = 1$ and~$|\{j-1, j\} \cap I| = 1$, the characterization of \cref{prop:raysPermutreeFan} imposes that~$k \in I$ if~$k \in \decoration^-$, and~$k \notin I$ if~$k \in \decoration^+$. This is impossible if~$\decoration_k = \upDownCirc$ (explaining the condition over the sum), and leaves two choices if~$\decoration_k = \noneCirc$ (explaining the power of~$2$).
\item If~$i+1 < j$, then the presence of~$i+1$ (resp.~$j-1$) in~$I$ requires the absence of~$i$ (resp.~$j$) in~$I$ and \viceversa, so there is no choice left.
\item If~$i+1 = j$, we have counted only one ray while both subsets~$[1,i]$ and~$[j,n]$ indeed correspond to rays of~$\Fan_\decoration$.
\qedhere
\end{itemize}
\end{proof}

\begin{corollary}
\label{coro:numberRaysPermutreeFanNoI}
If~$\decoration \in \{\downCirc, \upCirc, \upDownCirc\}^n$, we have~$\rho(\decoration) =  n - 1 + |\set{1 \le i < j \le n}{\forall \, i < k < j, \, \decoration_k \ne \upDownCirc}|$.
\end{corollary}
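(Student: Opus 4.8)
The plan is to deduce this immediately from the general formula in \cref{coro:numberRaysPermutreeFan}. The only feature of the decoration that the exponent $|\set{i < k < j}{\decoration_k = \noneCirc}|$ records is the presence of $\noneCirc$ symbols strictly between positions $i$ and $j$, so the first step is simply to observe that when $\decoration \in \{\downCirc, \upCirc, \upDownCirc\}^n$ there are no such positions at all: the set $\set{1 \le k \le n}{\decoration_k = \noneCirc}$ is empty.

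Consequently, for every pair $1 \le i < j \le n$ we have $|\set{i < k < j}{\decoration_k = \noneCirc}| = 0$, and hence the summand $2^{|\set{i < k < j}{\decoration_k = \noneCirc}|}$ equals $2^0 = 1$. The second step is then to substitute this into the formula of \cref{coro:numberRaysPermutreeFan}: the sum
\[
\sum_{\substack{1 \le i < j \le n \\ \forall \, i < k < j, \, \decoration_k \ne \upDownCirc}} \!\!\!\! 2^{|\set{i < k < j}{\decoration_k = \noneCirc}|}
\]
collapses to a sum of $1$'s over exactly the pairs $(i,j)$ with $1 \le i < j \le n$ such that $\decoration_k \ne \upDownCirc$ for all $i < k < j$, which is by definition $|\set{1 \le i < j \le n}{\forall \, i < k < j, \, \decoration_k \ne \upDownCirc}|$. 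Adding back the $n-1$ term yields the claimed identity.

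There is essentially no obstacle here: the statement is a direct corollary, and the whole argument is the single observation that $\noneCirc$ does not occur in the decoration, which forces every exponent to vanish. The only thing worth a sentence is noting that the restriction on the index set of the sum (the condition $\decoration_k \ne \upDownCirc$ for all $i<k<j$) is unchanged by the specialization, so it passes verbatim into the cardinality on the right-hand side.
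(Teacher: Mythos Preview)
Your proof is correct and is exactly the argument the paper intends: the corollary is stated without proof in the paper, as an immediate specialization of \cref{coro:numberRaysPermutreeFan} obtained by noting that the absence of~$\noneCirc$ forces every exponent to vanish.
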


%%%%%%%%%%%

\subsection{Exchangeable rays of permutree fans}
\label{subsec:exchangeablePairsPermutreeFan}

An immediate corollary of \cref{prop:wallCrossingInequalitiesPermutreeFan} is that the linear dependence between the rays of two adjacent chambers~$C \eqdef \R_{\ge0}\rays$ and~$D \eqdef \R_{\ge0}\rays[S]$ of~$\Fan_\decoration$ with~$\rays \ssm \rays[S] = \{\ray\}$ and~$\rays[S] \ssm \rays = \{\ray[s]\}$ only depend on the rays~$\ray$ and~$\ray[s]$, not on the chambers~$C$ and~$D$.
This property is called \defn{unique exchange relation property} in~\cite{PadrolPaluPilaudPlamondon} and allows to describe the type cone by inequalities associated with exchangeable rays rather than with walls.

We therefore proceed in identifying the pairs of exchangeable rays of~$\Fan_\decoration$.
We consider two subsets~$I, J \in \cI_\decoration$, \ie proper subsets~$\varnothing \ne I, J \subsetneq [n]$ such that~$\ray(I)$ and~$\ray(J)$ are rays of the $\decoration$-permutree fan~$\Fan_\decoration$, as characterized in \cref{prop:raysPermutreeFan}.

\begin{proposition}
\label{prop:exchangeablePairsPermutreeFan}
The rays~$\ray(I)$ and~$\ray(J)$ are exchangeable in the $\decoration$-permutree fan~$\Fan_\decoration$ if and only if, up to swapping the roles of~$I$ and~$J$,
\begin{enumerate}[(i)]
\item $i \eqdef \max(I \ssm J) < \min(J \ssm I) \defeq j$,
\item $I \ssm J = \{i\}$ or $\decoration_i \ne \noneCirc$ \quad and \quad $J \ssm I = \{j\}$ or $\decoration_j \ne \noneCirc$,
\item ${]i,j[} \cap \decoration^- \subseteq I \cap J$ \quad and \quad ${]i,j[} \cap \decoration^+ \cap I \cap J = \varnothing$.
\end{enumerate}
\end{proposition}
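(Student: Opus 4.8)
The plan is to identify the exchangeable pairs of rays of~$\Fan_\decoration$ with the rotations of $\decoration$-permutrees, using \cref{prop:wallCrossingInequalitiesPermutreeFan} together with the unique exchange relation property recorded just above the statement. Concretely, if $\ray(I)$ and $\ray(J)$ are exchangeable, then there are adjacent chambers $C(T) = \R_{\ge0}\rays$ and $C(S) = \R_{\ge0}\rays[S]$ of~$\Fan_\decoration$ with $\rays \ssm \rays[S] = \{\ray(I)\}$ and $\rays[S] \ssm \rays = \{\ray(J)\}$, and the proof of \cref{prop:wallCrossingInequalitiesPermutreeFan} exhibits a rotation of an edge $i \to j$ of~$T$ (with $i < j$, up to swapping $I$ and $J$) with $I = \{i\} \cup D \cup \underline{L} \cup \overline{L}$, $J = \{j\} \cup D \cup \underline{R} \cup \overline{R}$, $D = I \cap J$, and $U = [n] \ssm (I \cup J)$, where $\underline{L}, \overline{L}$ are the left descendant and left ancestor subtrees of~$i$, where $\underline{R}, \overline{R}$ are the right descendant and right ancestor subtrees of~$j$, and where~$U$ is an ancestor subtree of~$j$. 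I would prove the two implications separately along these lines.

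For the ``only if'' direction, I would read off conditions (i)--(iii) from this decomposition. Since all nodes of $\underline{L} \cup \overline{L}$ are smaller than~$i$ and all nodes of $\underline{R} \cup \overline{R}$ are larger than~$j$, we get $I \ssm J = \{i\} \cup \underline{L} \cup \overline{L}$ and $J \ssm I = \{j\} \cup \underline{R} \cup \overline{R}$, hence $\max(I \ssm J) = i < j = \min(J \ssm I)$, which is~(i); moreover a nonempty $\underline{L} \cup \overline{L}$ (resp.\ $\underline{R} \cup \overline{R}$) forces~$i$ (resp.~$j$) to have two children or two parents, that is $\decoration_i \ne \noneCirc$ (resp.~$\decoration_j \ne \noneCirc$), which is~(ii). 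For~(iii), a node $k \in {]i,j[}$ lies neither in $\underline{L} \cup \overline{L}$ (all~$< i$) nor in $\underline{R} \cup \overline{R}$ (all~$> j$), so $k \in D$ or $k \in U$. If $\decoration_k \in \{\downCirc, \upDownCirc\}$ and $k \in U$, then~$j$, being a descendant of~$k$, lies in a descendant subtree of~$k$, necessarily the right one since $j > k$; but that subtree also contains the child~$i$ of~$j$, contradicting $i < k$, so $k \in D = I \cap J$. Symmetrically, if $\decoration_k \in \{\upCirc, \upDownCirc\}$ and $k \in D$, then both~$i$ and~$j$ are ancestors of~$k$ and the path from~$k$ to~$j$ runs through~$i$, so~$i$ and~$j$ lie in the same ancestor subtree of~$k$, necessarily the right one since $j > k$, contradicting $i < k$; hence $k \notin I \cap J$. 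Together these give~(iii).

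For the ``if'' direction, I would construct an explicit witness. Given $I, J \in \cI_\decoration$ satisfying (i)--(iii), set $i \eqdef \max(I \ssm J)$ and $j \eqdef \min(J \ssm I)$ and build a $\decoration$-permutree~$T$ with an edge $i \to j$ by attaching $D \eqdef I \cap J$ to~$i$ as its descendant subtree (the right one when $i \in \decoration^-$), attaching the elements of $(I \ssm J) \ssm \{i\}$ (all~$< i$ by~(i), and present only when $\decoration_i \ne \noneCirc$ by~(ii)) to~$i$ as its left descendant and/or left ancestor subtree according to~$\decoration_i$, attaching $(J \ssm I) \ssm \{j\}$ to~$j$ symmetrically, attaching $U \eqdef [n] \ssm (I \cup J)$ to~$j$ as its ancestor subtree (the left one when $j \in \decoration^+$), and filling each of these node sets with an arbitrary $\decoration$-permutree on it. The work lies in checking that~$T$ is a valid $\decoration$-permutree: that all of~$D$ is~$> i$ when $i \in \decoration^-$ and all of~$U$ is~$< j$ when $j \in \decoration^+$ (both follow from \cref{prop:raysPermutreeFan} applied to~$J$, using that $i \notin J$, that $j \in J$, and that $i < j$), and that~(iii) makes the type of every node of~$]i,j[$ compatible with being placed in~$D$ or in~$U$. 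Once~$T$ is built, the edge $i \to j$ has edge cut $\edgecut{I}{[n] \ssm I}$, and by \cref{prop:rotationPermutree} its rotation produces a $\decoration$-permutree~$S$ whose unique changed edge cut is, by the computation in the proof of \cref{prop:wallCrossingInequalitiesPermutreeFan}, precisely $\edgecut{J}{[n] \ssm J}$; so the chambers $C(T)$ and $C(S)$ of~$\Fan_\decoration$ witness that $\ray(I)$ and $\ray(J)$ are exchangeable.

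The main obstacle I anticipate is the ``if'' direction, specifically the verification that the tree~$T$ above is genuinely a $\decoration$-permutree. Conditions~(i) and~(ii) are exactly what legitimizes the attachments at the nodes~$i$ and~$j$, while condition~(iii) is exactly what prevents the nodes of~$]i,j[$ of type $\downCirc$, $\upCirc$ or~$\upDownCirc$ from violating a permutree axiom once absorbed into~$D$ or~$U$; the remaining nodes, lying outside~$]i,j[$, are handled by the ray characterization \cref{prop:raysPermutreeFan}. By contrast, the ``only if'' direction is routine bookkeeping once the rotation picture of \cref{prop:wallCrossingInequalitiesPermutreeFan} is in hand, and the unique exchange relation property means there is no need to track which pair of adjacent chambers realizes a given exchange.
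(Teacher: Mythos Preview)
Your proposal is correct and follows essentially the same approach as the paper: the ``only if'' direction reads off (i)--(iii) from the rotation picture established in the proof of \cref{prop:wallCrossingInequalitiesPermutreeFan}, and the ``if'' direction builds an explicit $\decoration$-permutree~$T$ by grafting arbitrary permutrees on $D = I \cap J$, $U = [n] \ssm (I \cup J)$, $L = (I \ssm J) \ssm \{i\}$, $R = (J \ssm I) \ssm \{j\}$ around an edge~$i \to j$, then rotates. The paper's verification that~$T$ is a $\decoration$-permutree is slightly more systematic than your sketch --- it checks that no edge of each block $L \cup \{i\}$, $D \cup \{i\}$, $U \cup \{j\}$, $R \cup \{j\}$ crosses a red wall emanating from a node of any other block, using \cref{prop:raysPermutreeFan} for both~$I$ and~$J$ --- whereas you single out only two of these checks; but you correctly identify the ingredients and the obstacle, and your arguments for~(iii) in the ``only if'' direction are in fact more explicit than the paper's.
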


\begin{example}
For the decorations of \cref{fig:permutreeLattices,fig:permutreeFan,fig:permutreehedron}, the pairs of exchangeable rays of~$\Fan_{\noneCirc{}\noneCirc{}\downCirc{}\noneCirc{}}$ correspond to the pairs of subsets~$\{1, 2\}$, $\{1, 3\}$, $\{1, 34\}$, $\{12, 13\}$, $\{12, 134\}$, $\{12, 23\}$, $\{12, 234\}$, $\{123, 134\}$, $\{123, 234\}$, $\{123, 4\}$, $\{13, 23\}$, $\{13, 34\}$, $\{13, 4\}$, $\{134, 234\}$, $\{2, 3\}$, $\{2, 34\}$, $\{23, 34\}$, $\{23, 4\}$, $\{3, 4\}$, while the pairs of exchangeable rays of~$\Fan_{\noneCirc{}\upDownCirc{}\upCirc{}\noneCirc{}}$ correspond to the pairs of subsets~$\{1, 234\}$, $\{12, 34\}$, $\{12, 4\}$, $\{123, 124\}$, $\{123, 4\}$, $\{124, 34\}$.
\end{example}

\begin{example}
\label{exm:exchangeablePairsPermutreeFan}
Specializing \cref{prop:exchangeablePairsPermutreeFan}, we recover that the pairs of exchangeable rays in~$\Fan_\decoration$ correspond to the pairs of proper subsets $\{I, J\}$ where
\begin{itemize}
\item when~$\decoration = \noneCirc^n$, we have~$I = K \cup \{i\}$ and~$J = K \cup \{j\}$ for~$1 \le i < j \le n$ and~$K \subseteq [n] \ssm \{i,j\}$,
\item when~$\decoration = \downCirc^n$, we have~$I = {[h,j[}$ and~$J = {]i,k]}$ for some~$1 \le h \le i < j \le k \le n$, (equivalently, the internal diagonals $(h-1,j)$ and~$(i,k+1)$ of the $(n+2)$-gon intersect),
\item when~$\decoration = \upDownCirc^n$, we have~$I = {[1,i]}$ and~$J = {]i,n]}$ for some~$1 \le i < n$.
\end{itemize}
\end{example}

\begin{proof}[Proof of \cref{prop:exchangeablePairsPermutreeFan}]
We first prove that the conditions of \cref{prop:exchangeablePairsPermutreeFan} are necessary for the rays~$\ray(I)$ and~$\ray(J)$ to be exchangeable in the $\decoration$-permutree fan~$\Fan_\decoration$.
We keep the notations of the proof of \cref{prop:wallCrossingInequalitiesPermutreeFan}.
Remember that we had~$I = \{i\} \cup D \cup \underline{L} \cup \overline{L}$ and~$J = \{j\} \cup D \cup \underline{R} \cup \overline{R}$.
Since~$\underline{L} \cup \overline{L} < i < j < \underline{R} \cup \overline{R}$, we obtain that~$i = \max(I \ssm J)$ and~$j = \min(J \ssm I)$ indeed satisfy~(i).
Moreover, $I \ssm J = \{i\} \cup \underline{L} \cup \overline{L}$ is restricted to~$\{i\}$ if~$\decoration_i = \noneCirc$ (because the subtrees~$\underline{L}$ and~$\overline{L}$ must then be empty), and similarly $J \ssm I = \{j\} \cup \underline{R} \cup \overline{R}$ is restricted to~$\{j\}$ if~$\decoration_j = \noneCirc$, wich shows~(ii).
Finally, if there is~$i < k < j$ such that~$k \in \decoration^- \ssm (I \cap J)$ (resp.~$k \in \decoration^+ \cap (I \cap J)$), then the edge~$i~\textbf{--}~j$ crosses the red wall below~$k$ (resp.~above~$k$), which shows~(iii).

Assume now that~$I$ and~$J$ satisfy the conditions of \cref{prop:raysPermutreeFan,prop:exchangeablePairsPermutreeFan}.
We construct two $\decoration$-permutrees~$T$ and~$S$, connected by the rotation of the edge~$i~\textbf{--}~j$ whose edge cut in~$T$ is~$I$ and in~$S$ is~$J$.
For this, we first pick an arbitrary permutree, that we denote by~$D$ (resp.~$U$, resp.~$L$, resp.~$R$), for the restriction of the decoration~$\decoration$ to the subset~$I \cap J$ (resp.~$[n] \ssm (I \cup J)$, resp.~$I \ssm J \ssm \{i\}$, resp.~$J \ssm I \ssm \{j\}$).
We then construct an oriented tree~$T$ on~$[n]$ starting with an edge~$i \to j$ and placing
\begin{itemize}
\item $D$ as the only (resp.~the right) descendant subtree of~$i$ if~$i \notin \decoration^-$ (resp.~if~$i \in \decoration^-$),
\item $U$ as the only (resp.~the left) ancestor subtree of~$j$ if~$j \notin \decoration^+$ (resp.~if~$j \in \decoration^+$),
\item $L$ as the left descendant (resp.~ancestor) subtree of~$i$ if~$i \in \decoration^-$ (resp.~if~$i \notin \decoration^-$),
\item $R$ as the right descendant (resp.~ancestor) subtree of~$j$ if~$j \in \decoration^-$ (resp.~if~$j \notin \decoration^-$).
\end{itemize}
Note that there is only one way to place these subtrees.
For instance, to place~$D$, we connect the leftmost upper blossom of~$D$ to the only (resp.~the right) lower blossom of~$i$ if~$i \notin \decoration^-$ (resp.~if~$i \in \decoration^-$).

We claim that the conditions of \cref{prop:raysPermutreeFan,prop:exchangeablePairsPermutreeFan} ensure that~$T$ is a $\decoration$-permutree.
Observe first that it indeed forms a tree since the permutree~$L$ (resp.~$R$) is empty if~$\decoration_i = \noneCirc$ (resp.~$\decoration_j = \noneCirc$) by \cref{prop:exchangeablePairsPermutreeFan}\,(ii).
Hence, we just need to show that no edge of~$T$ crosses a red wall below a node~$i \in \decoration^-$ or above a node~$i \in \decoration^+$.
Since all nodes of~$L$ (resp.~$R$) are smaller than~$i$ (resp.~$j$) by \cref{prop:exchangeablePairsPermutreeFan}\,(i), and there is no red wall above (resp.~below) the nodes of~$D$ (resp.~$U$) between~$i$ and~$j$ by \cref{prop:exchangeablePairsPermutreeFan}\,(iii), the edge~$i \to j$ crosses no red wall.
Consider now an edge~${\ell\;\textbf{--}\;\ell'}$ of~$L \cup \{i\}$ with~$\ell < \ell'$.
It cannot cross a red wall emanating from a node~$r$ of~$R \cup \{j\}$ since~$\ell' \le i < j \le r$, nor from a node~$u$ of~$U$ since otherwise we would have~$\ell < u < \ell'$ with~$\ell, \ell' \in I$ and~$u \in \decoration^- \ssm I$ contradicting \cref{prop:raysPermutreeFan}, nor from a node~$d$ of~$D$ since otherwise we would have~$\ell < d < \ell'$ with~$\ell, \ell' \notin J$ and~$d \in \decoration^+ \cap J$ contradicting \cref{prop:raysPermutreeFan}.
We prove similarly that no edge in~$D \cup \{i\}$, nor in $U \cup \{j\}$, nor in~$R \cup \{j\}$ crosses a red wall.
This closes the proof that~$T$ is a $\decoration$-permutree.

Finally, denote by~$S$ the $\decoration$-permutree obtained by the rotation of the edge~$i \to j$ in~$T$.
Observe that the construction is done so that the edge~$i \to j$ in~$T$ has cut~$\edgecut{I}{[n] \ssm I}$ while the edge~$j \to i$ in~$S$ has cut~$\edgecut{J}{[n] \ssm J}$.
It follows that the rays~$\ray(I)$ and~$\ray(J)$ are exchangeable in the adjacent chambers~$C(T)$ and~$C(S)$ of the $\decoration$-permutree fan~$\Fan_\decoration$.
\end{proof}

\pagebreak
\begin{corollary}
\label{coro:numberExchangeablePairsPermutreeFan}
The number~$\chi(\decoration)$ of pairs of exchangeable rays in the $\decoration$-permutree fan~$\Fan_\decoration$~is
\[
\chi(\decoration) = \sum_{\substack{1 \le i < j \le n \\ \forall \, i < k < j, \, \decoration_k \ne \upDownCirc}} \!\!\!\!\! \Omega(\decoration_1 \dots \decoration_{i-1})^{\decoration_i \ne \upDownCirc} \cdot 2^{|\set{i < k < j}{\decoration_k = \noneCirc}|} \cdot \Omega(\decoration_n \dots \decoration_{j+1})^{\decoration_j \ne \upDownCirc},
\]
where~$\Omega(\decoration_1 \dots \decoration_k)$ is defined inductively from~$\Omega(\varepsilon) = 1$ by
\[
\Omega(\decoration_1 \dots \decoration_k) = \begin{cases} 2 \cdot \Omega(\decoration_1 \dots \decoration_{k-1}) & \text{if } \decoration_k = \noneCirc, \\ 1 + \Omega(\decoration_1 \dots \decoration_{k-1}) & \text{if } \decoration_k \in \{\downCirc, \upCirc\}, \\ 2 & \text{if } \decoration_k = \upDownCirc. \end{cases}
\] 
\end{corollary}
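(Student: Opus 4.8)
The plan is to count exchangeable pairs directly from their combinatorial description in \cref{prop:exchangeablePairsPermutreeFan}, in the spirit of the proof of \cref{coro:numberRaysPermutreeFan}. Given a pair $\{\ray(I),\ray(J)\}$ of exchangeable rays of $\Fan_\decoration$, orient it so that $i \eqdef \max(I \ssm J) < \min(J \ssm I) \eqdef j$ as in condition~(i); then $I$ is proper and nonempty (it contains $i$ and not $j$), and likewise $J$, so $\{I,J\}$ ranges exactly over the unordered pairs from $\cI_\decoration$ satisfying conditions~(i)--(iii). First I would record that condition~(iii) forces $\decoration_k \ne \upDownCirc$ for every $i<k<j$, which accounts for the restriction on the summation index. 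For a fixed admissible pair $i<j$, I would then split the data describing $\{I,J\}$ into a \emph{middle} piece (the status of each $k \in {]i,j[}$, necessarily $k \in I\cap J$ or $k \notin I\cup J$), a \emph{left} piece (the statuses of $k<i$ together with that of $i$), and a \emph{right} piece (the statuses of $k>j$ together with that of $j$). The heart of the argument is that these three pieces may be chosen independently, so that the number of pairs factors as the product of the three separate counts.

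\textbf{The three factors.} For the middle piece this is immediate: by condition~(iii) a position $k \in {]i,j[}$ with $\decoration_k = \downCirc$ must lie in $I\cap J$, one with $\decoration_k = \upCirc$ must lie outside $I\cup J$, and one with $\decoration_k = \noneCirc$ is free, giving $2^{\,|\set{i<k<j}{\decoration_k=\noneCirc}|}$ choices. For the left piece, I would first note that when $\decoration_i = \upDownCirc$ every $k<i$ is forced into $I\ssm J$: having $k \in I\cap J$ would make $J$ skip the $\decoration^-$-position $i$, and having $k \notin I\cup J$ would make the complement of $I$ skip the $\decoration^+$-position $i$, both forbidden by \cref{prop:raysPermutreeFan}; so there is a single left piece, matching the exponent $\decoration_i \ne \upDownCirc$. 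When $\decoration_i \ne \upDownCirc$, condition~(ii) and \cref{prop:raysPermutreeFan} cut the three possible statuses of a position $k<i$ down to exactly two, and a short case analysis (the three cases $\decoration_i \in \{\noneCirc,\downCirc,\upCirc\}$ being interchanged by relabelling the two statuses) shows that the valid left pieces are precisely the $2$-colourings of the word $\decoration_1\dots\decoration_{i-1}$ by colours $\{X,Y\}$ such that no $\decoration^-$-position coloured $X$ has a $Y$-position to its left and no $\decoration^+$-position coloured $Y$ has an $X$-position to its left. Here one uses that, since $i \in I$ and $j \in J$, these endpoints automatically supply the "far" vertex needed to witness a violation of the skip-free conditions of \cref{prop:raysPermutreeFan}; this is also exactly what makes the split a genuine product, because every potential violation has its middle vertex either strictly left of $i$, strictly right of $j$, or equal to $i$ or $j$ (a position of ${]i,j[}$ can never be skipped, as $\decoration^-$-positions there lie in $I\cap J$ and $\decoration^+$-positions outside $I\cup J$).

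\textbf{The recursion and assembly.} Next I would identify the colour-count above with $\Omega(\decoration_1\dots\decoration_{i-1})$ by verifying the defining recursion, peeling off the last letter $\decoration_{i-1}$ (the one adjacent to the pivot): a $\noneCirc$ may be coloured either way, contributing a factor $2$; a $\downCirc$ or $\upCirc$ may always take its "own" colour and may also take the other colour provided the remaining colouring is constant, and since constant colourings are always valid this contributes $+1$; and a $\upDownCirc$ can be coloured consistently only if the remaining colouring is constant, leaving exactly the two constant colourings, hence the value $2$. Thus the left count is $\Omega(\decoration_1\dots\decoration_{i-1})^{\decoration_i \ne \upDownCirc}$, and the order-reversal $k \mapsto n+1-k$, which carries $\Fan_\decoration$ to the permutree fan of the reversed decoration and turns the right piece at pivot $j$ into a left piece at pivot $n+1-j$, gives the right count $\Omega(\decoration_n\dots\decoration_{j+1})^{\decoration_j \ne \upDownCirc}$. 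Summing the product of the three factors over all $i<j$ with $\decoration_k \ne \upDownCirc$ for $i<k<j$ yields the claimed formula; as a consistency check the exponents of $2$ here are exactly those of \cref{coro:numberRaysPermutreeFan}, and the $\decoration \in \{\downCirc,\upCirc,\upDownCirc\}^n$ case recovers the simpler count.

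\textbf{Main obstacle.} The delicate part is the second paragraph: making precise, and checking in all cases, that the ray conditions of \cref{prop:raysPermutreeFan} together with conditions~(ii)--(iii) of \cref{prop:exchangeablePairsPermutreeFan} localise to independent constraints on the left, middle and right pieces, and that the three choices of $\decoration_i$ genuinely give the same left count. Everything after that — the identification with $\Omega$ and the final summation — is routine.
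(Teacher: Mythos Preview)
Your proposal is correct and follows essentially the same approach as the paper: fix the pivots $i<j$, observe that condition~(iii) forces $\decoration_k\ne\upDownCirc$ for $i<k<j$ and leaves a binary choice at each $\noneCirc$ there, and then count the possibilities for positions $k<i$ (respectively $k>j$) according to the value of $\decoration_i$ (respectively $\decoration_j$). The paper's proof is terser---it simply lists, for each value of $\decoration_i$, the two (or one, if $\decoration_i=\upDownCirc$) options available to each $k<i$ and declares these ``handled by the function~$\Omega$''---whereas you spell out the identification with $\Omega$ by abstracting to a uniform $2$-colouring and checking the recursion; this is a useful elaboration of a step the paper leaves implicit, but not a different argument.
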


\begin{example}
For the decorations of \cref{fig:permutreeLattices,fig:permutreeFan,fig:permutreehedron}, $\chi(\noneCirc{}\noneCirc{}\downCirc{}\noneCirc{}) = 19$ and~$\chi(\noneCirc{}\upDownCirc{}\upCirc{}\noneCirc{}) = 6$.
\end{example}

\begin{example}
Specializing the formula of \cref{coro:numberExchangeablePairsPermutreeFan}, we recover the following classical numbers:
\begin{itemize}
\item when~$\decoration = \noneCirc^n$, the braid fan~$\Fan_{\noneCirc^n}$ has~$2^{n-2} \binom{n}{2}$ pairs of exchangeable rays,
\item when~$\decoration = \downCirc^n$, the fan~$\Fan_{\downCirc^n}$ has~$\binom{n+2}{4}$ pairs of exchangeable rays (equalling the number of quadruples of vertices of the $(n+2)$-gon),
\item when~$\decoration = \upDownCirc^n$, the fan~$\Fan_{\upDownCirc^n}$ has~$n-1$ pairs of exchangeable rays.
\end{itemize}
\end{example}

\begin{proof}[Proof of \cref{coro:numberExchangeablePairsPermutreeFan}]
A pair of exchangeable rays in the $\decoration$-permutree fan~$\Fan_\decoration$ is a pair of proper subsets ${\varnothing \ne I, J \subsetneq [n]}$ satisfying the conditions of \cref{prop:raysPermutreeFan,prop:exchangeablePairsPermutreeFan}.
We choose such a pair of subsets as~follows:
\begin{itemize}
\item We first choose~$1 \le i < j \le n$ and will have~$i = \max(I \ssm J)$ and~$j = \min(J \ssm I)$ (to fulfill \cref{prop:exchangeablePairsPermutreeFan}\,(i)).
\item For all~$i < k < j$, we must have~$k \in I \cap J$ if $k \in \decoration^-$ and~$k \in [n] \ssm (I \cup J)$ if~$k \in \decoration^+$ (to fulfill \cref{prop:exchangeablePairsPermutreeFan}\,(iii)). This is impossible if~$\decoration_k = \upDownCirc$ (explaining the condition over the sum), and leaves two choices if~$\decoration_k = \noneCirc$ (explaining the power of~$2$).
\item For all~$k < i$, we must have~$k \in I$ if~$\decoration_i \in \decoration^+$, and~$k \notin J$ if~$\decoration_i \in \decoration^-$ (to fulfill \cref{prop:raysPermutreeFan}). Moreover, $k \in I \ssm J$ implies~$\decoration_i \ne \noneCirc$ (to fulfill \cref{prop:exchangeablePairsPermutreeFan}\,(ii)). Thus, $k$ must lie in
	\begin{itemize}
	\item $I \ssm J$ if~$\decoration_i = \upDownCirc$,
	\item $I \cap J$ or~$I \ssm J$ if~$\decoration_i = \upCirc$,
	\item $[n] \ssm (I \cup J)$  or~$I \ssm J$ if~$\decoration_i = \downCirc$,
	\item $I \cap J$ or~$[n] \ssm (I \cap J)$ if~$\decoration_i = \noneCirc$.
	\end{itemize}
Moreover, the choices in the last three cases are handled by the function~$\Omega$.
\item For~$j < k$, the argument is symmetric.
\qedhere
\end{itemize}
\end{proof}

%%%%%%%%%%%

\subsection{Facets of types cones of permutree fans}
\label{subsec:extremalExchangeablePairsPermutreeFan}

In view of the unique exchange property of the $\decoration$-permutree fan~$\Fan_\decoration$, each pair of exchangeable rays of~$\Fan_\decoration$ yields a wall-crossing inequality for the type cone~$\ctypeCone(\Fan_\decoration)$.
However, not all pairs of exchangeable rays yield facet-defining inequalities of~$\ctypeCone(\Fan_\decoration)$.
The characterization of the facets of~$\ctypeCone(\Fan_\decoration)$ is very similar to that of the exchangeable rays, only point (ii) slightly differs.

\begin{proposition}
\label{prop:extremalExchangeablePairsPermutreeFan}
The rays~$\ray(I)$ and~$\ray(J)$ define a facet of the type cone~$\ctypeCone(\Fan_\decoration)$ if and only if, up to swapping the roles of~$I$ and~$J$,
\begin{enumerate}[(i)]
\item $i \eqdef \max(I \ssm J) < \min(J \ssm I) \defeq j$,
\item $I \ssm J = \{i\}$ or $\decoration_i = \upDownCirc$ \quad and \quad $J \ssm I = \{j\}$ or $\decoration_j = \upDownCirc$,
\item ${]i,j[} \cap \decoration^- \subseteq I \cap J$ \quad and \quad ${]i,j[} \cap \decoration^+ \cap I \cap J = \varnothing$.
\end{enumerate}
\end{proposition}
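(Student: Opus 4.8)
By \cref{prop:wallCrossingInequalitiesPermutreeFan} and the unique exchange property of $\Fan_\decoration$, each wall of $\Fan_\decoration$ contributes the same wall-crossing inequality as any other wall with the same pair of exchangeable rays, so that $\ctypeCone(\Fan_\decoration)$ is cut out by the inequalities $w_{I,J} \ge 0$, where $w_{I,J} \eqdef h(I) + h(J) - h(I \cap J) - h(I \cup J)$ and $\{I,J\}$ runs over the exchangeable pairs of $\Fan_\decoration$ characterized by conditions~(i)--(iii) of \cref{prop:exchangeablePairsPermutreeFan}. In particular every facet of $\ctypeCone(\Fan_\decoration)$ is one of these inequalities. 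Since conditions~(i) and~(iii) of the present statement coincide with those of \cref{prop:exchangeablePairsPermutreeFan}, while condition~(ii) here implies condition~(ii) there, the plan is to prove: \textbf{(A)} if $\{I,J\}$ is an exchangeable pair of $\Fan_\decoration$ violating the present condition~(ii), then $w_{I,J} \ge 0$ is redundant; and \textbf{(B)} if $\{I,J\}$ satisfies conditions~(i)--(iii) of the present statement, then $w_{I,J} \ge 0$ is facet-defining. Together these pin down the facets.

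For~\textbf{(A)}, an exchangeable pair violating the present~(ii) but satisfying \cref{prop:exchangeablePairsPermutreeFan}\,(ii) must have, say, $i \eqdef \max(I \ssm J)$ with $I \ssm J \ne \{i\}$ and $\decoration_i \in \{\downCirc, \upCirc\}$ (the case of $j$ being symmetric). Keeping the notation of the proof of \cref{prop:wallCrossingInequalitiesPermutreeFan}, write $I \ssm J = \{i\} \cup \underline{L} \cup \overline{L}$, where $\underline{L} \cup \overline{L} \ne \varnothing$ consists of nodes smaller than $i$ lying on the side of $i$ opposite to $I \cap J$, and set $I' \eqdef \{i\} \cup (I \cap J)$. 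One checks, using \cref{prop:raysPermutreeFan,prop:exchangeablePairsPermutreeFan} (or by building the two permutrees directly), that $I'$ and $J \cup \{i\}$ are rays of $\Fan_\decoration$ and that $\{I',J\}$ and $\{I, J \cup \{i\}\}$ are exchangeable pairs of $\Fan_\decoration$. Since $I' \cap J = I \cap J$, $\; I' \cup J = J \cup \{i\} = I \cap (J \cup \{i\})$, and $I \cup (J \cup \{i\}) = I \cup J$, the corresponding functionals satisfy the exact identity $w_{I,J} = w_{I, J \cup \{i\}} + w_{I', J}$; hence $w_{I,J} \ge 0$ is implied by the two wall-crossing inequalities on the right and is not facet-defining. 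As the $i$-side difference $\underline{L} \cup \overline{L}$ of the pair $\{I, J \cup \{i\}\}$ is strictly smaller than $I \ssm J$, iterating this reduction (and doing the same on the $j$-side) expresses $w_{I,J}$ as a nonnegative combination of the $w$-functionals of exchangeable pairs that satisfy the present~(i)--(iii).

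For~\textbf{(B)}, I would prove irredundancy by exhibiting, for each exchangeable pair $\{I_0, J_0\}$ satisfying~(i)--(iii), a height function $\b{h}^\star$ with $w_{I_0, J_0}(\b{h}^\star) = 0$ and $w_{I,J}(\b{h}^\star) > 0$ for every other exchangeable pair $\{I,J\}$ of $\Fan_\decoration$; then the face $\{w_{I_0,J_0} = 0\} \cap \ctypeCone(\Fan_\decoration)$ has a relative interior point, hence is a facet. The construction I have in mind starts from the submodular height function $h_\circ(X) = n|X|(n-|X|)/2$ of the permutahedron $\Perm$ restricted to $\cI_\decoration$: its slacks $w_{I,J}(h_\circ) = 2n\,|I \ssm J|\,|J \ssm I|$ are all strictly positive, so $h_\circ$ lies in the interior of $\ctypeCone(\Fan_\decoration)$. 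One then adds a multiple $t\,\delta$ of a function $\delta \colon \cI_\decoration \to \R$ that is submodular on every exchangeable pair of $\Fan_\decoration$ but strictly supermodular on $\{I_0, J_0\}$ (so that $w_{I_0,J_0}(\delta) < 0$ while $w_{I,J}(\delta) \ge 0$ otherwise); taking $t = -\,w_{I_0,J_0}(h_\circ) / w_{I_0,J_0}(\delta) > 0$ yields the desired $\b{h}^\star = h_\circ + t\,\delta$. Condition~(ii) is precisely what makes such a $\delta$ available: when $\decoration_i \in \{\downCirc, \upCirc\}$ with $I_0 \ssm J_0 \ne \{i\}$, the identity of step~\textbf{(A)} forces any functional supermodular on $\{I_0,J_0\}$ to be supermodular on a strictly smaller exchangeable pair as well, whereas under the present~(ii) the pair $\{I_0,J_0\}$ is rigid enough that its supermodularity can be confined to it alone. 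Building $\delta$ explicitly requires a case analysis over the four possibilities for each of $\decoration_i$ and $\decoration_j$, and verifying submodularity of $\delta$ on all competing pairs is where \cref{prop:raysPermutreeFan,prop:exchangeablePairsPermutreeFan} enter in full force.

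The main obstacle is step~\textbf{(B)}: step~\textbf{(A)} is a finite bookkeeping computation, but producing and verifying the certificate $\b{h}^\star$ in~\textbf{(B)} demands a uniform understanding of how all exchangeable pairs of $\Fan_\decoration$ are arranged around a given one, and this is where the technical weight of the proof lies.
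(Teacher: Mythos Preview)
Your overall architecture matches the paper's: first show that pairs violating the sharper condition~(ii) give redundant inequalities by an additive decomposition, then certify facet-ness of the remaining pairs by a witness point. But there is a genuine gap in~\textbf{(A)}, and~\textbf{(B)} is only a sketch where the paper carries out an explicit construction.

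In~\textbf{(A)}, your claim that $J \cup \{i\}$ is a ray of~$\Fan_\decoration$ and that $\{I, J \cup \{i\}\}$ is exchangeable is false in general. For $\decoration_i = \upCirc$ take $\decoration = \noneCirc\upCirc\noneCirc\noneCirc$, $I = \{1,2\}$, $J = \{3\}$ (an exchangeable pair with $i=2$); then $J \cup \{i\} = \{2,3\}$ fails \cref{prop:raysPermutreeFan} since $1,4 \notin \{2,3\}$ while $2 \in \decoration^+ \cap \{2,3\}$. Even for $\decoration_i = \downCirc$, where $J \cup \{i\}$ \emph{is} a ray, the pair $\{I, J \cup \{i\}\}$ may violate \cref{prop:exchangeablePairsPermutreeFan}\,(ii): with $\decoration = \noneCirc\noneCirc\downCirc\noneCirc\noneCirc$, $I = \{1,2,3\}$, $J = \{4\}$ one gets $I \ssm (J \cup \{3\}) = \{1,2\}$ with $\decoration_2 = \noneCirc$. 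Your formal identity $w_{I,J} = w_{I,J\cup\{i\}} + w_{I',J}$ is correct (the non-ray term $h(J\cup\{i\})$ cancels), but you cannot ``iterate this reduction'' as stated, because the intermediate pair is not exchangeable and hence not of the same shape. The paper deals with this by building explicit $\decoration$-permutrees and a \emph{sequence} of genuine rotations $X_k \to Y_k$ whose wall-crossing inequalities sum to the desired $h(I)+h(J') > h(I \cap J') + h(I \cup J)$; this guarantees that every summand is an actual defining inequality. The case $\decoration_i = \upCirc$ is then handled by the $\downCirc\!\leftrightarrow\!\upCirc$ (complementation) symmetry, which yields a \emph{different} decomposition, not the same formula with $J \cup \{i\}$.

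For~\textbf{(B)}, you correctly identify that this is where the work is, but you do not build~$\delta$. The paper does not perturb~$h_\circ$; instead it writes an explicit point $\b{p} = \lambda\b{x} + \mu\b{y} + \b{z}$ in~$\R^{\cI_\decoration}$, where $\b{x}_M$ and $\b{y}_M$ count subsets of~$M$ outside and inside $\nabla(I,J) \eqdef \wp(I\cup J) \ssm (\wp(I) \cup \wp(J))$ and $\b{z} = -\b{n}(I,J)$. The scalar products $\dotprod{\b{x}}{\b{n}(K,L)} = |\nabla(K,L) \ssm \nabla(I,J)|$ and $\dotprod{\b{y}}{\b{n}(K,L)} = |\nabla(K,L) \cap \nabla(I,J)|$ follow by inclusion--exclusion, and the key combinatorial step is that condition~(ii) forces $\nabla(K,L) \subsetneq \nabla(I,J)$ to imply $K \ssm L = I \ssm J$ and $L \ssm K = J \ssm I$, whence the sets $\{I,J,I\cap J,I\cup J\}$ and $\{K,L,K\cap L,K\cup L\}$ are disjoint and $\dotprod{\b{z}}{\b{n}(K,L)} = 0$. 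This is exactly the ``rigidity under~(ii)'' you allude to, made precise.
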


\begin{example}
For the decorations of \cref{fig:permutreeLattices,fig:permutreeFan,fig:permutreehedron}, the facets of~$\ctypeCone(\Fan_{\noneCirc{}\noneCirc{}\downCirc{}\noneCirc{}})$ correspond to the pairs of subsets~$\{1, 2\}$, $\{1, 3\}$, $\{12, 13\}$, $\{12, 23\}$, $\{123, 134\}$, $\{123, 234\}$, $\{13, 23\}$, $\{13, 34\}$, $\{134, 234\}$, $\{2, 3\}$, $\{23, 34\}$, $\{3, 4\}$, while the facets of~$\ctypeCone(\Fan_{\noneCirc{}\upDownCirc{}\upCirc{}\noneCirc{}})$ correspond to the pairs of subsets~$\{1, 234\}$, $\{12, 4\}$, $\{123, 124\}$, $\{124, 34\}$.
\end{example}

\begin{example}
Specializing \cref{prop:extremalExchangeablePairsPermutreeFan}, we recover that all pairs of exchangeable rays of~$\Fan_\decoration$ described in \cref{exm:exchangeablePairsPermutreeFan} define facets of the type cone~$\ctypeCone(\Fan_\decoration)$ when~$\decoration = \noneCirc^n$ or~$\decoration = \upDownCirc^n$. In contrast, when~${\decoration = \downCirc^n}$, only the pairs of intervals~$\{{[i,j[}, {]i,j]}\}$ for some~$1 \le i < j \le n$ correspond to facets of the type cone~$\ctypeCone(\Fan_{\downCirc^n})$ (equivalently, the internal diagonals~$(i-1,j)$ and~$(i,j+1)$ of the $(n+2)$-gon that just differ by a shift).
\end{example}

\begin{remark}
\label{rem:extremalExchangeablePairsPermutreeFan}
Observe that combining the characterization of \cref{prop:raysPermutreeFan} with the condition of \cref{prop:extremalExchangeablePairsPermutreeFan}\,(ii) implies that 
\begin{itemize}
\item $[1,i[$ is included in $I \ssm J$ if~$\decoration_i = \upDownCirc$, in $[n] \ssm (I \cup J)$ if~$\decoration_i = \downCirc$, and in~$I \cap J$ if~$\decoration_i = \upCirc$, 
\item $]j,n]$ is included in $J \ssm I$ if~$\decoration_j = \upDownCirc$, in $[n] \ssm (I \cup J)$ if~$\decoration_j = \downCirc$, and in~$I \cap J$ if~$\decoration_j = \upCirc$.
\end{itemize}
In particular, $I \ssm J$ is either~$\{i\}$ or~$[1,i]$ (and not both except if~$i=1$), and $J \ssm I$ is either~$\{j\}$ or~$[j,n]$ (and not both except if~$j=n$).
The latter is however not equivalent to \cref{prop:extremalExchangeablePairsPermutreeFan}\,(ii), since for instance the subsets~$I = [1,2]$ and~$J = [3]$ do not define a facet~$\ctypeCone(\Fan_{\downCirc^3})$.
\end{remark}

\begin{proof}[Proof of \cref{prop:extremalExchangeablePairsPermutreeFan}]
We consider two exchangeable rays~$\ray(I)$ and~$\ray(J)$, so that~$I$ and~$J$ satisfy the conditions of \cref{prop:raysPermutreeFan,prop:exchangeablePairsPermutreeFan}. We will show that they satisfy the additional condition of \cref{prop:extremalExchangeablePairsPermutreeFan}\,(ii) if and only if the wall-crossing inequality corresponding to the exchange of~$\ray(I)$ and~$\ray(J)$ defines a facet of the type cone~$\ctypeCone(\Fan_\decoration)$.

\medskip
Assume first that~$I$ and~$J$ do not satisfy \cref{prop:extremalExchangeablePairsPermutreeFan}\,(ii).
Since they satisfy \cref{prop:exchangeablePairsPermutreeFan}\,(ii), we have~$|I \ssm J| > 1$ and~$\decoration_i \in \{\downCirc, \upCirc\}$, or~$|J \ssm I| > 1$ and~$\decoration_j \in \{\downCirc, \upCirc\}$.
Let us detail the case~$|I \ssm J| > 1$ and~$\decoration_i = \downCirc$, the other situations being symmetric.
As usual, we define
\[
i \eqdef \max(I \ssm J), \;\; j \eqdef \min(J \ssm I), \;\; D \eqdef I \cap J, \;\; U \eqdef [n] \ssm (I \cup J), \;\; L \eqdef I \ssm J \ssm \{i\}, \;\; R \eqdef J \ssm I \ssm \{j\}.
\]
For each of the subsets~$D$, $U$, $L$ and~$R$, we choose an arbitrary permutree, and we construct $\decoration$-permutrees~$T$ and~$S$ as in the proof of \cref{prop:exchangeablePairsPermutreeFan} and as illustrated in \cref{fig:rotationPermutreesFacetsTypeCone}, so that rotation from~$T$ to~$S$ exchanges~$\ray(I)$ to~$\ray(J)$.
Note that we voluntarily placed~$R$ at the same level as node~$j$, as it can be the right ancestor or descendant subtree of~$j$, depending on~$\decoration_j$.
We know that the wall-crossing inequality corresponding to this rotation is
\begin{equation}
\label{eq:wallCrossingInequality1}
h(I) + h(J) > h(I \cap J) + h(I \cup J).
\end{equation}

Consider now the $\decoration$-permutrees~$V$ and~$W$ of \cref{fig:rotationPermutreesFacetsTypeCone}.
In the tree~$V$ (resp.~$W$), the leftmost lower blossom of~$L$ is connected to~$j$ (resp.~$i$), and the rightmost upper blossom of~$L$ is connected to the blossom of~$U$ to which~$j$ was connected in~$T$.
Note that these are indeed $\decoration$-permutrees since~$\decoration_i = \downCirc$.
The rotation between~$V$ and~$W$ yields the wall-crossing inequality
\begin{equation}
\label{eq:wallCrossingInequality2}
h(I') + h(J) > h(I \cap J) + h(I' \cup J),
\end{equation}
where~$I' \eqdef \{i\} \cup D = \{i\} \cup (I \cap J)$.
Note that we could also have checked that~$I'$ and~$J$ satisfy the conditions of \cref{prop:raysPermutreeFan,prop:exchangeablePairsPermutreeFan}.

Consider now the $\decoration$-permutrees~$X$ and~$Y$ of \cref{fig:rotationPermutreesFacetsTypeCone}.
We have already seen that~$Y$ is indeed a $\decoration$-permutree because it is just equal to~$V$.
The same arguments show that~$X$ is also a $\decoration$-permutree.
Consider now the rotation of the edge joining~$L$ to~$j$.
If~$i$ and~$j$ are connected to the same node in~$L$, then this rotation gives the $\decoration$-permutree~$Y$.
Otherwise, this rotation moves a part of~$L$ in between~$j$ and~$U$ and leaves the remaining part of~$L$ in between~$i$ and~$j$.
Rotating again and again the edge between this remaining part of~$L$ and~$j$, we finally obtain the $\decoration$-permutree~$Y$.
More formally, we can consider the sequence of rotations between the $\decoration$-permutrees~$X_k$ and~$Y_k$ illustrated in \cref{fig:rotationPermutreesFacetsTypeCone}, where at each step we use~$X_{k+1} = Y_k$.
In these $\decoration$-permutrees, we have~$\underline{L}_k \sqcup \overline{L}_k = L$.
Moreover, $\underline{L}_{k+1}$ is obtained from~$\underline{L}_k$ by deleting the node connected to~$j$ in~$X_k$ and all its left ancestors and descendants.
Hence, starting with~$X_0 = X$, we will end with~$Y_p = Y$.
Summing the wall-crossing inequalities corresponding to the rotations between~$X_k$ and~$Y_k$, we thus obtain the inequality
\begin{equation}
\label{eq:wallCrossingInequality3}
h(I) + h(J') > h(I \cap J') + h(I \cup J),
\end{equation}
where~$J' \eqdef \{i\} \cup J$.

Finally, observe that~$I' = I \cap J'$ and~$J' = I' \cup J$.
We therefore obtain that the wall-crossing inequality \eqref{eq:wallCrossingInequality1} can be expressed as the sum of inequalities \eqref{eq:wallCrossingInequality2} and \eqref{eq:wallCrossingInequality3}, showing that~$I$ and~$J$ do not define a facet of the type cone~$\ctypeCone(\Fan_\decoration)$.

\begin{figure}[t]
	\capstart
	\centerline{\begin{tabular}{c@{\qquad}c@{\qquad}c@{\qquad}c}
	$T = $
	\begin{tikzpicture}[baseline=-.2cm, scale=.9]
		\node[circle, draw, inner sep=2pt] (i) at (-.5,-.5) {$i$};
		\node[circle, draw, inner sep=1pt] (j) at (.5,.5) {$j$};
		\node[inner sep=1pt] (U) at (0,1) {$U$};
		\node[inner sep=1pt] (D) at (0,-1) {$D$};
		\node[inner sep=1pt] (L) at (-1,-1) {$L$};
		\node[inner sep=1pt] (R) at (1.1,.5) {$R$};
		\draw (D) -- (i);
		\draw (L) -- (i);
		\draw[red, thick] (i) -- (j);
		\draw (j) -- (U);
		\draw (j) -- (R);
	\end{tikzpicture}
	&
	$V = $
	\begin{tikzpicture}[baseline=-.1cm, scale=.9]
		\node[circle, draw, inner sep=2pt] (i) at (-.5,-.5) {$i$};
		\node[circle, draw, inner sep=1pt] (j) at (.5,.5) {$j$};
		\node[inner sep=1pt] (U) at (0,1.5) {$U$};
		\node[inner sep=1pt] (D) at (0,-1) {$D$};
		\node[inner sep=1pt] (L) at (-1,1) {$L$};
		\node[inner sep=1pt] (l) at (-1,-1) {\phantom{$L$}};
		\node[inner sep=1pt] (R) at (1.1,.5) {$R$};
		\draw (D) -- (i);
		\draw (l) -- (i);
		\draw[red, thick] (i) -- (j);
		\draw (j) -- (L);
		\draw (j) -- (R);
		\draw (L) -- (U);
	\end{tikzpicture}
	&
	$X = $
	\begin{tikzpicture}[baseline=-.3cm, scale=.9]
		\node[circle, draw, inner sep=2pt] (i) at (-.5,-1) {$i$};
		\node[circle, draw, inner sep=1pt] (j) at (.5,.5) {$j$};
		\node[inner sep=1pt] (U) at (0,1) {$U$};
		\node[inner sep=1pt] (D) at (0,-1.5) {$D$};
		\node[inner sep=1pt] (L) at (-1,-.5) {$L$};
		\node[inner sep=1pt] (l) at (-1,-1.5) {\phantom{$L$}};
		\node[inner sep=1pt] (R) at (1.1,.5) {$R$};
		\draw (D) -- (i);
		\draw (l) -- (i);
		\draw (i) -- (L);
		\draw[red, thick] (L) -- (j);
		\draw (j) -- (U);
		\draw (j) -- (R);
	\end{tikzpicture}
	&
	$X_k = $
	\begin{tikzpicture}[baseline=-.1cm, scale=.9]
		\node[circle, draw, inner sep=2pt] (i) at (-.5,-1) {$i$};
		\node[circle, draw, inner sep=1pt] (j) at (.5,.5) {$j$};
		\node[inner sep=1pt] (U) at (0,1.5) {$U$};
		\node[inner sep=1pt] (D) at (0,-1.5) {$D$};
		\node[inner sep=1pt] (oLk) at (-1,1) {$\overline{L}_k$};
		\node[inner sep=1pt] (uLk) at (-1,-.5) {$\underline{L}_k$};
		\node[inner sep=1pt] (l) at (-1,-1.5) {\phantom{$L$}};
		\node[inner sep=1pt] (R) at (1.1,.5) {$R$};
		\draw (D) -- (i);
		\draw (l) -- (i);
		\draw (i) -- (uLk);
		\draw[red, thick] (uLk) -- (j);
		\draw (j) -- (oLk);
		\draw (j) -- (R);
		\draw (oLk) -- (U);
	\end{tikzpicture}
	\\
	\\
	$\Big\updownarrow$
	&
	$\Big\updownarrow$
	&
	$\Big\updownarrow$
	&
	$\Big\updownarrow$
	\\
	\\
	$S = $
	\begin{tikzpicture}[baseline=-.2cm, scale=.9]
		\node[circle, draw, inner sep=2pt] (i) at (-.5,.5) {$i$};
		\node[circle, draw, inner sep=1pt] (j) at (.5,-.5) {$j$};
		\node[inner sep=1pt] (U) at (0,1) {$U$};
		\node[inner sep=1pt] (D) at (0,-1) {$D$};
		\node[inner sep=1pt] (L) at (-1,0) {$L$};
		\node[inner sep=1pt] (R) at (1.1,-.5) {$R$};
		\draw (D) -- (j);
		\draw (j) -- (R);
		\draw[red, thick] (j) -- (i);
		\draw (i) -- (U);
		\draw (L) -- (i);
	\end{tikzpicture}
	&
	$W = $
	\begin{tikzpicture}[baseline=-.1cm, scale=.9]
		\node[circle, draw, inner sep=2pt] (i) at (-.5,.5) {$i$};
		\node[circle, draw, inner sep=1pt] (j) at (.5,-.5) {$j$};
		\node[inner sep=1pt] (U) at (0,1.5) {$U$};
		\node[inner sep=1pt] (D) at (0,-1) {$D$};
		\node[inner sep=1pt] (l) at (-1,0) {\phantom{$L$}};
		\node[inner sep=1pt] (L) at (-1,1) {$L$};
		\node[inner sep=1pt] (R) at (1.1,-.5) {$R$};
		\draw (D) -- (j);
		\draw (j) -- (R);
		\draw[red, thick] (j) -- (i);
		\draw (l) -- (i);
		\draw (i) -- (L);
		\draw (L) -- (U);
	\end{tikzpicture}
	&
	$Y = $
	\begin{tikzpicture}[baseline=-.1cm, scale=.9]
		\node[circle, draw, inner sep=2pt] (i) at (-.5,-.5) {$i$};
		\node[circle, draw, inner sep=1pt] (j) at (.5,.5) {$j$};
		\node[inner sep=1pt] (U) at (0,1.5) {$U$};
		\node[inner sep=1pt] (D) at (0,-1) {$D$};
		\node[inner sep=1pt] (L) at (-1,1) {$L$};
		\node[inner sep=1pt] (l) at (-1,-1) {\phantom{$L$}};
		\node[inner sep=1pt] (R) at (1.1,.5) {$R$};
		\draw (D) -- (i);
		\draw (l) -- (i);
		\draw (i) -- (j);
		\draw[red, thick] (j) -- (L);
		\draw (j) -- (R);
		\draw (L) -- (U);
	\end{tikzpicture}
	&
	$Y_k = $
	\begin{tikzpicture}[baseline=-.1cm, scale=.9]
		\node[circle, draw, inner sep=2pt] (i) at (-.5,-1) {$i$};
		\node[circle, draw, inner sep=1pt] (j) at (.5,.5) {$j$};
		\node[inner sep=1pt] (U) at (0,1.5) {$U$};
		\node[inner sep=1pt] (D) at (0,-1.5) {$D$};
		\node[inner sep=1pt] (oLk) at (-1,1) {$\overline{L}_{k+1}$};
		\node[inner sep=1pt] (uLk) at (-1,-.5) {$\underline{L}_{k+1}$};
		\node[inner sep=1pt] (l) at (-1,-1.5) {\phantom{$L$}};
		\node[inner sep=1pt] (R) at (1.1,.5) {$R$};
		\draw (D) -- (i);
		\draw (l) -- (i);
		\draw (i) -- (uLk);
		\draw (uLk) -- (j);
		\draw[red, thick] (j) -- (oLk);
		\draw (j) -- (R);
		\draw (oLk) -- (U);
	\end{tikzpicture}
	\\
\end{tabular}}
	\caption{Some rotations in $\decoration$-permutrees. The third column is a combination of rotations of the form given in the fourth column.}
	\label{fig:rotationPermutreesFacetsTypeCone}
\end{figure}

\medskip
Conversely, assume that~$I$ and~$J$ satisfy the conditions of \cref{prop:extremalExchangeablePairsPermutreeFan}.
To prove that the wall-crossing inequality associated with~$\{I,J\}$ indeed defines a facet of the type cone~$\ctypeCone(\Fan_\decoration)$, we exhibit a point~$\b{p}$ that satisfies the wall-crossing inequality associated with any pair~$\{K,L\}$ fulfilling the conditions of \cref{prop:extremalExchangeablePairsPermutreeFan}, except for the one associated with the pair~$\{I,J\}$.

For this, we need some notations and definitions.
We denote by~$\wp(A) \eqdef \{X \subseteq A\}$ the set of subsets of a set~$A$, and define~$\nabla(A,B) \eqdef \wp(A \cup B) \ssm \big( \wp(A) \cup \wp(B) \big)$ for two sets~$A, B$.
For a pair~$\{A,B\}$ with~$A \ssm B \ne \varnothing \ne B \ssm A$, observe that
\begin{itemize}
\item $A \cup B$ is the inclusion-maximal element of~$\nabla(A,B)$,
\item $A$ and $B$ are the inclusion-maximal subset of~$A \cup B$ not in~$\nabla(A,B)$,
\item the pairs in~$\nabla(A,B)$ are precisely the pairs~$\{a,b\}$ for~$a \in A \ssm B$ and~$b \in B \ssm A$.
\end{itemize}
This implies that for two pairs~$\{A,B\}$ and~$\{C,D\}$ with~$A \ssm B \ne \varnothing \ne B \ssm A$ and~$C \ssm D \ne \varnothing \ne D \ssm C$,
\begin{itemize}
\item if~$\nabla(A,B) = \nabla(C,D)$, then~$\{A,B\} = \{C,D\}$,
\item if~$\nabla(A,B) \subseteq \nabla(C,D)$, then~$A \cup B \subseteq C \cup D$, and up to reversing the roles of~$C$ and~$D$, $A \ssm B \subseteq C \ssm D$ and~$B \ssm A \subseteq D \ssm C$.
\end{itemize}

We denote by~$(\b{g}_M)_{M \in \cI}$ the standard basis of the space~$\R^\cI$ indexed by the rays~$\cI$ of~$\Fan_\decoration$.
Let~$\b{n}(I,J) \eqdef \b{g}_I + \b{g}_J - \b{g}_{I \cap J} - \b{g}_{I \cup J}$ denote the normal vector of the wall-crossing inequality associated with our pair~$\{I,J\}$ of subsets.
We are thus looking for a point~$\b{p} \in \R^\cI$ such that~$\dotprod{\b{p}}{\b{n}(I,J)} < 0$ while~$\dotprod{\b{p}}{\b{n}(K,L)} \ge 0$ for all pairs~$\{K,L\}$ that fulfill the conditions of \cref{prop:extremalExchangeablePairsPermutreeFan} and are distinct from~$\{I,J\}$.
To find such a point, we define three vectors~$\b{x}, \b{y}, \b{z} \in \R^\cI$ given by
\[
\b{x} \eqdef - \sum_{M \in \cI} |\wp(M) \ssm \nabla(I,J)| \, \b{g}_M,
\qquad
\b{y} \eqdef - \sum_{M \in \cI} |\wp(M) \cap \nabla(I,J)| \, \b{g}_M,
\qquad\text{and}\qquad
\b{z} \eqdef -\b{n}(I,J),
\]
and we consider the point~$\b{p} \in \R^\cI$ defined by
\[
\b{p} \eqdef \lambda \, \b{x} + \mu \, \b{y} + \b{z},
\]
where~$\lambda$ and~$\mu$ are arbitrary scalars such that~$\lambda > |\dotprod{\b{z}}{\b{n}(K,L)}|$ for any pair~$\{K,L\}$ of subsets and~$0 < \mu \, |\nabla(I,J)| < \dotprod{\b{z}}{\b{z}}$.
We will prove that the point~$\b{p}$ satisfies the desired inequalities.

For this, consider a pair~$\{K,L\}$ that fulfills the conditions of \cref{prop:extremalExchangeablePairsPermutreeFan}.
Observe that
\begin{align*}
& \dotprod{\b{x}}{\b{n}(K,L)} = \b{x}_K + \b{x}_L - \b{x}_{K \cap L} - \b{x}_{K \cup L} \\ & \qquad = - |\wp(K) \ssm \nabla(I,J)| - |\wp(L) \ssm \nabla(I,J)| + |\wp(K \cup L) \ssm \nabla(I,J)| + |\wp(K \cap L) \ssm \nabla(I,J)| \\ & \qquad = |\nabla(K,L) \ssm \nabla(I,J)|,
\end{align*}
by inclusion-exclusion principle.
Similarly
\[
\dotprod{\b{y}}{\b{n}(K,L)} = |\nabla(K,L) \cap \nabla(I,J)|.
\]
Finally,
\[
\dotprod{\b{z}}{\b{n}(K,L)} = - \dotprod{\b{n}(I,J)}{\b{n}(K,L)} = -\dotprod{\b{g}_I \! + \! \b{g}_J \! - \! \b{g}_{I \cup J} \! - \! \b{g}_{I \cap J}}{\b{g}_K \! + \! \b{g}_L \! - \! \b{g}_{K \cup L} \! - \! \b{g}_{K \cap L}}
\]
is a signed sum of Kronecker deltas~$\delta_{X,Y}$ where~$X$ ranges over~$\{I, J, I \cup J, I \cap J\}$ and~$Y$ ranges over~$\{K, L, K \cup L, K \cap L\}$.
We leave to the end of the proof the claim that these two sets are disjoint when~$\nabla(K,L) \subsetneq \nabla(I,J)$, so that~$\dotprod{\b{z}}{\b{n}(K,L)} = 0$.

We therefore obtain that
\[
\dotprod{\b{p}}{\b{n}(I,J)} = \mu \, |\nabla(I,J)| - \dotprod{\b{z}}{\b{z}} < 0
\]
since~$\mu \, |\nabla(I,J)| < \dotprod{\b{z}}{\b{z}}$, while
\[
\dotprod{\b{p}}{\b{n}(K,L)} = \lambda \, |\nabla(K,L) \ssm \nabla(I,J)| + \mu \, |\nabla(K,L) \cap \nabla(I,J)| - \dotprod{\b{z}}{\b{n}(K,L)} \ge 0
\]
for any other pair~$\{K,L\}$ that fulfills the conditions of \cref{prop:extremalExchangeablePairsPermutreeFan}.
Indeed, 
\begin{itemize}
\item if~$\nabla(K,L) \not\subseteq \nabla(I,J)$, then~$\lambda \, |\nabla(K,L) \ssm \nabla(I,J)| - \dotprod{\b{z}}{\b{n}(K,L)} \ge \lambda - \dotprod{\b{z}}{\b{n}(K,L)} > 0$ and~$\mu \, |\nabla(K,L) \cap \nabla(I,J)| \ge 0$,
\item if~$\nabla(K,L) \subsetneq \nabla(I,J)$, then~$\lambda \, |\nabla(K,L) \ssm \nabla(I,J)| = 0$ while~$\mu \, |\nabla(K,L) \cap \nabla(I,J)| > \mu$ and we claimed that~$\dotprod{\b{z}}{\b{n}(K,L)} = 0$.
\end{itemize}

We conclude by proving our claim that if~$\nabla(K,L) \subsetneq \nabla(I,J)$, then the sets~$\{I, J, I \cup J, I \cap J\}$ and~$\{K, L, K \cup L, K \cap L\}$ are disjoint.
Up to reversing the roles of~$I$ and~$J$ (resp.~$K$ and~$L$), we assume that~$i \eqdef \max(I \ssm J) < \min(J \ssm I) \defeq j$ (resp.~$k \eqdef \max(K \ssm L)< \min(L \ssm K) \defeq \ell$).
As observed earlier, the inclusion~$\nabla(K,L) \subsetneq \nabla(I,J)$ implies the inclusions~$K \ssm L \subseteq I \ssm J$ and~$L \ssm K \subseteq J \ssm I$, so that~$k \le i < j \le \ell$.
It turns out that the conditions of \cref{prop:extremalExchangeablePairsPermutreeFan} actually imply that $K \ssm L = I \ssm J$ and~$L \ssm K = J \ssm I$, so that~$k = i$ and~$j = \ell$.
Indeed,
\begin{itemize}
\item if~$I \ssm J = \{i\}$, then~$K \ssm L = \{i\}$ (since~$\varnothing \ne K \ssm L \subseteq I \ssm J$),
\item if~$\decoration_i = \upDownCirc$, then~$K \ssm L = I \ssm J = [1,i]$ (as otherwise~$i \in {]k, \ell[} \cap \decoration^- \cap \decoration^+$).
\end{itemize}
(The argument is symmetric for the equality~$L \ssm K = J \ssm I$.)
Observe now that
\begin{itemize}
\item if~$I \cap J = K \cap L$, then~$I = (I \cap J) \cup (I \ssm J) = (K \cap L) \cup (K \ssm L) = K$ and similarly~$J = L$,
\item if~$I \cup J = K \cup L$, then~$I = (I \cup J) \ssm (J \ssm I) = (K \cup L) \ssm (L \ssm K) = K$ and similarly~$J = L$,
\item if~$I = K$, then~$J = (J \ssm I) \cup (I \ssm (I \ssm J)) = (L \ssm K) \cup (K \ssm (K \ssm L)) = L$,
\item similarly, if~$J = L$, then~$I = K$.
\end{itemize}
In these four cases, we obtained that~$I = K$ and~$J = L$, which contradicts our assumption that~$\nabla(K,L) \ne \nabla(I,J)$.
Finally, observe that 
\begin{itemize}
\item $I \notin \{L, K \cup L, K \cap L\}$ since~$i \in I \ssm L$ and~$j \in L \ssm I$. Similarly, $J \notin \{K, K \cup L, K \cap L\}$, $K \notin \{J, I \cup J, I \cap J\}$ and $L \notin \{J, I \cup J, I \cap J\}$.
\item $I \cup J \ne K \cap L$ since~$i \in I \ssm L$. Similarly, $I \cap J \ne K \cup L$.
\end{itemize}
We have thus checked that the sets~$\{I, J, I \cup J, I \cap J\}$ and~$\{K, L, K \cup L, K \cap L\}$ are disjoint, which ends the proof.
\end{proof}

The proof of the following consequence of \cref{prop:extremalExchangeablePairsPermutreeFan} is almost identical to that of \cref{coro:numberExchangeablePairsPermutreeFan}.

\begin{corollary}
\label{coro:numberExtremalExchangeablePairsPermutreeFan}
The number~$\phi(\decoration)$ of facets of the type cone~$\ctypeCone(\Fan_\decoration)$ of the $\decoration$-permutree fan~$\Fan_\decoration$~is
\[
\phi(\decoration) = \sum_{\substack{1 \le i < j \le n \\ \forall \, i < k < j, \, \decoration_k \ne \upDownCirc}} \!\!\!\!\! \Omega(\decoration_1 \dots \decoration_{i-1})^{\decoration_i = \noneCirc} \cdot 2^{|\set{i < k < j}{\decoration_k = \noneCirc}|} \cdot \Omega(\decoration_n \dots \decoration_{j+1})^{\decoration_j = \noneCirc},
\]
where~$\Omega(\decoration_1 \dots \decoration_k)$ is defined inductively from~$\Omega(\varepsilon) = 1$ by
\[
\Omega(\decoration_1 \dots \decoration_k) = \begin{cases} 2 \cdot \Omega(\decoration_1 \dots \decoration_{k-1}) & \text{if } \decoration_k = \noneCirc, \\ 1 + \Omega(\decoration_1 \dots \decoration_{k-1}) & \text{if } \decoration_k \in \{\downCirc, \upCirc\}, \\ 2 & \text{if } \decoration_k = \upDownCirc. \end{cases}
\] 
\end{corollary}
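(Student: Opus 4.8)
The plan is to follow, almost verbatim, the argument proving \cref{coro:numberExchangeablePairsPermutreeFan}, replacing \cref{prop:exchangeablePairsPermutreeFan} by \cref{prop:extremalExchangeablePairsPermutreeFan} throughout. By \cref{prop:extremalExchangeablePairsPermutreeFan}, a facet of the type cone~$\ctypeCone(\Fan_\decoration)$ is recorded by an unordered pair~$\{I, J\}$ of subsets of~$[n]$ that are both rays of~$\Fan_\decoration$ (hence satisfy the conditions of \cref{prop:raysPermutreeFan}) and that fulfill conditions~(i)--(iii) of \cref{prop:extremalExchangeablePairsPermutreeFan}. To enumerate these pairs I would, exactly as in the previous proof, first select~$1 \le i < j \le n$ destined to play the roles~$i = \max(I \ssm J)$ and~$j = \min(J \ssm I)$ imposed by~(i); this produces the outer sum over pairs~$i < j$.

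For the middle indices~$i < k < j$, nothing changes: condition~(iii) of \cref{prop:extremalExchangeablePairsPermutreeFan} is literally identical to condition~(iii) of \cref{prop:exchangeablePairsPermutreeFan}, so it forces~$k \in I \cap J$ when~$\decoration_k = \downCirc$ and~$k \in [n] \ssm (I \cup J)$ when~$\decoration_k = \upCirc$, is incompatible with~$\decoration_k = \upDownCirc$ (whence the condition~$\decoration_k \ne \upDownCirc$ under the sum), and leaves exactly two possibilities ($I \cap J$ or~$[n] \ssm (I \cup J)$, as~$k$ can lie in neither~$I \ssm J$ nor~$J \ssm I$ once~$i < k < j$) when~$\decoration_k = \noneCirc$, accounting for the factor~$2^{|\set{i < k < j}{\decoration_k = \noneCirc}|}$.

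The only genuine difference lies in the prefix~$\decoration_1 \dots \decoration_{i-1}$ and, symmetrically, the suffix~$\decoration_n \dots \decoration_{j+1}$. Condition~(ii) of \cref{prop:extremalExchangeablePairsPermutreeFan} permits an index~$k < i$ to lie in~$I \ssm J$ only when~$\decoration_i = \upDownCirc$, whereas condition~(ii) of \cref{prop:exchangeablePairsPermutreeFan} allowed this whenever~$\decoration_i \ne \noneCirc$. Consequently, when~$\decoration_i \in \{\downCirc, \upCirc\}$ the combination of~(ii) with \cref{prop:raysPermutreeFan} recorded in \cref{rem:extremalExchangeablePairsPermutreeFan} forces~$[1,i[$ entirely into~$[n] \ssm (I \cup J)$ (for~$\downCirc$) or into~$I \cap J$ (for~$\upCirc$), a single choice; when~$\decoration_i = \upDownCirc$ it forces~$[1,i[ \subseteq I \ssm J$, again a single choice; and only when~$\decoration_i = \noneCirc$ does the prefix branch, in exactly the way already encoded by~$\Omega(\decoration_1 \dots \decoration_{i-1})$ in the proof of \cref{coro:numberExchangeablePairsPermutreeFan}. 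This is precisely what turns the exponent~$\decoration_i \ne \upDownCirc$ there into the exponent~$\decoration_i = \noneCirc$ here, and symmetrically for~$\decoration_j$. Multiplying the three independent contributions (prefix, middle, suffix) and summing over~$i < j$ then yields the claimed formula.

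The step demanding the most care is the bookkeeping inherited from \cref{coro:numberExchangeablePairsPermutreeFan}: verifying that these choices never violate the ray conditions of \cref{prop:raysPermutreeFan} for~$I$ or~$J$ and conversely that every admissible pair arises exactly once, and checking that the recursion defining~$\Omega$ faithfully counts the prefix configurations (in particular the interaction between the constraints ``$k \in I$ if~$\decoration_i \in \decoration^+$'' / ``$k \notin J$ if~$\decoration_i \in \decoration^-$'' and the remaining freedom). Since \cref{prop:extremalExchangeablePairsPermutreeFan} only strengthens condition~(ii) and leaves~(i) and~(iii) untouched, this verification transfers word for word with the single modification described above, and I would refer to the earlier proof for the details.
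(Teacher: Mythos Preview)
Your proposal is correct and matches the paper's own approach exactly: the paper simply states that the proof ``is almost identical to that of \cref{coro:numberExchangeablePairsPermutreeFan}'', and you have spelled out precisely the single modification (the strengthened condition~(ii) forces a unique prefix/suffix choice when~$\decoration_i \in \{\downCirc,\upCirc\}$, turning the exponent from~$\decoration_i \ne \upDownCirc$ into~$\decoration_i = \noneCirc$). Your write-up is in fact more detailed than what appears in the paper.
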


\begin{example}
For the decorations of \cref{fig:permutreeLattices,fig:permutreeFan,fig:permutreehedron}, $\phi(\noneCirc{}\noneCirc{}\downCirc{}\noneCirc{}) = 12$ and~$\phi(\noneCirc{}\upDownCirc{}\upCirc{}\noneCirc{}) = 4$.
\end{example}

\begin{example}
Specializing the formula of \cref{coro:numberExtremalExchangeablePairsPermutreeFan}, we recover the following classical numbers:
\begin{itemize}
\item when~$\decoration = \noneCirc^n$, the type cone~$\ctypeCone(\Fan_{\noneCirc^n})$ has~$2^{n-2} \binom{n}{2}$ facets,
\item when~$\decoration = \downCirc^n$, the type cone~$\ctypeCone(\Fan_{\downCirc^n})$ has~$\binom{n}{2}$ facets (equalling the number of squares of the form~$(i-1,i,j,j+1)$ in the $(n+2)$-gon),
\item when~$\decoration = \upDownCirc^n$, the type cone~$\ctypeCone(\Fan_{\upDownCirc^n})$ has~$n-1$ facets.
\end{itemize}
\end{example}

\begin{corollary}
\label{coro:numberExtremalExchangeablePairsPermutreeFanNoI}
If~$\decoration \in \{\downCirc, \upCirc, \upDownCirc\}^n$, we have~$\phi(\decoration) = |\set{1 \le i < j \le n}{\forall \, i < k < j, \, \decoration_k \ne \upDownCirc}|$.
\end{corollary}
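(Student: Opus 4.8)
The plan is to deduce this statement directly from the summation formula of \cref{coro:numberExtremalExchangeablePairsPermutreeFan} by specializing it to decorations with no occurrence of the symbol $\noneCirc$, exactly in the spirit of how \cref{coro:numberRaysPermutreeFanNoI} specializes \cref{coro:numberRaysPermutreeFan}. So the first step is simply to write down the general formula for $\phi(\decoration)$ and examine each of its three factors under the hypothesis $\decoration \in \{\downCirc, \upCirc, \upDownCirc\}^n$.

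The key observation is that, for such a decoration, no index $k$ satisfies $\decoration_k = \noneCirc$. Consequently: the exponents $\decoration_i = \noneCirc$ and $\decoration_j = \noneCirc$ are both false, so the factors $\Omega(\decoration_1 \dots \decoration_{i-1})^{\decoration_i = \noneCirc}$ and $\Omega(\decoration_n \dots \decoration_{j+1})^{\decoration_j = \noneCirc}$ each equal $1$; and the set $\set{i < k < j}{\decoration_k = \noneCirc}$ is empty, so the middle factor $2^{|\set{i < k < j}{\decoration_k = \noneCirc}|}$ equals $2^0 = 1$. Hence every summand of the formula in \cref{coro:numberExtremalExchangeablePairsPermutreeFan} collapses to $1$, and $\phi(\decoration)$ reduces to the number of index pairs over which the sum ranges, namely $|\set{1 \le i < j \le n}{\forall \, i < k < j, \, \decoration_k \ne \upDownCirc}|$. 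Since the sum in \cref{coro:numberExtremalExchangeablePairsPermutreeFan} already ranges over pairs $i < j$ (counting each once), this matches the claimed count with no spurious factor of~$2$.

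I do not expect a genuine obstacle here: the statement is a pure specialization. The only point deserving an explicit word is the convention governing the Iverson-bracket exponents, namely that $x^{\mathrm{false}} = 1$ for any value $x$; this makes the argument insensitive to the precise values of~$\Omega$ (which in any case are $\ge 1$ by the inductive definition $\Omega(\varepsilon) = 1$ and the three recursive cases, so no degeneracy can occur). I would therefore state this convention once at the start of the proof and then let the three-line simplification above conclude.
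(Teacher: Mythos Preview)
Your proposal is correct and matches the paper's approach: the paper states this corollary without proof, treating it as an immediate specialization of \cref{coro:numberExtremalExchangeablePairsPermutreeFan}, which is precisely what you do. Your explicit note on the Iverson-bracket convention $x^{\mathrm{false}} = 1$ is a helpful clarification that the paper leaves implicit.
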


\begin{corollary}
The type cone~$\ctypeCone(\Fan_\decoration)$ is simplicial if and only if~$\decoration_k \ne \noneCirc{}$ for any~$k \in {]1,n[}$.
\end{corollary}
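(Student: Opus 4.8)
The plan is to derive the statement from the two enumeration formulas \cref{coro:numberRaysPermutreeFan} and \cref{coro:numberExtremalExchangeablePairsPermutreeFan}. First I would set up the reduction to a numerical identity. The type cone $\ctypeCone(\Fan_\decoration)$ is a full-dimensional cone in $\R^{\rho(\decoration)}$ (it is the closure of the nonempty open cone $\typeCone(\Fan_\decoration)$, which contains the height vector of the $\decoration$-permutreehedron by \cref{thm:permutreehedron}), and its lineality space is precisely the space of translations of $\Permutreehedron$ inside the affine hyperplane $\Hyp$. Since $\Fan_\decoration$ is an essential complete simplicial fan living in $\hyp\cong\R^{n-1}$, this lineality has dimension $n-1$ (the fact recalled in \cref{subsec:typeCones}, applied with ambient dimension $n-1$ here). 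A full-dimensional cone with an $(n-1)$-dimensional lineality space is simplicial exactly when it has $\rho(\decoration)-(n-1)$ facets, so the claim amounts to the identity $\phi(\decoration)=\rho(\decoration)-n+1$. Writing $\rho(\decoration)=n-1+\Sigma(\decoration)$, where $\Sigma(\decoration)$ denotes the sum appearing in \cref{coro:numberRaysPermutreeFan}, this reduces further to proving $\phi(\decoration)=\Sigma(\decoration)$.

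Next I would compare $\phi(\decoration)$ and $\Sigma(\decoration)$ summand by summand. Both are indexed by the same pairs $1\le i<j\le n$ with $\decoration_k\ne\upDownCirc$ for all $i<k<j$, and the corresponding summands share the common factor $2^{|\set{i<k<j}{\decoration_k=\noneCirc}|}$; the only difference is that the $(i,j)$-summand of $\phi(\decoration)$ is additionally multiplied by $\Omega(\decoration_1\cdots\decoration_{i-1})$ when $\decoration_i=\noneCirc$ and by $\Omega(\decoration_n\cdots\decoration_{j+1})$ when $\decoration_j=\noneCirc$. A one-line induction on the recursion defining $\Omega$ gives $\Omega(w)\ge 1$ for every word $w$, with $\Omega(w)=1$ if and only if $w$ is empty. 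Hence $\phi(\decoration)\ge\Sigma(\decoration)$ term by term, and equality holds if and only if, for every admissible pair $(i,j)$, both extra factors equal $1$, i.e.\ ($\decoration_i=\noneCirc$ implies $i=1$) and ($\decoration_j=\noneCirc$ implies $j=n$).

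It then remains to match this with the condition in the statement. If $\decoration_k\ne\noneCirc$ for all $k\in{]1,n[}$, then in any pair $(i,j)$ with $1\le i<j\le n$ the equality $\decoration_i=\noneCirc$ forces $i\notin{]1,n[}$, hence $i=1$ since $i\le n-1$, and symmetrically $\decoration_j=\noneCirc$ forces $j=n$; so $\phi(\decoration)=\Sigma(\decoration)$ and $\ctypeCone(\Fan_\decoration)$ is simplicial. Conversely, if $\decoration_m=\noneCirc$ for some $m\in{]1,n[}$, the consecutive pair $(m,m+1)$ is admissible (its interval condition being vacuous) and has $\decoration_m=\noneCirc$ with $m\ne 1$, so $\Omega(\decoration_1\cdots\decoration_{m-1})\ge 2$; this strictly increases the $(m,m+1)$-summand while no other summand decreases, so $\phi(\decoration)>\Sigma(\decoration)=\rho(\decoration)-n+1$ and $\ctypeCone(\Fan_\decoration)$ is not simplicial.

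I do not anticipate a genuine difficulty once the ray and facet counts are available: the argument is a short term-wise comparison powered by the trivial estimate $\Omega\ge 1$. The one step needing care is the reduction in the first paragraph, namely establishing that being \emph{simplicial} for $\ctypeCone(\Fan_\decoration)$ is equivalent to having $\rho(\decoration)-n+1$ facets; this relies on the lineality space of the type cone being $(n-1)$-dimensional, itself a consequence of the ambient space of the permutree fan being the hyperplane $\hyp$.
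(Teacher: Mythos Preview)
Your proposal is correct and follows essentially the same approach as the paper: reduce simpliciality of~$\ctypeCone(\Fan_\decoration)$ to the numerical identity~$\phi(\decoration)=\rho(\decoration)-(n-1)$, then compare the sums of \cref{coro:numberRaysPermutreeFan} and \cref{coro:numberExtremalExchangeablePairsPermutreeFan} term by term using that the extra factor $\Omega(\decoration_1\cdots\decoration_{i-1})^{\decoration_i=\noneCirc}\cdot\Omega(\decoration_n\cdots\decoration_{j+1})^{\decoration_j=\noneCirc}$ is~$\ge 1$ with equality exactly when no~$\noneCirc$ occurs at an interior position among~$i,j$. The only cosmetic difference is that you witness the strict inequality via the pair~$(m,m+1)$ and the left factor, whereas the paper uses~$(k-1,k)$ and the right factor.
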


\begin{proof}
The type cone~$\ctypeCone(\Fan_\decoration)$ is simplicial if and only if the number~$\rho(\decoration)$ of rays of~$\Fan_\decoration$ and the number~$\phi(\decoration)$ of facets of~$\ctypeCone(\Fan_\decoration)$ satisfy the equality $\rho(\decoration) = \phi(\decoration) + n - 1$.
Considering the formulas of \cref{coro:numberRaysPermutreeFan} for~$\rho(\decoration)$ and of \cref{coro:numberExtremalExchangeablePairsPermutreeFan} for~$\phi(\decoration)$, we observe that $\rho(\decoration) = \phi(\decoration) + n - 1$ if and only if~$\psi(i,j) \eqdef \Omega(\decoration_1 \dots \decoration_{i-1})^{\decoration_i = \noneCirc} \cdot \Omega(\decoration_n \dots \decoration_{j+1})^{\decoration_j = \noneCirc}$ is always equal to~$1$ in the formula for~$\phi(\decoration)$.
It is clearly the case if~$\decoration_k \ne \noneCirc{}$ for any~$k \in {]1,n[}$.
Conversely, if~$\decoration_k = \noneCirc{}$ for some~$k \in {]1,n[}$, then we have~$\psi(k-1,k) \ge 2$ (and also~$\psi(k,k+1) \ge 2$), so that the equality does not hold.
\end{proof}

%%%%%%%%%%%

\subsection{Kinematic permutreehedra}
\label{subsec:kinematicPermutreehedra}

\newcommandx{\Fd}{\mathfrak{F}}
\newcommandx{\Rd}{\mathfrak{R}}
\newcommandx{\ldij}[3][1=i, 2=j]{\smash{p^{#3}_{#1,#2}}}
\newcommandx{\rdij}[3][1=i, 2=j]{\smash{q^{#3}_{#1,#2}}}
Applying \cref{prop:simplicialTypeCone}, we obtain the following realizations of the $\decoration$-permutree fans in the kinematic space~\cite{ArkaniHamedBaiHeYan} when~$\decoration \in \{\downCirc, \upCirc, \upDownCirc\}^n$.
To simplify our statement, we assume that~${\decoration_1 = \decoration_n = \upDownCirc}$ (this assumption does not lose generality as the decorations~$\decoration_1$ and~$\decoration_n$ are irrelevant in all constructions).
Consider the sets
\[
\Fd \eqdef \set{1 \le i < j \le n}{\forall \, i < k < j, \, \decoration_k \ne \upDownCirc}
\quad\text{and}\quad
\Rd \eqdef \{0, 1\} \times [n]^2 \times \{0, 1\}
\]
and define~$\ldij{\varepsilon}$ and~$\rdij{\varepsilon}$ for~$(i,j) \in \Fd$ and~$\varepsilon \in \{+, -\}$ by
\[
\ldij{\varepsilon} \! \eqdef \! \begin{cases} \min \big( \{j\} \cup \big( {]i,j[} \cap \decoration^\varepsilon \big) \big) - 1  & \text{if } i \in \decoration^\varepsilon, \\ i-1 & \text{if } i \notin \decoration^\varepsilon, \end{cases}
\qquad
\rdij{\varepsilon} \! \eqdef \! \begin{cases} \max \big( \{i\} \cup \big( {]i,j[} \cap \decoration^\varepsilon \big) \big) + 1  & \text{if } j \in \decoration^\varepsilon, \\ j+1 & \text{if } j \notin \decoration^\varepsilon. \end{cases}
\]
Using these notations, we obtain the following realizations of the $\decoration$-permutree fan.

\begin{corollary}
\label{coro:kinematicSpace}
Let~$\decoration \in \{\downCirc, \upCirc, \upDownCirc\}^n$ with~${\decoration_1 = \decoration_n = \upDownCirc}$, and the notations introduced above.
Then, for any~$\b{u} \in \R_{>0}^{\Fd}$, the polytope~$Q_\decoration(\b{u})$ defined by
\[
\set{\b{z} \in \R_{\ge0}^{\Rd}\!}{\begin{array}{@{}l@{}} \b{z}_{(\ell, \, p, \, q, \, r)} = 0 \text{ if } (p, q) \notin \Fd, \quad \b{z}_{(\ell, \, p, \, q, \, r)} = \b{z}_{(\ell', \, p, \, q, \, r')} \text{ if } p + 1 \ne q, \quad\text{and for all } (i,j) \in \Fd, \\ \b{z}_{(1, \, \ldij{+}, \, \rdij{-}, \, 0)} \! + \b{z}_{(0, \, \ldij{-}, \, \rdij{+}, \, 1)} \! - \b{z}_{(i \notin \decoration^-, \, \ldij[i][j+1]{-}, \, \rdij[i-1][j]{-}, \, j \notin \decoration^-)} \! - \b{z}_{(i \in \decoration^+, \, \ldij[i][j+1]{+}, \, \rdij[i-1][j]{+}, \, j \in \decoration^+)} \! = \b{u}_{(i, \, j)} \end{array}}
\]
is a $\decoration$-permutreehedron, whose normal fan is the $\decoration$-permutree fan~$\Fan_\decoration$.
Moreover, the polytopes~$Q_\decoration(\b{u})$ for~$\b{u} \in \R_{>0}^{\Fd}$ describe all polytopal realizations of the $\decoration$-permutree fan~$\Fan_\decoration$.
\end{corollary}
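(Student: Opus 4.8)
The plan is to feed the $\decoration$-permutree fan~$\Fan_\decoration$ into \cref{prop:simplicialTypeCone}. Since~$\decoration \in \{\downCirc, \upCirc, \upDownCirc\}^n$ has no entry~$\noneCirc$, the last corollary of \cref{subsec:extremalExchangeablePairsPermutreeFan} guarantees that~$\ctypeCone(\Fan_\decoration)$ is simplicial, so \cref{prop:simplicialTypeCone} applies and tells us that the polytopes~$\set{\b{z} \in \R_{\ge0}^{\cI_\decoration}}{\b{K}\b{z} = \b{u}}$, for~$\b{u}$ in the open positive orthant, describe all polytopal realizations of~$\Fan_\decoration$, where~$\b{K}$ is the matrix whose rows are the inner normal vectors of the facets of~$\ctypeCone(\Fan_\decoration)$. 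Everything then reduces to writing the system~$\b{K}\b{z} = \b{u}$ explicitly and recognizing it as the system defining~$Q_\decoration(\b{u})$.

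First I would read off the rows of~$\b{K}$. By \cref{prop:extremalExchangeablePairsPermutreeFan}, \cref{rem:extremalExchangeablePairsPermutreeFan} and \cref{coro:numberExtremalExchangeablePairsPermutreeFanNoI}, the facets of~$\ctypeCone(\Fan_\decoration)$ are in bijection with~$\Fd$, the facet indexed by~$(i,j) \in \Fd$ being cut out by the unique pair~$\{I, J\}$ with~$i = \max(I \ssm J)$ and~$j = \min(J \ssm I)$. By \cref{prop:wallCrossingInequalitiesPermutreeFan}, its wall-crossing inequality is~$h(I) + h(J) \ge h(I \cap J) + h(I \cup J)$ (with the convention~$h(\varnothing) = h([n]) = 0$), so the matching row of~$\b{K}$ is~$\b{g}_I + \b{g}_J - \b{g}_{I \cap J} - \b{g}_{I \cup J}$ in the standard basis~$(\b{g}_M)_{M \in \cI_\decoration}$, and~$\b{K}\b{z} = \b{u}$ becomes
\[
\b{z}_I + \b{z}_J - \b{z}_{I \cap J} - \b{z}_{I \cup J} = \b{u}_{(i,j)} \qquad \text{for all } (i,j) \in \Fd.
\]
Then I would make these four subsets explicit. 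As~$(i,j) \in \Fd$ the interval~${]i,j[}$ has no~$\upDownCirc$, and as~$\decoration$ has no~$\noneCirc$, condition~(iii) of \cref{prop:extremalExchangeablePairsPermutreeFan} forces~$I \cap J$ to agree with~$\decoration^-$ and~$I \cup J$ to agree with~$[n] \ssm \decoration^+$ on~${]i,j[}$, while \cref{rem:extremalExchangeablePairsPermutreeFan} determines the behaviour of~$I$, $J$, $I \cap J$, $I \cup J$ on the prefix~$[1,i]$ and suffix~$[j,n]$ from~$\decoration_i$ and~$\decoration_j$ (the assumption~$\decoration_1 = \decoration_n = \upDownCirc$ making the cases~$i = 1$ and~$j = n$ uniform). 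The definitions of~$\ldij{\varepsilon}$ and~$\rdij{\varepsilon}$ are precisely tailored so that, in the encoding introduced below, $I$, $J$, $I \cap J$, $I \cup J$ are recorded by~$(1, \ldij{+}, \rdij{-}, 0)$, $(0, \ldij{-}, \rdij{+}, 1)$, $(i \notin \decoration^-, \ldij[i][j+1]{-}, \rdij[i-1][j]{-}, j \notin \decoration^-)$ and~$(i \in \decoration^+, \ldij[i][j+1]{+}, \rdij[i-1][j]{+}, j \in \decoration^+)$.

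Finally I would set up the encoding~$\cI_\decoration \hookrightarrow \Rd$, sending a ray~$\ray(M)$ to the quadruple recording whether~$1 \in M$, the largest~$p$ such that~$[1,p]$ is contained in or disjoint from~$M$, the smallest~$q$ such that~$[q,n]$ is contained in or disjoint from~$M$, and whether~$n \in M$. Since~$\decoration$ has no~$\noneCirc$, a ray is recovered from its quadruple by \cref{prop:raysPermutreeFan}, so this map is injective, and the two families of equations~$\b{z}_{(\ell,p,q,r)}=0$ for~$(p,q) \notin \Fd$ and~$\b{z}_{(\ell,p,q,r)} = \b{z}_{(\ell',p,q,r')}$ for~$p+1 \ne q$ record, respectively, that such a quadruple is the image of no ray, and that its end entries~$\ell,r$ are forced by~$\decoration_{p+1}$ and~$\decoration_{q-1}$ (as in the proof of \cref{coro:numberRaysPermutreeFan}), the quadruples with~$p+1=q$ carrying the~$2(n-1)$ rays~$[1,i]$ and~$[i+1,n]$. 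Transporting the system~$\b{K}\b{z}=\b{u}$ through this encoding then turns it into the displayed defining system of~$Q_\decoration(\b{u})$, and \cref{prop:simplicialTypeCone} gives the statement. The main obstacle is precisely this last identification: matching the closed-form indices~$\ldij{\varepsilon}$ and~$\rdij{\varepsilon}$ with the four subsets~$I$, $J$, $I \cap J$, $I \cup J$ of each facet pair, and checking that the two families of equations together with the encoding correctly account for which coordinates of~$\R^\Rd$ index a ray, coincide, or are determined, is a routine but delicate case analysis on~$\decoration_i$, $\decoration_j$ and the boundary positions~$i=1$, $j=n$.
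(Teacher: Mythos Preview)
Your proposal is correct and follows essentially the same route as the paper: invoke \cref{prop:simplicialTypeCone} once the type cone is known to be simplicial, parametrize the facets by~$\Fd$ and the rays by quadruples in~$\Rd$, and then match the four subsets~$I$, $J$, $I\cap J$, $I\cup J$ of each facet pair with the explicit indices~$(\ell,\ldij{\varepsilon},\rdij{\varepsilon},r)$. The paper sets up the correspondence in the opposite direction (defining a map~$\Rd \to 2^{[n]}$, $(\ell,p,q,r) \mapsto R_{(\ell,p,q,r)}$, rather than your~$\cI_\decoration \hookrightarrow \Rd$), and spends the bulk of its proof on exactly the case analysis you flag as the main obstacle, but the strategy is identical.
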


\begin{example}
Specializing the construction of \cref{coro:kinematicSpace}, we obtain:
\begin{itemize}
\item when~$\decoration = \upDownCirc\downCirc^{n-2}\upDownCirc$, we have
\[
\qquad
\ldij{-} = \begin{cases} j-1 & \text{if } i = 1, \\ i-1 & \text{if } i \ne 1, \end{cases}
\qquad
\ldij{+} = i,
\qquad
\rdij{-} = \begin{cases} i+1 & \text{if } j = n, \\ j+1 & \text{if } j \ne n, \end{cases}
\qquad\text{and}\qquad
\rdij{+} = j,
\]
so that the polytope~$Q_\decoration(\b{u})$ is equivalent to the \defn{kinematic associahedron} of~\cite{ArkaniHamedBaiHeYan}:
\[
\qquad
\set{\b{y} \in \R^{\binom{[0,n+1]}{2}}\!}{\begin{array}{@{}l@{}} \b{y} \ge 0, \qquad \b{y}_{(i, \, j)} = 0 \text{ if } i + 1 = j, \qquad \b{y}_{(0,n+1)} = 0, \qquad\text{and} \\ \b{y}_{(i, \, j+1)} \! + \b{y}_{(i-1, \, j)} \! - \b{y}_{(i-1, \, j+1)} \! - \b{y}_{(i, \, j)} \! = \b{u}_{(i, \, j)} \text{ for all } (i,j) \in \binom{[n]}{2} \end{array}}.
\]
(The map is given by~$\b{y}_{(0, \, j)} = \b{z}_{(1, \, j-1, \, j, \, 0)}$, $\b{y}_{(i, \, n+1)} = \b{z}_{(0, \, i, \, i+1, \, 1)}$ and~$\b{y}_{(i, \, j)} = \b{z}_{(\ell, \, i, \, j, \, r)}$ for any~$\ell, r \in \{0,1\}$.)
\item when~$\decoration = \upDownCirc^n$, we have
\[
\ldij{-} = \ldij{+} = i
\qquad\text{and}\qquad
\rdij{-} = \rdij{+} = j,
\]
so that the polytope~$Q_\decoration(\b{u})$ is equivalent to the following \defn{kinematic cube}:
\[
\qquad
\set{\b{y} \in \R^{\{0,1\} \times [n-1]}\!}{\b{y} \ge 0 \quad\text{and}\quad \b{y}_{(0, \, i)} \! + \b{y}_{(1, \, i)} \! = \b{u}_{(i, \, i+1)} \text{ for all } i \in [n-1]}
\]
(The map is given by~$\b{y}_{(0, \, i)} = \b{z}_{(0, \, i, \, i+1, \, 1)}$ and~$\b{y}_{(1, \, i)} = \b{z}_{(1, \, i, \, i+1, \, 0)}$.)
\end{itemize}
\end{example}

\begin{proof}[Proof of \cref{coro:kinematicSpace}]
According to \cref{coro:numberRaysPermutreeFanNoI,coro:numberExtremalExchangeablePairsPermutreeFanNoI}, we can parametrize 
\begin{enumerate}
\item the rays of the fan~$\Fan_\decoration$ by~$\Rd$ where we identify~$(0, p, q, 0)$, $(1, p, q, 0)$, $(0, p, q, 1)$ and $(1, p, q, 1)$ when~$p+1 \ne q$, and ignore~$(0, p, q, 0)$, $(1, p, q, 0)$, $(0, p, q, 1)$ and $(1, p, q, 1)$ for~$(p,q) \notin \Fd$.
\item the facets of its type cone~$\ctypeCone(\Fan_\decoration)$ by~$\Fd$.
\end{enumerate}
To be more precise, with any~$(\ell, p, q, r) \in \Rd$, we associate the subset~$R_{(\ell, p, q, x)}$ of~$[n]$ defined as follows:
\begin{itemize}
\item if~$(p, q) \notin \Fd$ then~$R_{(0, p, q, 0)} \eqdef R_{(1, p, q, 0)} \eqdef R_{(0, p, q, 1)} \eqdef \varnothing$ while~$R_{(1, p, q, 1)} \eqdef [n]$,
\item if~$p + 1 = q$, then~$R_{(0, p, q, 0)} \eqdef \varnothing$, $R_{(1, p, q, 0)} \eqdef [1,p]$, $R_{(0, p, q, 1)} \eqdef [q,n]$ and $R_{(1,p,q,1)} \eqdef [n]$,
\item if~$(p,q) \in \Fd$ and~$p+1 \ne q$, then independently of the values of~$\ell$ and~$r$, the set~$R_{(\ell, p, q, r)}$ is the unique proper subset~$\varnothing \ne R \subsetneq [n]$ which fulfills the conditions of \cref{prop:raysPermutreeFan} and with the property that~$i$ is the last position (resp.~$j$ is the first position) such that~$1, \dots, i$ (resp.~$j, \dots, n$) all belong to~$R$ or all belong to~$[n] \ssm R$, (see the proof of \cref{coro:numberRaysPermutreeFan} for the uniqueness~of~$R$).
\end{itemize}
Now with any~$(i,j) \in \Fd$, we associate the unique pair~$(I_{i,j}, J_{i,j})$ of proper subsets satisfying the conditions of \cref{prop:raysPermutreeFan,prop:extremalExchangeablePairsPermutreeFan} and such that~${\max(I_{i,j} \ssm J_{i,j}) = i}$ and ${\min(J_{i,j} \ssm I_{i,j}) = j}$ (see \cref{rem:extremalExchangeablePairsPermutreeFan} to argue that such a pair is unique).
We claim that these sets are given by
\[
I_{i,j} = R_{(1, \, \ldij{+}, \, \rdij{-}, \, 0)}
\qquad\text{and}\qquad
J_{i,j} = R_{(0, \, \ldij{-}, \, \rdij{+}, \, 1)},
\]
and their intersection and union  are given by
\[
I_{i,j} \cap J_{i,j} = R_{(i \notin \decoration^-, \, \ldij[i][j+1]{-}, \, \rdij[i-1][j]{-}, \, j \notin \decoration^-)}
\qquad\text{and}\qquad
I_{i,j} \cup J_{i,j} = R_{(i \in \decoration^+, \, \ldij[i][j+1]{+}, \, \rdij[i-1][j]{+}, \, j \in \decoration^+)},
\]
which implies the result by \cref{prop:wallCrossingInequalitiesPermutreeFan,prop:simplicialTypeCone}.

\bigskip
To show this claim, let us first prove that~$I_{i,j} = R_{(1, \, \ldij{+}, \rdij{-}, 0)}$.
Recall from \cref{prop:extremalExchangeablePairsPermutreeFan}\,(iii) that ${]i,j[} \cap \decoration^- \subseteq I_{i,j}$ and ${]i,j[} \cap \decoration^+ \subseteq [n] \ssm I_{i,j}$.
Moreover, we obtain by \cref{rem:extremalExchangeablePairsPermutreeFan} that
\begin{itemize}
\item if~$i \in \decoration^+$, then~${[1,i]} \subseteq I_{i,j}$ and the last position~$p$ for which $1, \dots, p$ all belong to~$I_{i,j}$ is the position just before the first element in~${]i,j[} \cap \decoration^+$, or~$j$ if there is no such element,
\item otherwise,~${[1,i[} \subseteq [n] \ssm I_{i,j}$ while~$i \in I_{i,j}$, so that the last position~$p$ for which $1, \dots, p$ all belong to~$[n] \ssm I_{i,j}$ is~$i-1$.
\end{itemize}
This shows that~$\ldij{+}$ indeed gives the last position~$p$ for which~$1, \dots, p$ all belong to~$I_{i,j}$ or all belong to~$[n] \ssm I_{i,j}$.
A symmetric argument shows that~$\rdij{-}$ gives the first position~$q$ for which~$q, \dots, n$ all belong to~$I_{i,j}$ or all belong to~$[n] \ssm I_{i,j}$.
This ensures that~$1 \le \ldij{+} < \rdij{-} \le n$.
Moreover, we have~$\decoration_k \ne \upDownCirc$ for~$\ldij{+} < k < \rdij{-}$, since~$i-1 \le \ldij{+}$ with equality only when~$i \notin \decoration^+$ and~$\rdij{-} \le j+1$ with equality only when~$j \notin \decoration^-$.
Thus, $(\ldij{+}, \rdij{-}) \in \Fd$.

Observe now that if~$i \notin \decoration^+$, then~$\ldij{+} = i-1$ while~$\rdij{-} \ge i+1$ so that~$\ldij{+} + 1 < \rdij{-}$.
We therefore obtain that
\begin{itemize}
\item if~$\ldij{+} + 1 = \rdij{-}$, then~$i \in \decoration^+$ so that~$[1,i] \subseteq I_{i,j}$ and thus~$I_{i,j} = [1,\ldij{+}] = R_{(1, \, \ldij{+}, \, \rdij{-}, \, 0)}$,
\item if~$\ldij{+} + 1 < \rdij{-}$, then $I_{i,j} = R_{(1, \, \ldij{+}, \, \rdij{-}, \, 0)}$ as they are both fully determined by~$\ldij{+}$ and~$\rdij{-}$.
\end{itemize}
This concludes our proof of~$I_{i,j} \! = \! R_{(1, \, \ldij{+}, \, \rdij{-}, \, 0)}$.
A symmetric argument shows~$J_{i,j} \! = \! R_{(0, \, \ldij{-}, \, \rdij{+}, \, 1)}$.

\bigskip
For the intersection, note first that if~$I_{i,j} \cap J_{i,j} = \varnothing$, then~$i,j \in \decoration^-$ by \cref{rem:extremalExchangeablePairsPermutreeFan} while ${{]i,j[} \subseteq \decoration^+}$ by \cref{prop:extremalExchangeablePairsPermutreeFan}\,(iii). Therefore, $\ldij[i][j+1]{-} = j-1$ and~$\rdij[i-1][j]{-} = i+1$, which implies that~$(\ldij[i][j+1]{-} \not< \rdij[i-1][j]{-}) \notin \Fd$ or~$\ldij[i][j+1]{-} + 1 = \rdij[i-1][j]{-}$.
Since~$i,j \in \decoration^-$, this yields in both situations that~$R_{(i \notin \decoration^-, \, \ldij[i][j+1]{-}, \, \rdij[i-1][j]{-}, \, j \notin \decoration^-)} = \varnothing = I_{i,j} \cap J_{i,j}$.

Assume now that~$I_{i,j} \cap J_{i,j} \ne \varnothing$.
We then obtain by \cref{rem:extremalExchangeablePairsPermutreeFan} that
\begin{itemize}
\item if~$i \in \decoration^+$, then~${[1,i]} \subseteq [n] \ssm (I_{i,j} \cap J_{i,j})$ and the last position~$p$ for which $1, \dots, p$ all belong to~$[n] \ssm (I_{i,j} \cap J_{i,j})$ is the position just before the first element in~${]i,j[} \cap \decoration^-$, or~$j+1$ if there is no such element,
\item otherwise,~${[1,i[} \subseteq I_{i,j} \cap J_{i,j}$ while~$i \notin I_{i,j} \cap J_{i,j}$, so that the last position~$p$ for which $1, \dots, p$ all belong to~$I_{i,j} \cap J_{i,j}$ is~$i-1$.
\end{itemize}
This shows that~$\ldij[i][j+1]{-}$ indeed gives the last position~$p$ for which~$1, \dots, p$ belongs all to~$I_{i,j} \cap J_{i,j}$ or all to~$[n] \ssm (I_{i,j} \cap J_{i,j})$.
A symmetric argument shows that~$\rdij[i-1][j]{-}$ gives the first position~$q$ for which~$q, \dots, n$ belongs all to~$I_{i,j} \cap J_{i,j}$ or all to~$[n] \ssm (I_{i,j} \cap J_{i,j})$.
As above, this ensures that~$(\ldij[i][j+1]{-}, \rdij[i-1][j]{-}) \in \Fd$.

We now distinguish three cases:
\begin{itemize}
\item if~$\ldij[i][j+1]{-} + 1 = \rdij[i-1][j]{-}$ and~$[1,i[ \subseteq I_{i,j} \cap J_{i,j}$, then~$i \in \decoration^+$ and~$j \in \decoration^-$ by \cref{rem:extremalExchangeablePairsPermutreeFan}, so that we get~$I_{i,j} \cap J_{i,j} = [1, \ldij[i][j+1]{-}] = R_{(0, \, \ldij[i][j+1]{-}, \, \rdij[i-1][j]{-}, \, 1)} = R_{(i \notin \decoration^-, \, \ldij[i][j+1]{-}, \, \rdij[i-1][j]{-}, \, j \notin \decoration^-)}$.
\item if~$\ldij[i][j+1]{-} + 1 = \rdij[i-1][j]{-}$ and~$[1,i] \subseteq [n] \ssm (I_{i,j} \cap J_{i,j})$, then~$i \in \decoration^-$ and~$j \in \decoration^+$ by \cref{rem:extremalExchangeablePairsPermutreeFan}, so that we get~$I_{i,j} \cap J_{i,j} = [\rdij[i-1][j]{-}, n] = R_{(1, \, \ldij[i][j+1]{-}, \, \rdij[i-1][j]{-}, \, 0)} = R_{(i \notin \decoration^-, \, \ldij[i][j+1]{-}, \, \rdij[i-1][j]{-}, \, j \notin \decoration^-)}$.
\item if~$\ldij[i][j+1]{-} + 1 < \rdij[i-1][j]{-}$, then $I_{i,j} \cap J_{i,j} = R_{(i \notin \decoration^-, \, \ldij[i][j+1]{-}, \, \rdij[i-1][j]{-}, \, j \notin \decoration^-)}$ as they are both fully determined by~$\ldij[i][j+1]{-}$ and~$\rdij[i-1][j]{-}$.
\end{itemize}
This concludes the proof for the intersection~$I_{i,j} \cap J_{i,j} = R_{(i \notin \decoration^-, \, \ldij[i][j+1]{-}, \, \rdij[i-1][j]{-}, \, j \notin \decoration^-)}$.
The proof for the union $I_{i,j} \cup J_{i,j} = R_{(i \in \decoration^+, \, \ldij[i][j+1]{+}, \, \rdij[i-1][j]{+}, \, j \in \decoration^+)}$ is identical.
\end{proof}

%%%%%%%%%%%%%%%%%%%%%%%%%%%%%%%%%%%%%%

\section*{Acknowledgments}

We are grateful to J.-C. Novelli for multiple discussions and suggestions, to N.~Reading for bibliographic inputs, to A.~Padrol for comments on a preliminary version, and to an anonymous referee for various suggestions on the presentation of this paper.

%%%%%%%%%%%%%%%%%%%%%%%%%%%%%%%%%%%%%%

\bibliographystyle{alpha}
\bibliography{quotientopesPermutreehedraRemovahedra}
\label{sec:biblio}

\end{document}